\tikzset{%
  symbol/.style={
    draw=none,
    every to/.append style={
      edge node={node [sloped, allow upside down, auto=false]{$#1$}}
    },
  },
}
\newtheorem{theorem}{Theorem}[section]
\newtheorem{lemma}[theorem]{Lemma}
\newtheorem{cor}[theorem]{Corollary}
\theoremstyle{definition}
\newtheorem{definition}[theorem]{Definition}
\newtheorem{remark}[theorem]{Remark}
\numberwithin{equation}{subsection}
\def\reflgood{\mathop{\mathrm{refl-good}}\nolimits}
\def\refllarge{\mathop{\mathrm{refl-large}}\nolimits}
\def\good{\mathop{\mathrm{good}}\nolimits}
\def\Hsplit{\mathop{\mathrm{H-split}}\nolimits}
\def\split{\mathop{\mathrm{split}}\nolimits}
\def\lim{\mathop{\mathrm{lim}}\nolimits}
\def\Spec{\mathop{\mathrm{Spec}}}
\def\Hom{\mathop{\mathrm{Hom}}\nolimits}
\def\gl{\mathop{\mathrm{Gl}}\nolimits}
\def\pr{\mathop{\mathrm{pr}}\nolimits}
\def\supp{\mathop{\mathrm{Supp}}\nolimits}
\def\id{\mathop{\mathrm{id}}\nolimits}
\def\supp{\mathop{\mathrm{supp}}\nolimits}
\def\deg{\mathop{\mathrm{deg}}\nolimits}
\def\det{\mathop{\mathrm{det}}\nolimits}
\def\supp{\mathop{\mathrm{supp}}\nolimits}
\def\dim{\mathop{\mathrm{dim}}\nolimits}
\def\rk{\mathop{\mathrm{rk}}\nolimits}
\def\lim{\mathop{\mathrm{lim}}\nolimits}
\def\et{\mathop{\text{\'et}}\nolimits}
\def\Aut{\mathop{\mathrm{Aut}}\nolimits}
\def\im{\mathop{\mathrm{im}}\nolimits}
\def\tr{\mathop{\mathrm{tr}}\nolimits}
\def\refl{\mathop{\mathrm{Refl}}\nolimits}
\def\coh{\mathop{\mathrm{Coh}}\nolimits}
\def\id{\mathop{\mathrm{id}}\nolimits}
\def\diag{\Delta}
\def\gal{\mathop{\mathrm{Gal}}\nolimits}
\def\Sym{\mathop{\mathrm{Sym}}\nolimits}
\def\FGss{\mathop{F\mathrm{-G-ss}}\nolimits}
\def\l@subsection{\@tocline{2}{0pt}{1pc}{5pc}{}} \def\l@subsection{\@tocline{2}{0pt}{2pc}{6pc}{}} 
\title[Stratifying Higgs moduli and the Hitchin morphism]{Stratifying moduli spaces of Higgs bundles and the Hitchin morphism}
\author{Aryaman Patel}
\address{(A. Patel) AG Lazi\'c, Universit\"at des Saarlandes, 66123 Saarbr\"ucken, Germany}
\email{\href{mailto:aryaman.patel@math.uni-sb.de}{aryaman.patel@math.uni-sb.de}}
\author{Dario Weißmann}
\address{(D. Wei{\ss}mann) Instytut Matematyczny Polskiej Akademii Nauk, ul. Śniadeckich 8, 00-656 Warszawa, Poland}
\email{\href{mailto: dario.weissmann@posteo.de}{dario.weissmann@posteo.de}}
\date{\today}
\newcommand{\darionew}[1]{{\color{violet} #1}}
\begin{document}

\begin{abstract}
We study the behavior of slope-stability of reflexive twisted
sheaves over a normal projective variety $X$
under pullback along a cover.
Slope-stability is always preserved 
if the cover does not factor via a quasi-\'etale cover.
Fixing the rank,
there is one quasi-\'etale cover that checks whether
a twisted sheaf remains slope-stable on all Galois covers, 
yielding a stratification of the moduli space of slope-stable
Higgs-bundles.
As an application, we determine the image of the Hitchin morphism restricted to the 
smallest closed stratum of the Dolbeault moduli space when
$X$ is smooth.
This allows us to
determine the image of the Hitchin morphism from the Dolbeault moduli space when
$X$ is a hyperelliptic or abelian variety in characteristic $p\ge0$. In particular, we show that Chen-Ng\^o's conjecture holds for hyperelliptic varieties in characteristic $0$. 
\end{abstract}

\maketitle

\section{Introduction}
Stability of vector bundles is a fundamental property
used in the construction of their moduli spaces.
In recent years, the question of when the pullback
of a slope-stable vector bundle remains slope-stable has been raised. 
For normal projective varieties, this was answered in \cite{bdp}:
a finite separable morphism $Y\to X$ preserves the
slope-stability of vector bundles if $Y\to X$ is genuinely ramified, 
that is, it does not factor via a non-trivial \'etale cover of $X$.
In general, pullback along a finite \'etale cover does not preserve stability.
However, for a fixed rank, the property to remain stable on \emph{all}
\'etale covers is an open property in characteristic $0$, see \cite{fun}.
In positive characteristic $p$\darionew{,}
one has to exclude the case that $p$ divides the degree
of \'etale Galois covers in the discussion.

In this paper, we answer an analogous question for stable reflexive sheaves $\mathcal{V}$
under reflexive pullback along a finite separable morphism $Y\to X$ 
of normal projective varieties.
We also add a twisted endomorphism $\theta: \mathcal{V}\to \mathcal{V}\otimes F$,
where $F$ is a vector bundle, to the discussion.
The results follow a similar structure as in the vector bundle case,
but this time quasi-\'etale covers play the role of \'etale covers, 
that is, finite separable covers that are \'etale over an open subvariety whose complement has codimension at least $2$.
In the case that $X$ is smooth and $\theta=0$,
pullbacks of stable reflexive sheaves were also independently considered in 
\cite[Section 3.1]{f-divded-normal-langer}.

\begin{theorem}[Theorem \ref{thm:bdp analogue stable}]\label{main1}
    Let $\pi:Y\to X$ be a
    finite separable
    morphism
    of normal projective varieties that does not factor through a non-trivial
    finite \'etale morphism, and let $F$ be a vector bundle on $X$.
    
    Then an $F$-twisted vector bundle $(\mathcal{V},\theta)$ on $X$ is slope-stable 
    iff $(\pi^{\ast}\mathcal{V},\pi^{\ast}\theta)$ is slope-stable
    as a $\pi^{\ast}F$-twisted bundle on $Y$.
   
    Furthermore, if $\pi$ does not factor through a non-trivial finite 
    quasi-\'etale morphism,
    then an $F$-twisted reflexive sheaf $(\mathcal{V},\theta)$ on $X$ 
    is slope-stable if and only if $(\pi^{[\ast]}\mathcal{V},\pi^{[\ast]}\theta)$ 
    is slope-stable as a $\pi^{\ast}F$-twisted reflexive sheaf on $Y$,
    where $(-)^{[*]}$ denotes reflexive pullback. 
\end{theorem}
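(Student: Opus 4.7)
The plan is to prove both parts via the contrapositive of the hard direction, reducing the reflexive statement to the locally free one on a big open subset. In both parts the easy direction is immediate: if $\mathcal{W} \subset \mathcal{V}$ is a destabilizing sub-twisted subsheaf, its pullback $\pi^{\ast}\mathcal{W}$ (respectively reflexive pullback $\pi^{[\ast]}\mathcal{W}$) is a sub-twisted subsheaf of $\pi^{\ast}\mathcal{V}$ (respectively $\pi^{[\ast]}\mathcal{V}$) of normalized slope equal to that of $\mathcal{W}$, since with respect to the pulled-back polarization $\pi^{\ast}H$ slopes scale uniformly by $\deg(\pi)$ and the twisted structure is preserved by functoriality.

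For the non-trivial direction in the vector-bundle case, I would assume $(\mathcal{V},\theta)$ is slope-stable and suppose for contradiction that $(\pi^{\ast}\mathcal{V},\pi^{\ast}\theta)$ admits a destabilizing sub-twisted subsheaf. Following the strategy of \cite{bdp}, I pass to the Galois closure $\tilde{\pi}\colon \tilde{Y} \to X$ with group $G$, so that the further pullback of the destabilizer remains sub-twisted and destabilizing on $\tilde{Y}$ while $G$ acts on the set of such subsheaves. A canonical construction---for instance, the maximal destabilizing sub-twisted subsheaf followed by the sum of its $G$-translates---produces a $G$-equivariant destabilizing sub-twisted subsheaf. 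If this descends via $G$-invariants to a sub-twisted subsheaf $\mathcal{U} \subset \mathcal{V}$, stability of $(\mathcal{V},\theta)$ gives a direct contradiction; otherwise, the stabilizer in $G$ of the destabilizer cuts out an intermediate cover $\tilde{Y} \to Z \to X$ with $Z \to X$ \'etale and non-trivial, contradicting the hypothesis on $\pi$. The twisted structure causes no new complication because $\theta$ is compatible with pullback and the operations of sum, intersection, and descent preserve $\theta$-stability of subsheaves.

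For the reflexive case, I restrict to a big open $X^{\circ} \subset X$ on which $X$ is smooth, $\mathcal{V}$ and $F$ are locally free, and $\pi$ restricts to a finite flat morphism $\pi^{\circ}\colon Y^{\circ} \to X^{\circ}$, with complements of codimension $\geq 2$ in both $X$ and $Y$. On $X^{\circ}$ the reflexive pullback $\pi^{[\ast]}\mathcal{V}$ coincides with $\pi^{\circ,\ast}(\mathcal{V}|_{X^{\circ}})$, and slopes with respect to $H$ are computed the same way on $X$ and $X^{\circ}$ by the codimension assumption. It then suffices to verify that $\pi^{\circ}$ does not factor through a non-trivial \'etale cover of $X^{\circ}$: any factorization $Y^{\circ} \to Z^{\circ} \to X^{\circ}$ with $Z^{\circ} \to X^{\circ}$ \'etale extends, by normalization of $X$ in the function field of $Z^{\circ}$, to a non-trivial quasi-\'etale cover $Z \to X$ through which $\pi$ factors, contradicting the hypothesis. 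The vector-bundle case applied to $\pi^{\circ}$ then yields slope-stability of $\pi^{[\ast]}\mathcal{V}$ on $X^{\circ}$, hence on $X$.

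The main obstacle I anticipate is the canonical-destabilizer step in the Galois-closure argument. For ordinary bundles this relies on the Harder-Narasimhan formalism; for $F$-twisted sheaves one must verify that sub-twisted subsheaves of fixed slope form a lattice closed under the relevant operations, so that a $G$-invariant maximal destabilizer genuinely exists and behaves well under descent. A secondary technicality is the extension of an \'etale cover of a big open $X^{\circ}$ to a quasi-\'etale cover of $X$: this relies on the normality of $X$ so that normalization in the appropriate function field produces a finite cover without acquiring new ramification in codimension one.
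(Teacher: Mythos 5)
Your easy direction is fine, but the hard direction contains a genuine gap at the exact point where the theorem's content lives. After passing to the Galois closure $\tilde Y\to X$, the pullback of a stable $F$-twisted sheaf is always semistable, and in fact polystable (Lemmas \ref{lemma-pullback-stable} and \ref{lemma-key}): it decomposes as $\bigoplus_{i=1}^n\mathcal{W}_i^{\oplus e}$ with the Galois group acting transitively on the isomorphism classes of the stable summands. Consequently your ``canonical destabilizer'' does not exist: the maximal destabilizing subsheaf in the Harder--Narasimhan sense is the whole sheaf, the socle is the whole sheaf, and the sum of the $G$-translates of any single stable summand is again the whole sheaf. There is no canonical $G$-equivariant \emph{proper} subsheaf of the same slope to descend. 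Your fallback branch is also unjustified on two counts: (i) the intermediate cover $\tilde Y/\mathrm{Stab}(\mathcal{W}_1)\to X$ cut out by the stabilizer of a summand has no a priori reason to be \'etale (or quasi-\'etale) --- proving that failure of stability under pullback forces a factorization through such a cover \emph{is} the theorem, not an input to it; and (ii) in the case $n=1$, $e\ge2$ (i.e.\ $\mathcal{V}_{\mid\tilde Y}\cong\mathcal{W}^{\oplus e}$ with a single stable summand of multiplicity $e\ge 2$) the stabilizer is all of $G$ and no intermediate cover is produced, so your dichotomy yields no contradiction at all. The paper's route is entirely different: it first proves a structural result on $\pi^{\ast}\bigl((\pi_{\ast}\mathcal{O}_Y)/\mathcal{O}_X\bigr)$ modulo torsion (Lemma \ref{lemma-mod-torsion}, using the connectedness in codimension $1$ of $Y\times_X Y$ from Lemma \ref{lemma-connecting-group}), deduces that $\Hom_{F\text{-}TS}(\mathcal{V}_1,\mathcal{V}_2)\to\Hom_{F_{\mid Y}\text{-}TS}(\mathcal{V}_{1\mid Y},\mathcal{V}_{2\mid Y})$ is an isomorphism (Lemma \ref{lemma-homs}), and concludes that $\End(\mathcal{V}_{\mid Y})=k$, which forces the polystable pullback to have a single stable summand of multiplicity one.

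Two secondary problems. First, your reduction of the reflexive case to the locally free case on a big open $X^{\circ}$ does not literally apply the vector-bundle statement, since $X^{\circ}$ is not projective; the paper instead runs the Hom computation directly for reflexive sheaves on big opens, and the hypothesis ``genuinely ramified in codimension $1$'' is used precisely to guarantee that the cokernels $\mathcal{O}_Y/\mathcal{L}_i$ in Lemma \ref{lemma-mod-torsion} are supported in codimension $1$, which is what makes the strict slope inequality $\mu(\mathcal{V}_{1\mid Y})<\mu(\mathcal{V}_{2\mid Y}\otimes\mathcal{L}_i)$ available for reflexive sheaves (for bundles the weaker hypothesis suffices via simplicity of stable summands). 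Your reduction would erase this distinction between the two halves of the statement. Second, in the non-Galois case the Galois closure of a cover genuinely ramified in codimension $1$ need not itself be genuinely ramified in codimension $1$; the paper's proof of Theorem \ref{thm:bdp analogue stable} handles this by factoring the Galois closure as $Z\to X'\to X$ with $X'\to X$ quasi-\'etale Galois and running a separate orbit argument over $X'$, a step absent from your sketch.
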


Thus, to understand slope-stability under reflexive pullback by a general finite
separable morphism, 
it remains to study quasi-\'etale covers.
Fixing the rank of the reflexive sheaves, we obtain
that it suffices to check slope-stability on one quasi-\'etale
cover instead of all quasi-\'etale covers: 

\begin{theorem}[Theorem \ref{thm:good cover}, char $0$]\label{main2}
    Let $X$ be a normal projective variety, let $r\geq 2$ be an integer and let $F$ be a vector bundle on $X$. 
    
    Then there exists a quasi-\'etale Galois cover
    $\pi_{r,\reflgood}:X_{r,\reflgood}\to X$ called the \emph{good cover}, such that any slope-stable $F$-twisted 
    reflexive sheaf $\mathcal{V}$ of rank $r$ on $X$ remains slope-stable after pullback to all covers $Y\to X$ 
    if and only if 
    $\pi_{r,\reflgood}^{[\ast]}\mathcal{V}$ is slope-stable as an 
    $\pi_{r,\reflgood}^\ast F$-twisted reflexive sheaf.
\end{theorem}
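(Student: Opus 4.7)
The plan is to reduce to quasi-\'etale Galois covers via Theorem \ref{main1}, use polystability plus Galois descent to bound the degree of a cover witnessing destabilization, and then appeal to topological finite generation of the quasi-\'etale fundamental group in characteristic $0$ to define $X_{r,\reflgood}$. By Theorem \ref{main1}, any finite separable cover $Y\to X$ factors through a maximal intermediate quasi-\'etale cover above which slope-stability is automatically preserved; hence $(\mathcal{V},\theta)$ remains stable on all covers iff it remains stable on all quasi-\'etale covers, equivalently on all quasi-\'etale Galois covers (by taking Galois closures).

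The key work is to produce a uniform bound $N(r)$ on the degree of a quasi-\'etale cover that witnesses destabilization. Suppose $(\mathcal{V},\theta)$ is slope-stable on $X$ but $(\pi^{[\ast]}\mathcal{V},\pi^{[\ast]}\theta)$ is not slope-stable on a quasi-\'etale Galois cover $\pi:Y\to X$ with group $G$. Since $\mathcal{V}$ is a direct summand of $\pi_\ast\pi^{[\ast]}\mathcal{V}$ via the trace map, any destabilizing subquotient of $\pi^{[\ast]}\mathcal{V}$ would descend to contradict stability of $\mathcal{V}$; consequently $\pi^{[\ast]}\mathcal{V}$ is $F$-twisted slope-polystable. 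Decomposing $\pi^{[\ast]}\mathcal{V}=\bigoplus_i\mathcal{W}_i^{\oplus m_i}$ into isotypic components for the $G$-action in the category of $F$-twisted reflexive sheaves, stability of $\mathcal{V}$ forces $G$ to permute the isotypic components transitively and, via the stabilizer, to act irreducibly on each multiplicity space. If there are $k\ge 2$ isotypic components, then $k\le r$, and Galois-equivariant descent of $\mathcal{W}_1^{\oplus m_1}$ along $Y\to Y/G_{\mathcal{W}_1}\to X$ gives a proper $\theta$-invariant destabilizing reflexive subsheaf on a quasi-\'etale cover of degree at most $r$. In the purely isotypic case $\pi^{[\ast]}\mathcal{V}=\mathcal{W}^{\oplus m}$ with $m\ge 2$, the image of $G$ in $\Aut(\mathcal{W}^{\oplus m})$ acts irreducibly on the multiplicity space $\mathbb{C}^m$, and by Jordan's theorem contains a normal abelian subgroup $A$ of index $J(m)\le J(r)$; the $A$-action on $\mathbb{C}^m$ is then reducible, so descent along $Y\to Y/A\to X$ produces a proper destabilizing subsheaf on a quasi-\'etale cover of degree at most $J(r)$. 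Setting $N(r):=\max(r,J(r))$ completes the bound.

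Finally, in characteristic $0$, the quasi-\'etale fundamental group $\pi_1^{\et}(X^{\mathrm{reg}})$ is topologically finitely generated --- via Hironaka's resolution of singularities and finite generation of the topological $\pi_1$ of the complement of a normal crossing divisor in a smooth projective variety --- so it has only finitely many open subgroups of index at most $N(r)$. Taking $K$ to be their intersection (normal, open, of finite index) and defining $\pi_{r,\reflgood}:X_{r,\reflgood}\to X$ as the corresponding quasi-\'etale Galois cover, $X_{r,\reflgood}$ dominates every quasi-\'etale cover of degree at most $N(r)$; since destabilization is preserved by further reflexive pullback along quasi-\'etale morphisms (slopes scale uniformly and reflexive subsheaf inclusions remain injective, as ramification has codimension $\ge 2$), both directions of the equivalence follow. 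I expect the main technical obstacle to be the Galois descent analysis in the twisted reflexive setting --- ensuring that $G$-equivariant $\theta$-invariant summands in the polystable decomposition descend to proper $\theta$-invariant reflexive subsheaves on the intermediate covers with the expected slope --- together with setting up Jordan's bound correctly in the purely isotypic case and confirming that the isotypic decomposition itself respects the $F$-twisted $\theta$-structure.
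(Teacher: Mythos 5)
Your overall architecture matches the paper's: reduce to quasi-\'etale Galois covers via \Cref{thm:bdp analogue stable}, use polystability of the reflexive pullback and the transitive Galois action on stable summands, handle the case of $\ge 2$ isotypic components by descent to an intermediate cover of degree $\le r$ (this is \Cref{lemma:n at least 2} and \Cref{lemma:big cover}), and attack the purely isotypic case $\mathcal{V}_{\mid Y}\cong\mathcal{W}^{\oplus m}$ with Jordan's theorem, finally invoking finiteness of quasi-\'etale covers of bounded degree (\Cref{cor: quasi-etale of fixed degree}) to build the good cover. However, there is a genuine gap at the step you yourself flag as the "main technical obstacle": the Galois group $G$ does \emph{not} act linearly on the multiplicity space $k^m$. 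The linearization $\varphi_\sigma:\sigma^{\ast}\mathcal{V}_{\mid Y}\to\mathcal{V}_{\mid Y}$ only yields matrices in $\mathrm{GL}_m$ after choosing isomorphisms $\psi_\sigma:\mathcal{W}\to\sigma^{\ast}\mathcal{W}$, and these choices fail to satisfy the cocycle condition by scalars $\lambda_{\sigma,\tau}\in k^{\ast}$; one obtains only a projective representation of $G$, i.e.\ a class in $H^2(G,\mu_r)$. Jordan's theorem applies to finite \emph{linear} groups, so your normal abelian subgroup $A$ lives in a central extension $\tilde{G}$ of $G$, not in $G$ itself, and the quotient "$Y/A$" does not exist as an intermediate cover of $Y\to X$. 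Consequently your bound $N(r)=\max(r,J(r))$ is not justified; the paper's bound is $rJ(r)$, where the extra factor of $r$ is the price of linearizing the projective representation.

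Resolving this is where essentially all of the paper's work goes: one first arranges (via a cyclic cover $X'\to X$ genuinely ramified in codimension $1$, built in \Cref{example: cyclic cover genuinely ramified in codimension 1}, and a delicate argument over a big open) that $\det(\mathcal{W})$ descends, which pins the cocycle $\lambda$ down to values in $\mu_r$; one then restricts to a general complete intersection curve and uses \cite[Lemma 3.5]{fun} to kill $\lambda$ on a further \emph{cyclic cover of degree dividing $r$} that is central in the new Galois group. Only then does one have an honest representation $\rho$ to which Jordan's theorem applies, and the intermediate cover $Y''\to X$ it produces has degree at most $rJ(r)$. Your argument as written skips this entire linearization step, so the purely isotypic case --- which is the only genuinely hard case --- is not established. (Two minor further points: your trace-splitting argument for polystability of $\pi^{[\ast]}\mathcal{V}$ is fine in characteristic $0$, and your route to topological finite generation of $\pi_1^{\et}(X_{sm})$ via resolution also works in characteristic $0$, though the paper's Bertini argument is characteristic-free.)
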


Given an \'etale Galois cover $\pi:Y\to X$ and a vector bundle $F$ on $X$,
any stable $F$-twisted bundle $\mathcal{V}$ of rank $\ge2$ on $X$ 
decomposes as a direct sum of stable $F_{\mid_Y}$-twisted bundles on $Y$
such that the Galois group of $\pi$ acts transitively on the isomorphism classes 
of the stable direct summands.
By transitivity, all 
stable direct summands have the same rank, 
called the \emph{decomposition type} of $\mathcal{V}$ with respect to $\pi$ 
(Definition \ref{dec type}). 

Iterating the good cover, we obtain a cover $\pi_{r,\split}:X_{r,\mathrm{split}}\to X$ 
called the \emph{split cover}, such that the decomposition type remains unchanged 
with respect to all covers dominating the split cover.
In particular, we obtain a stratification of the moduli spaces $M^{P,F-s}_X$ 
of slope-stable $F$-twisted Higgs bundles with fixed Hilbert polynomial $P$,
with respect to the decomposition type.
This stratification is independent of the choice of cover 
for all covers that dominate the split cover $X_{r,\mathrm{split}}\to X$. 

We denote by $M^{P,\FGss}_X$ the moduli space of Gieseker semistable
torsion-free Higgs sheaves on $X$ with Hilbert polynomial $P$ 
and consider the
Hitchin morphism $h^r:M^{P,\FGss}_X\to A^{r,F}_X$ which assigns to
an $F$-twisted Higgs sheaf $(\mathcal{V},\theta)$ 
the characteristic polynomial of $\theta$,
which is a point in the affine space
$A^{r,F}_X:=\bigoplus_{i=1}^rH^0(X,\mathrm{Sym}^iF)$,
called the \emph{Hitchin base}. 

Given an \'etale cover $Y\to X$, we denote by $(1)^{\oplus r}_{Y\to X}$ the subset of the Hitchin base consisting of polynomials that decompose completely, i.e., as a product of $r$ degree $1$ polynomials, after pullback to $Y$.
Let $Z^{\FGss,P}(1)$ denote the subset of $F$-twisted Higgs bundles in
$M^{P,\FGss}_X$ that split as a direct sum of $\pi^*_{r,\split}F$-Higgs bundles of rank $1$ after pullback to the split cover $X_{r,\split}$, which we call the \emph{semistable $1$-stratum} of $M^{P,\FGss}_X$. Let $P_0$ denote the Hilbert polynomial of the trivial bundle of rank $r$. Then these strata are related as follows.

\begin{theorem}[Lemma \ref{H-split} + Theorem \ref{1-stratum image}, char $0$]\label{1stratumimage}
Let $X$ be a normal projective variety and let $F$ be a vector bundle on $X$.
Then there exists an \'etale Galois cover
$X_{r,\mathrm{H-split}}\to X$, such that for every \'etale
Galois cover $Y\to X$ dominating $X_{r,\mathrm{H-split}}\to X$, we have
$(1)^{\oplus r}_{Y\to X}=(1)^{\oplus r}_{X_{r,\mathrm{H-split}}\to X}$.

Moreover, if $X$ is smooth, then the set-theoretic image
of the semistable $1$-stratum
$Z^{\FGss,P_0}(1)\subset M_X^{P_0,\FGss}$ is the subset
$(1)^{\oplus r}_{X_{r,\Hsplit}\to X}\subset A^{r,F}_X$
via the Hitchin morphism.
\end{theorem}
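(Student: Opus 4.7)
The plan is to construct $X_{r,\Hsplit}$ by a Noetherianity argument, and then to analyze the Hitchin image by combining the splitting data on the quasi-\'etale split cover $X_{r,\split}$ (from Theorem \ref{main2}) with an \'etale descent step. For each finite \'etale Galois cover $Y\to X$, the subset $(1)^{\oplus r}_{Y\to X}\subseteq A^{r,F}_X$ is the image of a natural morphism of finite type: send tuples $(a;s_1,\ldots,s_r)$ with $a\in A^{r,F}_X$ and $s_j\in H^0(Y,\pi_Y^{\ast}F)$ satisfying $\pi_Y^{\ast}a_i=e_i(s_1,\ldots,s_r)$ to the component $a$. This realizes $(1)^{\oplus r}_{Y\to X}$ as a constructible subset of the finite-dimensional affine space $A^{r,F}_X$, and the family is monotone as $Y$ enlarges. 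Because $A^{r,F}_X$ is Noetherian, the union stabilizes at some \'etale Galois cover $X_{r,\Hsplit}\to X$, yielding the first part.

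For the forward inclusion of the Hitchin image, take $(\mathcal{V},\theta)\in Z^{\FGss,P_0}(1)$. By definition of the $1$-stratum, $\pi_{r,\split}^{[\ast]}(\mathcal{V},\theta)$ splits as $\bigoplus_{i=1}^r(\mathcal{L}_i,\theta_i)$ with each $\theta_i\in H^0(X_{r,\split},\pi_{r,\split}^{\ast}F)$, so the Hitchin image of $(\mathcal{V},\theta)$ becomes $\prod_i(t-\theta_i)$ after pullback to $X_{r,\split}$. The key task is to descend this complete splitting to an \'etale cover. Since $G=\mathrm{Gal}(X_{r,\split}/X)$ permutes the summands and hence the set $\{\theta_i\}$, its action factors through a finite quotient $H\subseteq S_r$, and the individual $\theta_i$ descend to the intermediate cover $Y\to X$ corresponding to $\ker(G\twoheadrightarrow H)$. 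Smoothness of $X$ together with the local freeness of $F$ (so that $F$-valued sections on the complement of a codimension-$\geq 2$ subset extend uniquely) ensures that $Y\to X$ is actually \'etale, so the Hitchin image lies in $(1)^{\oplus r}_{Y\to X}$, and hence by stabilization in $(1)^{\oplus r}_{X_{r,\Hsplit}\to X}$.

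For the reverse inclusion, let $p\in(1)^{\oplus r}_{X_{r,\Hsplit}\to X}$ split as $\prod_i(t-s_i)$ with $s_i\in H^0(X_{r,\Hsplit},\pi^{\ast}F)$, and decompose $\{s_1,\ldots,s_r\}=\bigsqcup_j O_j$ into $\mathrm{Gal}(X_{r,\Hsplit}/X)$-orbits. Each orbit determines an \'etale intermediate cover $Y_j\to X$ of degree $|O_j|$ with a distinguished root $s_j\in H^0(Y_j,\pi_{Y_j}^{\ast}F)$. For a suitably chosen line bundle $\mathcal{M}_j$ on $Y_j$, push forward the rank-one Higgs bundle $(\mathcal{M}_j,s_j)$ along $Y_j\to X$ to obtain a rank-$|O_j|$ polystable $F$-twisted Higgs bundle whose characteristic polynomial is $\prod_{s\in O_j}(t-s)$. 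The direct sum over $j$ produces a rank-$r$ polystable Higgs bundle whose Hitchin image is $p$ and whose reflexive pullback to $X_{r,\split}$ splits into rank-one summands by construction; tuning the $\mathcal{M}_j$ (e.g.\ by tensoring with pullbacks of a fixed line bundle from $X$) arranges the Hilbert polynomial to equal $P_0$, placing the construction in $Z^{\FGss,P_0}(1)$.

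The main obstacle is the \'etale descent in the forward inclusion: one must show that the individual roots of the characteristic polynomial, not merely their $G$-invariant elementary symmetric functions, already live on an \'etale subcover of $X_{r,\split}$. This is where the smoothness of $X$ enters crucially, via the reflexivity of $F$-valued sections across the codimension-$\geq 2$ ramification locus of a quasi-\'etale cover; without this input one would only descend the splitting to a quasi-\'etale, rather than \'etale, cover of $X$, and the identification with $(1)^{\oplus r}_{X_{r,\Hsplit}\to X}$ would fail.
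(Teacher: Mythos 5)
The foundational step of your argument --- the existence of a stabilizing cover $X_{r,\Hsplit}$ --- has a genuine gap. You observe correctly that each $(1)^{\oplus r}_{Y\to X}$ is a closed (in particular constructible) subset of $A^{r,F}_X$ and that the family is monotone increasing in $Y$, but you then invoke Noetherianity of $A^{r,F}_X$ to conclude that the union stabilizes. Noetherian spaces satisfy the \emph{descending} chain condition on closed subsets, not the ascending one: the chain $\{1\}\subset\{1,2\}\subset\cdots$ in $\mathbb{A}^1$ is an increasing chain of closed subsets that never stabilizes. Since the covers $Y\to X$ have unbounded degree, nothing in your argument prevents $(1)^{\oplus r}_{Y\to X}$ from growing indefinitely. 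The paper closes exactly this gap with a degree bound: it defines $X_{r,\Hsplit}$ to dominate all \'etale (prime to $p$) Galois covers of degree $\le r$ --- there are only finitely many of these because $\pi_1^{\et}(X_{sm})$ is topologically finitely generated (Corollary \ref{cor: quasi-etale of fixed degree}) --- and then shows, via Lemma \ref{roots-distinct}, that for an \emph{irreducible} $s$ that splits completely on $Y$ the Galois group acts transitively on the $r$ pairwise distinct roots of $s_{\mid Y}$, so each individual root already descends to an intermediate \'etale cover of degree exactly $r$. That transitivity statement is the content that makes the first claim true; without it your $X_{r,\Hsplit}$ is not known to exist, and everything downstream that appeals to ``stabilization'' is unsupported.

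The remaining two steps are essentially sound and close in spirit to the paper's. Your reverse inclusion (pushing forward rank-one Higgs bundles from the intermediate covers attached to Galois orbits of roots) is the paper's Lemma \ref{lemma:descend invariant sections}; you should simply take $\mathcal{M}_j=\mathcal{O}_{Y_j}$ rather than ``tune'' line bundles, since the pushforward of $\mathcal{O}_{Y_j}$ along an \'etale cover is \'etale-trivializable and hence automatically has vanishing Chern classes and Hilbert polynomial $P_0$, whereas tensoring by a nontrivial line bundle would destroy this. (You should also reduce to irreducible factors as the paper does, so that the roots in each orbit are genuinely pairwise distinct.) Your forward inclusion is fine modulo the first part --- and note that $X_{r,\split}\to X$ is already \'etale in the smooth case, so the codimension-two extension argument you single out as the crux is not where smoothness actually enters; smoothness is used to guarantee that the covers produced in the existence direction are \'etale rather than merely quasi-\'etale.
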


Let $X$ be a smooth projective variety.
We denote by $M^r_{X,\text{Dol}}$ the \emph{Dolbeault moduli space}
$M^{P_0,\Omega^1_X\mathrm{-G-ss}}_X$
of semistable Higgs bundles on $X$ with vanishing Chern classes.
As a consequence of Theorem \ref{1stratumimage},
we are able to determine the image of the Hitchin morphism 
from the Dolbeault moduli space, 
when $X$ is a hyperelliptic or an abelian variety.
In characteristic $0$, this coincides with the conjectured image
of the Hitchin morphism in \cite{chenngo}.
The image of the Hitchin morphism for abelian varieties in char $0$ 
was previously determined in \cite[Example 5.1]{chenngo}.

\begin{theorem}[Corollary \ref{hyperelliptic}]\label{hypell}
    Let $X$ be a smooth projective variety that admits an \'etale Galois cover $Y\to X$, where $Y$ is an abelian variety.
    Then the set-theoretic image of $h^r:M^r_{X,\text{Dol}}\to A^{r,\Omega^1_X}_X$ coincides with the subset $(H^0(Y,\Omega^1_Y)^r\sslash S_r)^G$
    of $A^{r,\Omega^1_Y}_Y$, where $G:=\gal(Y/X)$ and $S_r$ denotes the symmetric group in $r$ letters.
    
    In particular, if $X$ is an abelian variety, we have $\mathrm{im}(h^r)=H^0(X,\Omega^1_X)^r\sslash S_r$.
\end{theorem}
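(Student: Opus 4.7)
The plan is to first prove the statement when $X$ itself is an abelian variety, and then descend along $\pi\colon Y\to X$ using the $G$-action. Let $V:=H^0(Y,\Omega^1_Y)$, so that $\Omega^1_Y\cong\mathcal{O}_Y\otimes V$ is trivial and $A^{r,\Omega^1_Y}_Y=\bigoplus_{i=1}^{r}\Sym^i V$.

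For $X$ abelian, I would first argue that every point of $M^r_{X,\mathrm{Dol}}$ lies in the semistable $1$-stratum $Z^{\FGss,P_0}(1)$. In characteristic $0$, the Simpson correspondence identifies this moduli space set-theoretically with the space of semisimple complex representations of $\pi_1(X)\cong\mathbb{Z}^{2\dim X}$; since this group is abelian, any such representation is a direct sum of characters, so every S-equivalence class in $M^r_{X,\mathrm{Dol}}$ is represented by a direct sum of rank-$1$ Higgs bundles on $X$ itself. In particular, each such class pulls back to a sum of rank-$1$ Higgs bundles on $X_{r,\split}$, hence lies in $Z^{\FGss,P_0}(1)$, and Theorem \ref{1stratumimage} yields $\im(h^r)=(1)^{\oplus r}_{X_{r,\Hsplit}\to X}$. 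Since any étale cover of an abelian variety is again abelian with an isomorphism on spaces of global $1$-forms, this split locus coincides with the image of the elementary symmetric polynomial map $V^r\to\bigoplus_i\Sym^i V$, which is exactly $V^r\sslash S_r$. This establishes the ``in particular'' part.

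For general $X$, the étale Galois pullback $\pi^{\ast}\colon A^{r,\Omega^1_X}_X\hookrightarrow A^{r,\Omega^1_Y}_Y$ identifies the source with the $G$-invariants (as $\pi$ is a $G$-torsor and $\Omega^1_X$ descends from $\Omega^1_Y$), and pullback along $\pi$ sends $M^r_{X,\mathrm{Dol}}$ into $M^r_{Y,\mathrm{Dol}}$ compatibly with Hitchin morphisms. Applying the abelian case to $Y$, any point of $\im(h^r_X)$ pulls back to a point of $V^r\sslash S_r$ and is $G$-invariant by construction, giving the inclusion $\im(h^r_X)\subseteq(V^r\sslash S_r)^G$. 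For the reverse inclusion, given $p\in(V^r\sslash S_r)^G$, I would choose a lift $(v_1,\ldots,v_r)\in V^r$; $G$-invariance of $p$ yields for each $g\in G$ a permutation $\sigma_g\in S_r$ with $g^{\ast}v_i=v_{\sigma_g(i)}$, defining a homomorphism $G\to S_r$. Equipping $\bigoplus_{i=1}^r(\mathcal{O}_Y,v_i)$ with the $G$-equivariant structure permuting summands via this homomorphism (and using the natural $G$-action on $\mathcal{O}_Y$) gives a Higgs field compatible with the $G$-action by construction, and étale descent then produces a Higgs bundle on $X$ with characteristic polynomial $p$.

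The main technical obstacle will be the descent step: one must verify that the $G$-equivariant structure is well-defined on $\bigoplus_i(\mathcal{O}_Y,v_i)$ up to isomorphism of the descended bundle (the ambiguity in $\sigma_g$ caused by repeated values among the $v_i$ must be absorbed by automorphisms of the summands), and that the resulting bundle on $X$ genuinely lies in $M^r_{X,\mathrm{Dol}}$. The second point follows since vanishing of Chern classes and semistability both descend from $Y$ to $X$ along the étale cover; the first is a combinatorial bookkeeping argument, which should be straightforward once the $G$-orbit structure on $\{v_1,\ldots,v_r\}$ is taken into account.
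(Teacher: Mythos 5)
Your overall strategy is sound in characteristic $0$, but it diverges from the paper at the key step and, as written, does not prove the theorem in the generality in which it is stated. The paper works over an arbitrary algebraically closed field of characteristic $p\ge 0$, and the forward inclusion $\im(h^r)\subseteq (H^0(Y,\Omega^1_Y)^r\sslash S_r)^G$ is obtained purely algebraically (Corollary \ref{Fettriv}): since $\Omega^1_X$ trivializes on $Y$, the coefficients of the characteristic polynomial of any Higgs field pull back to \emph{constant} sections of $\Sym^j\mathcal{O}_Y^{\oplus d}$, so the eigenvalues are locally constant, hence constant on the connected $Y$, and the polynomial splits completely over $Y$; combined with Lemma \ref{H-split} and Theorem \ref{1-stratum image} this identifies the image with $(1)^{\oplus r}_{X_{r,\Hsplit}\to X}$. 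You instead invoke Simpson's non-abelian Hodge correspondence to decompose every point of $M^r_{Y,\mathrm{Dol}}$ into rank-$1$ summands. That proves a stronger statement (the Higgs bundle itself splits, not merely its characteristic polynomial), but it is a transcendental input available only over $\mathbb{C}$, so your argument loses the positive-characteristic case that the paper explicitly claims, and even in characteristic $0$ it replaces a two-line linear-algebra observation with a much heavier theorem. Note also that what you actually need is only the splitting of the characteristic polynomial, for which Simpson is overkill.

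For the reverse inclusion your construction is essentially the paper's Lemma \ref{lemma:descend invariant sections} in different clothing: a $G$-equivariant permutation structure on $\bigoplus_i(\mathcal{O}_Y,v_i)$ descends to the pushforward $\pi'_*(\mathcal{O}_{Y'},t')$ from the intermediate cover $Y'=Y/H$, $H$ the stabilizer of a root. The ``combinatorial bookkeeping'' you flag is resolved exactly as you suspect: factor the $G$-invariant polynomial into irreducible factors over $X$; by Lemma \ref{roots-distinct} the roots of each irreducible factor form a single $G$-orbit of pairwise distinct sections, so the permutation $\sigma_g$ is unambiguous there and the cocycle condition is automatic, while repeated factors are handled by taking direct sums of the corresponding descended rank-$m$ pieces. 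So this half of your argument is correct and matches the paper's; the genuine gap is the restriction to $\mathbb{C}$ in the first half, which you should either accept as a hypothesis or replace by the constant-eigenvalue argument.
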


\subsection*{Method of proofs}

To study stability of reflexive twisted sheaves under pullback
and prove \Cref{main1},
we first treat the case 
where the cover $\pi:Y\to X$ is finite Galois.
Using the symmetries obtained from the Galois action,
we show that $Y\times_X Y$ is connected in codimension $1$,
that is, for each pair of irreducible components, there is a path
via irreducible components of $Y\times_X Y$ that intersect
in codimension $1$,
if and only if $Y\to X$ does not factor via a non-trivial quasi-\'etale cover.
This is also equivalent to
the induced morphism
$\pi:\pi_1^{\et}(Y_{sm})\to \pi_1^{\et}(X_{sm})$
of \'etale fundamental groups of the smooth loci to be surjective.
We call such morphisms \emph{genuinely ramified in codimension $1$}.

This is an analogue of the equivalence obtained in \cite{bdp} in the Galois case.
We note that our method of proof also recovers the characterization
of genuinely ramified covers in loc. cit. in the Galois case, 
but is much shorter than the original.

We use this characterization of morphisms genuinely ramified in codimension $1$
to show that they preserve slope-stability of reflexive sheaves
via methods similar -- but somewhat more technical -- than in loc. cit.
to obtain \Cref{main1}.

In the proof \Cref{main2}, the Galois action on the stable direct
summands of the reflexive pullback of a stable reflexive twisted sheaf plays
a crucial role. Together with descent results for twisted reflexive sheaves
and Jordan's theorem, we are able to generalize the proof
\cite[Theorem 3.11]{fun} to our setting.

To prove the first part of Theorem \ref{1stratumimage}, 
we define $X_{r,\Hsplit}\to X$ to be an \'etale Galois cover 
that dominates every \'etale Galois cover of degree at most $r$. 
Then it is easy to observe that the inclusion 
$(1)^{\oplus r}_{X_{r,\Hsplit}\to X}\subset(1)^{\oplus r}_{Y\to X}$
holds for all \'etale Galois covers $Y\to X$ dominating $X_{r,\Hsplit}\to X$. 
To prove the reverse inclusion, we show that for any $s\in(1)^{\oplus r}_{Y\to X}$,
the decomposition of $s_{\mid Y}$ as a product of degree $1$ factors descends 
to a decomposition of $s_{\mid X_{r,\Hsplit}}$ as a product of degree $1$ factors.

For the second part, we show that for any $s\in(1)^{\oplus r}_{X_{r,\Hsplit}\to X}$ 
there is an $F$-Higgs bundle $(\mathcal{V},\theta)$ such that $\mathcal{V}$ is
an \'etale trivializable vector bundle of rank $r$ and the characteristic
polynomial of $\theta$ is $s$.
It is straightforward to see that the
characteristic polynomial of any Higgs bundle in $Z^{\FGss,P_0}(1)$ lies in
$(1)^{\oplus r}_{X_{r,\Hsplit}\to X}$.

To prove Theorem \ref{hypell} we first show that when $F$ is an \'etale
trivializable vector bundle, the set-theoretic image of the Hitchin morphism
$h^r:M^{P_0,\FGss}_X\to A^{r,F}_X$ coincides with 
$(1)^{\oplus r}_{X_{r,\Hsplit}\to X}$.
Then the theorem essentially follows from the fact 
that $\Omega^1_X\cong\mathcal{O}_X^{\oplus\dim X}$ for abelian varieties.

\section*{Notation}
We fix an algebraically closed base field $k$ of characteristic $p\geq 0$.
A \emph{variety} is a separated integral scheme of finite type over $k$.
Given a projective variety $X$ we implicitly 
fix an ample line bundle $\mathcal{O}_X(1)$.
For a finite morphism $\pi:Y\to X$ of projective varieties we set $\mathcal{O}_Y(1):=\pi^{\ast}\mathcal{O}_X(1)$.

Let $\pi:Y\to X$ be a morphism between normal projective varieties, 
and let $\mathcal{V}$ be a reflexive sheaf on $X$. 
We denote by $\mathcal{V}_{\mid_Y}$ 
the reflexive pullback $\pi^{[*]}\mathcal{V}:=(\pi^*\mathcal{V})^{\vee\vee}$.
This agrees with the usual pullback if the morphism $\pi$ is flat 
or if $\mathcal{V}$ is a vector bundle.

Let $X$ be a normal projective variety of dimension $d$. 
We denote the category of coherent sheaves on $X$ by $\coh(X)$.
The category $\refl(X)$ of reflexive sheaves on $X$ 
is the full subcategory of $\coh(X)$ where the objects are reflexive sheaves.

Furthermore for $d\ge2$, 
we denote by $\coh(X)_{d,d-1}$ the quotient of $\coh(X)$ by the category of coherent sheaves supported in codimension at least $2$, see \cite[§1.6]{hl}.

An open subset $U\subset X$ is called \emph{big} 
if the complement $X\setminus U$ has codimension at least $2$ in $X$.
 
\section*{Acknowledgements}

AP acknowledges support from the Deutsche Forschungsgemeinschaft (DFG, German Research Foundation) -- Project ID 286237555 (TRR 195) and Project ID 530132094.
We would like to thank Francesco Sala for the question on the image of
the decomposition strata under the Hitchin morphism.
Dario Weißmann would also like to thank Piotr Achinger for inspiring
discussions around fundamental groups.

\section{Reflexive twisted sheaves}
In this section, we introduce our main objects of interest:
(semi)stable reflexive sheaves sheaves together with a twisted endomorphism,
where (semi)stable stands for slope-(semi)stable.
All definitions and lemmas have a direct analogue in the untwisted setting
and the reader familiar with this may skip ahead to
\Cref{lemma:direct image quasi-etale}, where we show that
the direct image of a semistable reflexive sheaf under a quasi-\'etale cover
is semistable, or \Cref{lemma-key},
where we observe some symmetries under the stable
direct summands of the pullback of a stable reflexive sheaf under a Galois cover.

We begin by recalling some standard properties of reflexive sheaves.

\begin{lemma}[{\cite[Proposition 1.8]{reflexive-hartshorne}}]
    Let $\pi:Y\to X$ be a flat morphism between varieties, and let $\mathcal{V}$ be a reflexive sheaf on $X$. Then $\pi^{\ast}\mathcal{V}$
    is a reflexive sheaf on $Y$, i.e., $\pi^{\ast}\mathcal{V}=\mathcal{V}_{\mid Y}$.
\end{lemma}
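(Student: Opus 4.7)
The plan is to reduce reflexivity of $\pi^\ast\mathcal{V}$ to the exactness of flat pullback applied to a ``second syzygy'' presentation of $\mathcal{V}$. Specifically, I would first establish the standard fact that any reflexive coherent sheaf $\mathcal{V}$ on $X$ fits into a left-exact sequence
\[
 0 \to \mathcal{V} \to E_0 \to E_1
\]
with $E_0, E_1$ locally free of finite rank. To produce such a sequence, take a finite free presentation $\mathcal{O}_X^m \to \mathcal{O}_X^n \to \mathcal{V}^\vee \to 0$ of the coherent sheaf $\mathcal{V}^\vee$, apply $\SheafHom(-,\mathcal{O}_X)$, and use the left-exactness of this functor together with the identification $\mathcal{V}^{\vee\vee}=\mathcal{V}$ coming from the reflexivity of $\mathcal{V}$.

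Next, since $\pi$ is flat, the pullback functor $\pi^\ast$ is exact, so the sequence above pulls back to an exact sequence
\[
 0 \to \pi^\ast\mathcal{V} \to \pi^\ast E_0 \to \pi^\ast E_1
\]
on $Y$, with $\pi^\ast E_0$ and $\pi^\ast E_1$ locally free. It then remains to observe that the kernel of a morphism between locally free sheaves on a variety is reflexive; this can be verified via the $S_2$-characterization of reflexivity (normality in codimension one plus depth at least two at codimension-two points), or by dualizing twice and performing a short diagram chase using left-exactness of $\SheafHom(-,\mathcal{O}_Y)$.

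The second assertion $\pi^\ast\mathcal{V} = \mathcal{V}_{\mid Y}$ is then immediate from the definition $\mathcal{V}_{\mid Y} = (\pi^\ast\mathcal{V})^{\vee\vee}$ together with the reflexivity of $\pi^\ast\mathcal{V}$ just established. The main (if mild) obstacle is the sub-claim that kernels of maps between locally free sheaves are reflexive, since this is the single step where a substantive property of reflexive sheaves, beyond formal manipulation of the presentation and exactness of flat pullback, actually enters.
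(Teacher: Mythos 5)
The paper does not prove this lemma---it is quoted directly from Hartshorne---and your argument is precisely the standard one used there: realize $\mathcal{V}=(\mathcal{V}^{\vee})^{\vee}$ as the kernel of a map of locally free sheaves, pull back by the flat morphism $\pi$, and use that such kernels are reflexive on an integral scheme. The only caution is that your first suggested justification of that last step, the $S_2$-characterization of reflexivity, requires normality, which the lemma does not assume (a \emph{variety} here is merely integral); your second suggestion---double dualization and a short chase, using that $\pi^{\ast}\mathcal{V}$ injects into $\pi^{\ast}E_0$ and that the image of $\pi^{\ast}E_0\to\pi^{\ast}E_1$ is torsion-free on the integral scheme $Y$---is the one that works in this generality.
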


\begin{lemma}[{\cite[Corollary 1.7]{reflexive-hartshorne}}]
    Let $\pi:Y\to X$ be a proper dominant morphism between normal varieties with
    all fibers of the same dimension, and let $\mathcal{V}$ be a reflexive sheaf on $Y$.
    Then $\pi_*\mathcal{V}$ is a reflexive sheaf on $X$.
\end{lemma}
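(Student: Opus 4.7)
The plan is to invoke the characterization from \cite[Proposition 1.6]{reflexive-hartshorne}: on a normal integral Noetherian scheme, a coherent sheaf $\mathcal{F}$ is reflexive if and only if it is torsion-free and satisfies the codimension-two Hartogs property, i.e., for every open $U$ and every closed $Z\subseteq U$ of codimension at least $2$ the restriction $\mathcal{F}(U)\to\mathcal{F}(U\setminus Z)$ is an isomorphism. I would verify each of coherence, torsion-freeness, and Hartogs for $\pi_*\mathcal{V}$ in turn.

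First, coherence of $\pi_*\mathcal{V}$ follows from properness of $\pi$. For torsion-freeness I use the identification $(\pi_*\mathcal{V})(U)=\mathcal{V}(\pi^{-1}U)$: if a nonzero $f\in\mathcal{O}_X(U)$ annihilates some $s\in\mathcal{V}(\pi^{-1}U)$, then $\pi^*f$ is nonzero in $\mathcal{O}_Y(\pi^{-1}U)$ by dominance of $\pi$ and integrality of $Y$, so it acts injectively on the torsion-free sheaf $\mathcal{V}$ and forces $s=0$.

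The substance of the argument lies in the Hartogs step. For $U\subseteq X$ open and $Z\subseteq U$ closed of codimension $\geq 2$, one has $(\pi_*\mathcal{V})(U\setminus Z)=\mathcal{V}(\pi^{-1}U\setminus\pi^{-1}Z)$, so it suffices to show $\mathrm{codim}_Y\pi^{-1}Z\geq 2$ and then invoke the Hartogs property of $\mathcal{V}$ itself on the pair $\pi^{-1}Z\subseteq\pi^{-1}U$. Letting $f$ denote the common fiber dimension, dominance together with upper semicontinuity forces the generic fiber to have dimension $\leq f$, and equidimensionality forces equality, giving $\dim Y=\dim X+f$. Applying the fiber-dimension theorem to the restriction $\pi|_{\pi^{-1}Z}\colon\pi^{-1}Z\to Z$---whose fibers all still have dimension $f$---bounds each irreducible component of $\pi^{-1}Z$ by $\dim Z+f$, and thus $\mathrm{codim}_Y\pi^{-1}Z\geq\mathrm{codim}_XZ\geq 2$.

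The main obstacle is precisely this codimension estimate: the equidimensional-fibers hypothesis is indispensable, since a proper dominant morphism with jumping fiber dimensions (e.g., a blow-up along a subvariety of codimension $\geq 2$) can produce components of $\pi^{-1}Z$ of codimension $0$ or $1$ in $Y$, which would break the reduction to the Hartogs property of $\mathcal{V}$. Once the codimension bound is in hand, reflexivity of $\mathcal{V}$ on $Y$ transfers directly to reflexivity of $\pi_*\mathcal{V}$ on $X$.
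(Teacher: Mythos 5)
Your argument is correct. The paper gives no proof of this lemma --- it is quoted directly from Hartshorne's \emph{Stable reflexive sheaves} as Corollary 1.7 --- and your proof is essentially Hartshorne's own: reduce reflexivity to torsion-freeness plus the codimension-two extension property via his Proposition 1.6, and use the equidimensionality of the fibers together with $\dim Y=\dim X+f$ to see that $\pi^{-1}Z$ has codimension at least $2$ whenever $Z$ does.
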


\begin{definition}[Category of reflexive $F$-twisted sheaves]
\label{fts}
    Let $X$ be a normal variety and let $F$ be a vector bundle on $X$.
    A coherent sheaf $\mathcal{V}$ together with an $\mathcal{O}_X$-linear morphism
    $\theta:\mathcal{V}\to \mathcal{V}\otimes F$ is called an
    \emph{$F$-twisted sheaf}. 
    If moreover $\mathcal{V}$ is reflexive (resp. locally free), it is called a \emph{reflexive $F$-twisted sheaf} (resp. \emph{$F$-twisted bundle}).
    
    Let $(\mathcal{V},\theta_1)$ and $(\mathcal{W},\theta_2)$ be 
    $F$-twisted sheaves.
    A morphism of coherent sheaves $f:\mathcal{V}\to \mathcal{W}$ is a
    \emph{morphism of $F$-twisted sheaves} 
    if $f$ is compatible with $\theta_1$ and $\theta_2$.
    In other words,
    we have $\theta_2\circ f=(f\otimes\id_F)\circ\theta_1$.
     
    We denote the set of morphisms of $F$-twisted sheaves by 
    $\Hom_{F\text{-}TS}(\mathcal{V},\mathcal{W})$ 
    and the category of $F$-twisted sheaves by $F$-$\coh(X)$.
    The full subcategory 
    of $F$-twisted sheaves where the underlying sheaf is
    reflexive is denoted by $\refl_{F-TS}(X)$.
    
    Let $(\mathcal{V},\theta)$ be an $F$-twisted sheaf. 
    A coherent subsheaf $\mathcal{W}\subset \mathcal{V}$ is called an \emph{$F$-twisted subsheaf} 
    if the restricted morphism $\theta|_\mathcal{W}:\mathcal{W}\to \mathcal{V}\otimes F$
    factors through $\mathcal{W}\otimes F$. 
    
    For $d\ge2$ we denote by $F$-$\coh(X)_{d,d-1}$ the quotient of $F$-$\coh(X)$ by the category of $F$-twisted sheaves supported in codimension $\ge2$.
    Setting $F=0$ or $\theta=0$ recovers the categories $\refl(X)$, $\coh(X)$, and $\coh_{d,d-1}(X)$.
\end{definition}

\begin{remark}
    Let $X$ be a normal projective variety of dimension $d\geq 1$. 
    Let $F$ be a vector bundle on $X$.
    Then $F$-$\coh(X)_{d,d-1}$ is an abelian category because it is the quotient of an abelian category.
\end{remark}

\subsection*{Adjunction of push and pull}

Pushforward and reflexive pullback
define functors on the category of twisted sheaves, satisfying the
expected adjunction.

\begin{definition}[Cover]
    A finite separable morphism between normal varieties is called a \emph{cover}. A cover is called \emph{Galois}
    if the induced field extension of function fields is Galois.
\end{definition}

Henceforth, we replace the term 'finite separable morphism' with 'cover'.

\begin{lemma}
    Let $\pi:Y\to X$ be a cover of normal projective varieties, 
    and let $F$ be a vector bundle on $X$. 
    We obtain an adjoint pair of functors 
    \[
        \refl_{F_{\mid Y}-TS}(Y)\underset{\pi_{\ast}}{\overset{\pi^{[\ast]}}
        {\leftrightarrows}}\refl_{F-TS}(X).
    \]
\end{lemma}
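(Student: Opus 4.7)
The plan is to first upgrade $\pi^{[\ast]}$ and $\pi_{\ast}$ to functors between the twisted categories, and then to deduce the desired adjunction from the ordinary adjunction $\pi^{\ast}\dashv\pi_{\ast}$ on coherent sheaves by invoking two universal properties together with naturality.

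To upgrade $\pi^{[\ast]}$, I would start with a reflexive $F$-twisted sheaf $(\mathcal{V},\theta)$ and pull back $\theta$ to obtain $\pi^{\ast}\theta:\pi^{\ast}\mathcal{V}\to\pi^{\ast}\mathcal{V}\otimes\pi^{\ast}F$. The key observation is the canonical isomorphism $\pi^{[\ast]}\mathcal{V}\otimes\pi^{\ast}F\cong(\pi^{\ast}\mathcal{V}\otimes\pi^{\ast}F)^{\vee\vee}$, which holds because $\pi^{\ast}F$ is locally free, so tensoring with it commutes with the double dual. Composing $\pi^{\ast}\theta$ with reflexification then yields $\pi^{[\ast]}\theta:\pi^{[\ast]}\mathcal{V}\to\pi^{[\ast]}\mathcal{V}\otimes\pi^{\ast}F$, and functoriality on morphisms is routine. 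For $\pi_{\ast}$, given $(\mathcal{W},\sigma)$, I would push forward $\sigma$ and apply the projection formula $\pi_{\ast}(\mathcal{W}\otimes\pi^{\ast}F)\cong\pi_{\ast}\mathcal{W}\otimes F$ (valid since $F$ is locally free) to obtain $\pi_{\ast}\sigma:\pi_{\ast}\mathcal{W}\to\pi_{\ast}\mathcal{W}\otimes F$; reflexivity of $\pi_{\ast}\mathcal{W}$ follows from the preceding lemma, as $\pi$ is a finite cover.

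For the adjunction itself, I would combine two inputs: the ordinary adjunction $\Hom_{\coh(Y)}(\pi^{\ast}\mathcal{V},\mathcal{W})\cong\Hom_{\coh(X)}(\mathcal{V},\pi_{\ast}\mathcal{W})$, and the universal property of the double dual against reflexive targets, which gives $\Hom_{\coh(Y)}(\pi^{\ast}\mathcal{V},\mathcal{W})=\Hom_{\refl(Y)}(\pi^{[\ast]}\mathcal{V},\mathcal{W})$ whenever $\mathcal{W}$ is reflexive. Together these produce the bijection at the level of underlying reflexive sheaves. The remaining task is to check that this bijection restricts to the twisted morphism sets: the commutative square expressing that $f:\pi^{[\ast]}\mathcal{V}\to\mathcal{W}$ respects the twist corresponds, under adjunction and the identifications above, to the analogous square for its adjunct $g:\mathcal{V}\to\pi_{\ast}\mathcal{W}$, by naturality of the unit and counit. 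The main obstacle is essentially bookkeeping — carefully tracking how the unit and counit interact with the canonical isomorphisms $\pi^{[\ast]}(\mathcal{V}\otimes F)\cong\pi^{[\ast]}\mathcal{V}\otimes\pi^{\ast}F$ and $\pi_{\ast}(\mathcal{W}\otimes\pi^{\ast}F)\cong\pi_{\ast}\mathcal{W}\otimes F$ — rather than any conceptual difficulty.
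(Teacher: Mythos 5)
Your proof is correct, and it shares the paper's overall strategy --- define $\pi_{\ast}$ on twists via the projection formula and reduce everything to the ordinary adjunction $\pi^{\ast}\dashv\pi_{\ast}$ on coherent sheaves --- but the reduction mechanism is genuinely different. The paper restricts to a big open $U\subset X$ on which $\mathcal{V}$ is locally free (so that reflexive pullback over $\pi^{-1}(U)$ is just ordinary pullback and nothing needs reflexifying) and invokes the Hartogs equivalence between reflexive sheaves on a normal variety and on a big open; this gives the construction of $\pi^{[\ast]}\theta$ and the twisted adjunction simultaneously, at the small cost of checking that any big open of $Y$ contains the preimage of a big open of $X$ (true since $\pi$ is finite). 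You instead work globally, constructing $\pi^{[\ast]}\theta$ explicitly via the identification $(\pi^{\ast}\mathcal{V}\otimes\pi^{\ast}F)^{\vee\vee}\cong\pi^{[\ast]}\mathcal{V}\otimes\pi^{\ast}F$, and deducing the adjunction from the left-adjointness of reflexification: $\Hom_{\coh(Y)}(\pi^{\ast}\mathcal{V},\mathcal{W})\cong\Hom(\pi^{[\ast]}\mathcal{V},\mathcal{W})$ for reflexive $\mathcal{W}$, which holds on a normal variety because $\SheafHom(-,\mathcal{W})$ is reflexive and $\pi^{\ast}\mathcal{V}\to\pi^{[\ast]}\mathcal{V}$ is an isomorphism on a big open of $Y$ (here using that $\mathcal{V}$ is locally free in codimension one, so $\pi^{\ast}\mathcal{V}$ has no torsion in codimension one). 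Both routes are valid and essentially interchangeable; yours makes the functor $\pi^{[\ast]}$ and its universal property explicit, the paper's is shorter because the Hartogs equivalence is already set up. The only step you should spell out is the final compatibility check: the square for $f:\pi^{[\ast]}\mathcal{V}\to\mathcal{W}$ commutes if and only if its precomposition with $\pi^{\ast}\mathcal{V}\to\pi^{[\ast]}\mathcal{V}$ does, because that precomposition is injective on morphisms into the torsion-free sheaf $\mathcal{W}\otimes\pi^{\ast}F$; from there naturality of unit and counit transports the square to the one for the adjunct, as you indicate.
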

 
\begin{proof}
    Recall that Hartogs lemma induces an equivalence
    of categories between reflexive sheaves on a normal variety
    and reflexive sheaves on a big open subvariety via push-pull.

    The direct image $(\pi_{\ast}\mathcal{V},\pi_{\ast}\theta)$ of
    an $F_{\mid Y}$-twisted reflexive sheaf $(\mathcal{V},\theta)$ on $Y$
    becomes an $F$-twisted reflexive sheaf on $X$
    via applying the canonical isomorphism 
    obtained from the projection formula
    $\pi_{\ast}(\mathcal{V}\otimes F_{\mid Y})\cong \pi_{\ast}\mathcal{V} \otimes F$
    to the target of $\pi_{\ast}(\theta)$.

    Then we recover the above adjunction
    via restricting to the locus where both source and target are vector bundles.
    Note that this locus is a big open subvariety by normality.
\end{proof}

\subsection*{Stability for twisted sheaves.}

Let $X$ be a normal projective variety of dimension $d\geq 1$. 
Let $F$ be a vector bundle on $X$.
Recall that the \emph{slope} $\mu(-)$
of a coherent sheaf torsion-free in codimension $1$ is defined as $\deg(-)/\rk(-)$. 
This endows the category $F$-$\coh_{d,d-1}(X)$ with a stability structure
in the sense of \cite{abst}.
This allows us to define (semi)stability
of an $F$-twisted sheaf in $F$-$\coh_{d,d-1}(X)$.
We also obtain the Harder-Narasimhan filtration (HN-filtration),
the destabilizing subobject,
Jordan-Hölder filtrations (JH-filtrations), and the socle.
Taking reflexive hulls of these constructions,
we also obtain all these notions for reflexive $F$-twisted sheaves on $X$.
For reflexive sheaves we spell out these notions in the following:

An $F$-twisted sheaf $\mathcal{V}$ is called
\emph{semi-stable} if $\mathcal{V}$ is torsion-free in codimension $1$ and 
for all non-trivial invariant subsheaves $\mathcal{W}\subseteq \mathcal{V}$
we have $\mu(\mathcal{W})\leq \mu(\mathcal{V})$,
It is called \emph{stable} if the inequality is strict for non-trivial 
invariant subsheaves of smaller rank.
An $F$-twisted sheaf $(\mathcal{V},\theta)$ is called \emph{polystable}
if $\mathcal{V}$ is isomorphic 
(as an $F$-twisted sheaf)
to a direct sum of stable $F$-twisted sheaves of the same slope.

An $F$-twisted subsheaf $\mathcal{W}\subset \mathcal{V}$
of a torsion-free $F$-twisted
sheaf $\mathcal{V}$ is called \emph{saturated}
if the quotient $\mathcal{V}/\mathcal{W}$ is torsion-free.

Let $\mathcal{V}$ be a reflexive $F$-twisted sheaf.
The \emph{HN-filtration} of $\mathcal{V}$ is the unique filtration
by saturated $F$-twisted subsheaves
\[
0\subset\mathcal{V}_1\subset\dots\subset\mathcal{V}_m=\mathcal{V}
\]
such that each quotient $\mathcal{V}^i:=\mathcal{V}_i/\mathcal{V}_{i-1}$ 
is semistable and 
\[
\mu(\mathcal{V}^1)>\mu(\mathcal{V}^2)>\dots>\mu(\mathcal{V}^m).
\]
The subsheaf $\mathcal{V}_1$ is called the \emph{maximal destabilizing subsheaf} 
of $\mathcal{V}$, and, as the name suggests, 
is the subsheaf whose slope is maximal among all subsheaves of $\mathcal{V}$ 
(see \cite[Proposition 1.9]{abst}). 

Let $\mathcal{V}$ be a semistable reflexive $F$-twisted sheaf.
A \emph{JH-filtration} of $\mathcal{V}$
is a filtration of $\mathcal{V}$ by saturated $F$-twisted subsheaves
\[
    0 \subset \mathcal{V}_1 \subset \dots \subset \mathcal{V}_m=\mathcal{V}
\]
such that the successive quotients $\mathcal{V}_i/\mathcal{V}_{i-1}$
are stable of slope $\mu(\mathcal{V})$.
In contrast to the HN-filtration there may be several JH-filtrations.
However, the reflexive hull of the associated graded 
$\bigoplus_{i=1}^m \mathcal{V}_i/\mathcal{V}_{i-1}$ is unique.
The unique maximal polystable saturated subsheaf of a semistable sheaf
$\mathcal{V}$ of slope $\mu(\mathcal{V})$
is called the \emph{socle} of $\mathcal{V}$. 

The standard argument in \cite[Cor 1.2.8]{hl} carries over to show 
that stable reflexive twisted shaves are simple:
\begin{lemma}
\label{lemma-stable-simple}
    Let $X$ be a normal projective variety
    and $F$ a vector bundle on $X$. 
    Let $\mathcal{V}$ and $\mathcal{W}$ be stable
    reflexive $F$-twisted sheaves such that $\mu(\mathcal{V})=\mu(\mathcal{W})$. 
    Then any non-zero morphism $\mathcal{V}\to\mathcal{W}$ of $F$-twisted sheaves
    is an isomorphism and in this case we have
    \[
    \Hom_{F-TS}(\mathcal{V},\mathcal{W})\cong k.
    \]
\end{lemma}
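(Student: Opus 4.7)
The plan is to transcribe the argument of \cite[Cor 1.2.8]{hl} into the abelian quotient category $F$-$\coh(X)_{d,d-1}$, where the $F$-twisted structure survives and slope arguments are unaffected by what happens in codimension $\geq 2$. Let $f:\mathcal{V}\to\mathcal{W}$ be a non-zero morphism of reflexive $F$-twisted sheaves; I will first show $f$ is an isomorphism in $F$-$\coh(X)_{d,d-1}$, then upgrade this to an actual isomorphism of reflexive $F$-twisted sheaves via Hartogs, and finally deduce the Hom computation from a Schur-type argument.

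First I would form $\mathcal{K}:=\kernel f\subset\mathcal{V}$ and $\mathcal{I}:=\image f\subset\mathcal{W}$, both naturally $F$-twisted subsheaves because $f$ intertwines the twists. Since $\mathcal{W}$ is torsion-free, $\mathcal{K}$ is already saturated in $\mathcal{V}$. If $\mathcal{K}\neq 0$, the stability of $\mathcal{V}$ forces $\mu(\mathcal{K})<\mu(\mathcal{V})$, hence $\mu(\mathcal{V}/\mathcal{K})>\mu(\mathcal{V})=\mu(\mathcal{W})$, so the saturation $\mathcal{I}^{sat}\subset\mathcal{W}$ satisfies $\mu(\mathcal{I}^{sat})\geq\mu(\mathcal{I})=\mu(\mathcal{V}/\mathcal{K})>\mu(\mathcal{W})$, contradicting the stability of $\mathcal{W}$; hence $\mathcal{K}=0$. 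Then $\mathcal{I}\cong\mathcal{V}$ has slope $\mu(\mathcal{W})$, and if $\rk \mathcal{I}^{sat}<\rk\mathcal{W}$, the stability of $\mathcal{W}$ again gives $\mu(\mathcal{I}^{sat})<\mu(\mathcal{W})$, a contradiction. So $\rk\mathcal{V}=\rk\mathcal{W}$ and $f$ is an isomorphism in $F$-$\coh(X)_{d,d-1}$.

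To upgrade this, I restrict to the big open $U\subset X$ on which both $\mathcal{V}$ and $\mathcal{W}$ are locally free and $f$ is an isomorphism; such a $U$ exists by the previous step and normality. Pushing forward the inverse $(f|_U)^{-1}$ along the inclusion $U\hookrightarrow X$ and using that reflexive sheaves satisfy Hartogs' lemma, one obtains a two-sided inverse of $f$ as a morphism of reflexive $F$-twisted sheaves. Thus $f$ is an isomorphism in $\refl_{F-TS}(X)$.

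Finally, for the Hom computation, $\End_{\mathcal{O}_X}(\mathcal{V})\cong H^0(X,\SheafHom(\mathcal{V},\mathcal{V}))$ is finite-dimensional over $k$ because $X$ is projective, so the subspace $\End_{F-TS}(\mathcal{V})$ is a finite-dimensional $k$-algebra. By the first two steps, every non-zero element of $\End_{F-TS}(\mathcal{V})$ is invertible, so $\End_{F-TS}(\mathcal{V})$ is a finite-dimensional division algebra over the algebraically closed field $k$, hence equal to $k$. Composition with the isomorphism $f$ then yields $\Hom_{F-TS}(\mathcal{V},\mathcal{W})\cong\End_{F-TS}(\mathcal{V})\cong k$. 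The only mildly delicate point is ensuring that all formations above (kernel, image, saturation) stay inside the subcategory of $F$-twisted subsheaves, but this is immediate since $f$ commutes with the twists; beyond that the proof is formal.
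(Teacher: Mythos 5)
Your proposal is correct and follows essentially the same route as the paper: a slope/rank comparison showing a non-zero morphism is injective with cokernel supported in codimension $\geq 2$, an upgrade to a genuine isomorphism via reflexivity and Hartogs, and the Schur-type conclusion that $\End_{F-TS}(\mathcal{V})$ is a finite-dimensional division algebra over the algebraically closed field $k$. The only difference is that you spell out the kernel/image/saturation slope computation that the paper compresses into ``a direct consequence of the definition of stability.''
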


\begin{proof}
As a direct consequence of the
definition of stability a non-zero morphism of stable
$F$-twisted shaves of the same slope is injective and has cokernel
supported in codimension $2$ or higher. If both sheaves are in addition
reflexive sheaves on a normal projective variety, 
we find that the morphism is an isomorphism.
    
Note that $F$-twisted homomorphisms form a $k$-subvector space 
of the $k$-vector space of homomorphisms.
As $X$ is projective (proper suffices), we find that 
$F$-twisted homomorphisms
of $F$-twisted sheaves are a finite dimensional vector space.
Endomorphisms also form a (possibly non-commutative) $k$-algebra.
For a stable reflexive $F$-twisted sheaf $\mathcal{V}$ every non-zero morphism is invertible, thus $\mathrm{End}_{F-TS}(\mathcal{V})$ forms a finite dimensional skew field over $k$. As $k$ is algebraically closed, we conclude $\mathrm{End}_{F-TS}(\mathcal{V})=k$.
\end{proof} 

Recall that we denote by $(-)_{\mid Y}$ the reflexive 
pullback of an object along a morphism $Y\to X$.
We recall the behavior of the slope under (reflexive) pushforward and pullback:

\begin{lemma}[{\cite[Lemma 3.2.1]{hl}}]
\label{lemma-push-pull-slope}
Let $\pi:Y\to X$ be a cover of normal projective varieties.
Consider a reflexive sheaf $\mathcal{V}$ on $Y$ and a reflexive sheaf $\mathcal{W}$ on $X$. Then we have the following:
    \begin{enumerate}[(i)]
		\item $\mu(\mathcal{V})=d(\mu(\pi_{\ast}\mathcal{V})-\mu(\pi_{\ast}\mathcal{O}_Y))$.
		\item $\mu(\mathcal{W}_{\mid Y})=d\mu(\mathcal{W})$.
    \end{enumerate}  
\end{lemma}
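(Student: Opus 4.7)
The plan is to prove the two formulas separately by unwinding the definition of slope and invoking projection-formula identities; part (ii) is straightforward, while part (i) requires a little intersection-theoretic input. Throughout, let $n := \dim X = \dim Y$ (so that $\deg_X$ and $\deg_Y$ are computed against $\mathcal{O}(1)^{n-1}$), and note that the symbol $d$ in the statement must denote $\deg(\pi)$: only then are both sides of (i) and (ii) homogeneous, and only then does the formula match the analogue \cite[Lem.~3.2.2]{hl} for smooth varieties.

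For (ii), I would begin by observing that the reflexive pullback $\pi^{[\ast]}\mathcal{W}$ agrees with the ordinary pullback $\pi^{\ast}\mathcal{W}$ on the big open subset of $Y$ where the latter is already reflexive (for instance on $\pi^{-1}(X_{\mathrm{sm}})$ intersected with the locally free locus of $\mathcal{W}$). Since $c_1$ depends only on the codimension-$1$ behaviour, this gives $c_1(\pi^{[\ast]}\mathcal{W}) = \pi^{\ast}c_1(\mathcal{W})$. Combining with $\mathcal{O}_Y(1) = \pi^{\ast}\mathcal{O}_X(1)$ and the projection formula for zero-cycles $\int_Y\pi^{\ast}\alpha = \deg(\pi)\int_X\alpha$, I compute
\[
\deg_Y(\pi^{[\ast]}\mathcal{W}) \;=\; \pi^{\ast}\bigl(c_1(\mathcal{W})\cdot\mathcal{O}_X(1)^{n-1}\bigr) \;=\; \deg(\pi)\cdot\deg_X(\mathcal{W}),
\]
and since rank is preserved under pullback this immediately gives (ii).

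For (i), the starting point is the Hilbert polynomial identity $\chi(Y,\mathcal{V}\otimes\mathcal{O}_Y(m)) = \chi(X,\pi_{\ast}\mathcal{V}\otimes\mathcal{O}_X(m))$, which is the projection formula applied to $\pi^{\ast}\mathcal{O}_X(m) = \mathcal{O}_Y(m)$. Matching the leading $m^n$-coefficient recovers the well-known $\rk(\pi_{\ast}\mathcal{V}) = \deg(\pi)\cdot\rk(\mathcal{V})$. Matching the $m^{n-1}$-coefficient reduces the lemma to the degree identity
\[
\deg_X(\pi_{\ast}\mathcal{V}) \;=\; \deg_Y(\mathcal{V}) + \rk(\mathcal{V})\cdot\deg_X(\pi_{\ast}\mathcal{O}_Y).
\]
I would derive this from the codimension-$1$ part of Grothendieck--Riemann--Roch for the finite morphism $\pi$, namely
\[
c_1(\pi_{\ast}\mathcal{V}) \;=\; \pi_{\ast}c_1(\mathcal{V}) + \rk(\mathcal{V})\cdot c_1(\pi_{\ast}\mathcal{O}_Y),
\]
and then intersect with $\mathcal{O}_X(1)^{n-1}$, using the projection formula $\pi_{\ast}c_1(\mathcal{V})\cdot\mathcal{O}_X(1)^{n-1} = c_1(\mathcal{V})\cdot\mathcal{O}_Y(1)^{n-1} = \deg_Y(\mathcal{V})$. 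A direct rearrangement, invoking $\mu(\pi_{\ast}\mathcal{O}_Y) = \deg_X(\pi_{\ast}\mathcal{O}_Y)/\deg(\pi)$ and $\mu(\pi_{\ast}\mathcal{V}) = \deg_X(\pi_{\ast}\mathcal{V})/(\deg(\pi)\cdot\rk(\mathcal{V}))$, then turns this into (i).

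The main obstacle is the $c_1$-identity on a singular normal base, where GRR is not directly available. I would handle this by restricting everything to the smooth locus of $X$ together with its preimage in $Y$ (both big opens), where $\pi$ is a finite morphism between smooth quasi-projective varieties and the classical Grothendieck--Riemann--Roch gives the identity; since the slopes of reflexive sheaves depend only on codimension-$1$ data, restricting to a big open does not change anything. An alternative, more elementary route is to note that both sides of the degree identity are additive in short exact sequences on $Y$, so one may reduce via a $K$-theoretic dévissage to the case $\mathcal{V}=L$ of a line bundle; this case follows from the existence of the norm map $\det\pi_{\ast}L \cong \det\pi_{\ast}\mathcal{O}_Y \otimes \mathrm{Nm}(L)$ together with $\deg_X\mathrm{Nm}(L) = \deg_Y(L)$, which is again the projection formula.
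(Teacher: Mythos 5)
Your argument is essentially correct, and it is the standard one: the paper gives no proof of this lemma but simply cites \cite[Lemma 3.2.1]{hl}, and your route (reduce (ii) to $c_1(\pi^{[\ast]}\mathcal{W})=\pi^{\ast}c_1(\mathcal{W})$ plus the projection formula, and reduce (i) to the degree identity $\deg_X(\pi_{\ast}\mathcal{V})=\deg_Y(\mathcal{V})+\rk(\mathcal{V})\deg_X(\pi_{\ast}\mathcal{O}_Y)$, which is the codimension-one piece of Riemann--Roch for $\pi$) is exactly the content of that reference, extended to the normal case by working on a big open. You are also right that $d$ must be read as $\deg(\pi)$, and your arithmetic unwinding of (i) checks out. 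One small inaccuracy to repair in the reduction to the smooth case: the preimage $\pi^{-1}(X_{sm})$ need \emph{not} lie in the smooth locus of $Y$ (a finite cover can be singular over smooth points of the base), so you should instead take $U=X_{sm}\setminus\pi(Y\setminus Y_{sm})$ --- still a big open, since $\pi$ is finite and hence preserves codimension --- so that $\pi^{-1}(U)\subseteq Y_{sm}$ and $\pi\colon\pi^{-1}(U)\to U$ really is a finite morphism of smooth quasi-projective varieties. With that fix, either of your two routes for the $c_1$-identity (GRR on the big smooth locus, or d\'evissage to rank one plus the norm map, which is valid in codimension one where $\pi$ is automatically flat) goes through.
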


\begin{lemma}[Analogue of Lemma 2.4, \cite{fun}, in the twisted setting]
\label{lemma-twisted-descend}
    Let $Y\to X$ be a Galois cover of normal varieties with Galois group $G$.
    Let $F$ be a vector bundle on $X$. 
    Let $\mathcal{V}$ be an $F$-twisted reflexive sheaf on $X$. 
    Then an $F_{\mid Y}$-twisted saturated 
    subsheaf $\mathcal{W}\subset \mathcal{V}_{\mid Y}$
    descends to an $F$-twisted subsheaf of $\mathcal{V}$
    if and only if $\mathcal{W}\subset \mathcal{V}_{\mid Y}$
    is $G$-invariant, that is, compatible with the $G$-linearization of
    $\mathcal{V}_{\mid Y}$.
\end{lemma}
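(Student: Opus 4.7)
The plan is to reduce to the untwisted descent statement \cite[Lemma 2.4]{fun} and then check separately that the $F$-twist descends, by a faithfully flat descent argument on a big open subset.

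The ``only if'' direction is immediate: if $\mathcal{W} = (\mathcal{W}_0)_{\mid Y}$ for some $F$-twisted subsheaf $\mathcal{W}_0 \subset \mathcal{V}$, then the natural $G$-linearization on $\mathcal{V}_{\mid Y}$ restricts to a $G$-linearization on $\mathcal{W}$, so $\mathcal{W}$ is $G$-invariant.

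For the converse, assume $\mathcal{W}$ is $G$-invariant. I would first apply the untwisted descent result \cite[Lemma 2.4]{fun} to produce a saturated subsheaf $\mathcal{W}_0 \subset \mathcal{V}$ with $(\mathcal{W}_0)_{\mid Y} = \mathcal{W}$. It then remains to verify that the composition
\[
\alpha : \mathcal{W}_0 \hookrightarrow \mathcal{V} \xrightarrow{\theta} \mathcal{V}\otimes F
\]
factors through the subsheaf $\mathcal{W}_0 \otimes F \subset \mathcal{V}\otimes F$. For this, pick a big open $U \subset X$ over which $\mathcal{V}$ is locally free and $\pi$ is flat; such a $U$ exists since $\mathcal{V}$ is reflexive on the normal variety $X$ and $\pi$ is flat over the intersection of the smooth loci of $X$ and $Y$ by miracle flatness. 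Over $U$, the restriction of $\pi$ is faithfully flat and $G$-equivariant, so faithfully flat Galois descent identifies $G$-invariant subsheaves of $(\mathcal{V}\otimes F)_{\mid Y}$ with subsheaves of $\mathcal{V}\otimes F$; in particular the $G$-invariant subsheaf $\mathcal{W}\otimes F_{\mid Y}$ descends to $(\mathcal{W}_0 \otimes F)|_U$. The pullback of $\alpha|_U$ to $Y$ is the restriction of $\theta_{\mid Y}$ to $\mathcal{W}$, which by hypothesis factors through $\mathcal{W}\otimes F_{\mid Y}$. Descent of morphisms along a faithfully flat map then forces $\alpha|_U$ to factor through $(\mathcal{W}_0\otimes F)|_U$.

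Since $F$ is a vector bundle and $\mathcal{W}_0$ is reflexive, the tensor product $\mathcal{W}_0\otimes F$ is reflexive, so the factorization on $U$ extends uniquely to all of $X$ by Hartogs' lemma, endowing $\mathcal{W}_0$ with the structure of an $F$-twisted subsheaf of $(\mathcal{V},\theta)$. I expect the only mildly delicate point to be the faithfully flat descent of the factorization on $U$, which amounts to chasing that the induced morphism $\mathcal{W}_0|_U \to (\mathcal{V}\otimes F)|_U / (\mathcal{W}_0\otimes F)|_U$ vanishes after pullback to $\pi^{-1}(U)$; the extension of the factorization from $U$ to $X$ is then formal from reflexivity.
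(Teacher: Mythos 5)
Your argument is essentially correct, but it takes a genuinely different route from the paper's. You reduce to the untwisted descent statement to produce the saturated subsheaf $\mathcal{W}_0\subset\mathcal{V}$, and then descend the twist separately by checking that the composite $\mathcal{W}_0\hookrightarrow\mathcal{V}\xrightarrow{\theta}\mathcal{V}\otimes F\to(\mathcal{V}\otimes F)/(\mathcal{W}_0\otimes F)$ vanishes after pullback to $\pi^{-1}(U)$ for a big open $U$ over which $\pi$ is flat (which exists: a finite morphism of normal varieties is flat in codimension one by the DVR argument, so "miracle flatness" is not really the right citation), and finally extending the factorization from $U$ to $X$ by reflexivity of $\mathcal{W}_0\otimes F$. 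The paper instead proves everything in one pass: it restricts to the generic point, where $\kappa(Y)/\kappa(X)$ is an honest $G$-torsor, applies classical Galois descent to the $G$-invariant twisted subspace $\mathcal{W}_{\eta_Y}$, and then invokes Langton's unique saturated extension --- which is functorial, so the twisted structure descends for free --- both to produce the subsheaf of $\mathcal{V}$ and to identify its reflexive pullback with $\mathcal{W}$. Your approach is shorter if the untwisted input is granted; the paper's is self-contained.

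Two caveats. First, the assertion that "faithfully flat Galois descent identifies $G$-invariant subsheaves of $(\mathcal{V}\otimes F)_{\mid Y}$ with subsheaves of $\mathcal{V}\otimes F$" is false for a ramified (even flat) Galois cover: a $G$-linearization is strictly weaker than a descent datum off the \'etale locus (e.g.\ the $G$-invariant ideal $(x)\subset\mathcal{O}_{\mathbb{A}^1}$ does not descend along $x\mapsto x^2$ in characteristic $\neq2$). This is precisely why the paper passes to the generic point, where the cover \emph{is} a torsor. Fortunately the claim is not load-bearing for you: the sheaf in question, $\mathcal{W}\otimes F_{\mid Y}=\pi^*(\mathcal{W}_0\otimes F)$ over $\pi^{-1}(U)$, is already exhibited as a pullback, and the step you actually use --- that a morphism of quasi-coherent sheaves vanishing after faithfully flat pullback vanishes --- is correct. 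Second, your reduction cites \cite[Lemma 2.4]{fun} in the generality of saturated subsheaves of reflexive pullbacks along possibly ramified Galois covers of normal varieties; if that lemma is only stated for \'etale covers, smooth varieties, or locally free sheaves, you would need to supply the untwisted generalization yourself, and that is exactly the generic-point-plus-Langton argument the paper runs. You should either verify the cited lemma covers this generality or fold that argument into your proof.
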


\begin{proof}
    %\dario{use the cited lemma} 
    We proceed as in \cite[Lemma 2.4]{fun}. Let $\eta_X$ and $\eta_Y$ denote the generic points of $X$ and $Y$ respectively. 

    One implication is immediate. 
    For, the other let $\mathcal{W}\subset\mathcal{V}_{\mid Y}$
    be a saturated $G$-invariant $F_{\mid Y}$-twisted subsheaf.
    Restricting the inclusion to the generic point $\eta_Y$,
    we obtain a $G$-invariant $(F_{\eta_X})_{\mid \eta_Y}$-twisted 
    subvector space 
    $\mathcal{W}_{\eta_Y}\subset(\mathcal{V}_{\eta_X})_{\mid \eta_Y}$.

    The field extension $\kappa(Y)/\kappa(X)$ is a $G$-torsor.
    We can thus apply descent theory to obtain a $F_{\eta_X}$-twisted subspace 
    $\mathcal{W}'_{\eta_X}\subset \mathcal{V}_{\eta_X}$ such that $\mathcal{W}'_{\eta_X}\otimes_{\kappa(X)}\kappa(Y)\cong\mathcal{W}_{\eta_Y}$ 
    as an $(F_{\eta_X})_{\mid \eta_Y}$-twisted subvector space of $(\mathcal{V}_{\eta_X})_{\mid \eta_Y}$.

    By \cite[Proposition 1]{langton}, there is a unique saturated
    subsheaf $\mathcal{W}'\subset\mathcal{V}$ that induces the inclusion 
    $\mathcal{W}'_{\eta_X}\subset \mathcal{V}_{\eta_X}$.
    Note that loc. cit. is a functorial construction.
    Thus, $\mathcal{W}'$ is a $F$-twisted subsheaf of $\mathcal{V}$.

    Consider the reflexive $F_{\mid Y}$-twisted subsheaf
    $\mathcal{W}'_{\mid Y}$ of $\mathcal{V}_{\mid Y}$.
    Note that $\mathcal{W}'_{\mid Y}\subset \mathcal{V}_{\mid Y}$
    and $(\mathcal{W}'\otimes F)_{\mid Y}\subset (\mathcal{V}\otimes F)_{\mid Y}$
    are saturated.
    At the generic point $\eta_{Y}$ this coincides
    with the $F_{\eta_Y}$-twisted subspace
    $\mathcal{W}_{{\eta}_Y}$.
    Again, by the uniqueness of the saturated extension
    in \cite[Proposition 1]{langton} we conclude
    that $\mathcal{W}'_{\mid Y}\cong \mathcal{W}$ as $F_{\mid Y}$-twisted
    subsheaves of $\mathcal{V}_{\mid Y}$.
\end{proof}

\begin{lemma}
    \label{lemma-pullback-stable}
    Let $\pi:Y\to X$ be a cover
    of normal projective varieties. 
    Let $F$ be a vector bundle on $X$ 
    and let $\mathcal{V}$ be a reflexive $F$-twisted sheaf. 
    Then we have the following:
    \begin{enumerate}[(i)]
        \item $\mathcal{V}_{\mid Y}$ is semistable if and only 
        if $\mathcal{V}$ is semistable.
        \item If $\pi$ is Galois and 
        $\mathcal{V}$ is polystable, then so is 
        $\mathcal{V}_{\mid Y}$.
        \item If $\pi$ is Galois prime to $p$, 
            then $\mathcal{V}$ is polystable if and only if $\mathcal{V}_{\mid Y}$ is polystable.
    \end{enumerate}
\end{lemma}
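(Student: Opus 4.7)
The plan proceeds through (i) first and then leverages uniqueness of canonical filtrations in (ii) and a $G$-averaging argument in (iii).

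For (i), the direction ``$\mathcal{V}_{\mid Y}$ semistable $\Rightarrow$ $\mathcal{V}$ semistable'' is immediate by contrapositive: a saturated destabilizing $F$-twisted subsheaf $\mathcal{W}\subset\mathcal{V}$ reflexively pulls back to a saturated destabilizing $F_{\mid Y}$-twisted subsheaf of $\mathcal{V}_{\mid Y}$, the slope inequality being preserved because \Cref{lemma-push-pull-slope}(ii) multiplies all slopes by $d$. For the converse, I would reduce to the Galois case by replacing $\pi$ by its Galois closure $\tilde\pi:\tilde Y\to X$: a destabilizer of $\mathcal{V}_{\mid Y}$ pulls back to one of $\mathcal{V}_{\mid\tilde Y}=(\mathcal{V}_{\mid Y})_{\mid \tilde Y}$, so it suffices to work over $\tilde Y$. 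In the Galois case, supposing $\mathcal{V}_{\mid \tilde Y}$ has a nonzero maximal destabilizer $\mathcal{W}$, uniqueness of the HN filtration makes $\mathcal{W}$ invariant under $\gal(\tilde Y/X)$, so \Cref{lemma-twisted-descend} produces a saturated $F$-twisted subsheaf $\mathcal{W}'\subset\mathcal{V}$ whose slope — again via \Cref{lemma-push-pull-slope}(ii) — exceeds $\mu(\mathcal{V})$, contradicting semistability.

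For (ii), decomposing the polystable $\mathcal{V}$ into stable summands of the common slope $\mu(\mathcal{V})$ and pulling back reduces to the case of $\mathcal{V}$ stable. Then $\mathcal{V}_{\mid Y}$ is semistable by (i), and its socle $\mathcal{S}\subset\mathcal{V}_{\mid Y}$ is nonzero, saturated, of slope $\mu(\mathcal{V}_{\mid Y})$, and unique, hence $G$-invariant for $G=\gal(Y/X)$. \Cref{lemma-twisted-descend} descends $\mathcal{S}$ to a nonzero saturated $F$-twisted subsheaf $\mathcal{S}'\subset\mathcal{V}$ with $\mu(\mathcal{S}')=\mu(\mathcal{V})$; stability of $\mathcal{V}$ forces $\mathcal{S}'=\mathcal{V}$, so $\mathcal{S}=\mathcal{V}_{\mid Y}$ and $\mathcal{V}_{\mid Y}$ is polystable.

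For (iii), one direction is (ii). For the converse, suppose $\mathcal{V}_{\mid Y}$ is polystable; by (i), $\mathcal{V}$ is semistable, and polystability of $\mathcal{V}$ is equivalent to every saturated $F$-twisted subsheaf $\mathcal{W}\subset\mathcal{V}$ with $\mu(\mathcal{W})=\mu(\mathcal{V})$ being a direct summand. The pulled-back inclusion $i:\mathcal{W}_{\mid Y}\hookrightarrow\mathcal{V}_{\mid Y}$ has equal slopes and, by polystability of the target, splits via some $F_{\mid Y}$-twisted retraction $s$. Using $\gcd(|G|,p)=1$, I would average
\[
\bar s := \frac{1}{|G|}\sum_{\sigma\in G}\sigma^{-1}\circ s\circ\sigma,
\]
obtaining a $G$-equivariant morphism of $F_{\mid Y}$-twisted sheaves (the $G$-linearizations on $\mathcal{W}_{\mid Y}$ and $\mathcal{V}_{\mid Y}$ commute with their twist structures, since both come from objects on $X$), that still satisfies $\bar s\circ i=\id_{\mathcal{W}_{\mid Y}}$ because $i$ is $G$-equivariant. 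Galois descent of morphisms, applied on a big open locus where the sheaves involved are locally free and then extended by Hartogs' lemma, yields a retraction $\mathcal{V}\to\mathcal{W}$ in $\refl_{F\text{-}TS}(X)$, proving that $\mathcal{W}$ is a direct summand.

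The main obstacle is the averaging–descent step in (iii): one must check that averaging preserves $F_{\mid Y}$-twisted compatibility and that the resulting $G$-invariant retraction descends to a genuine splitting over $X$. This goes slightly beyond the subsheaf descent of \Cref{lemma-twisted-descend} and requires a morphism-version of Galois descent in the twisted category, or equivalently the identification of $G$-invariant sections of $\pi_*\SheafHom$ with sections on $X$ via the projection formula.
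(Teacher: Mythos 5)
Your proposal is correct and follows essentially the same route as the paper: (i) and (ii) are proved by descending the canonical destabilizing subsheaf (resp.\ the socle) along the Galois closure via \Cref{lemma-twisted-descend} and comparing slopes with \Cref{lemma-push-pull-slope}, exactly as in the paper. For (iii) the paper simply invokes the trace splitting of $\mathcal{O}_X\to\pi_{\ast}\mathcal{O}_Y$ and the argument of \cite[Lemma 3.2.3]{hl}; your explicit $\frac{1}{|G|}$-averaging of the retraction, descended on a big open where $\pi_{\ast}\mathcal{O}_Y$ is locally free, is precisely that trace argument written out, so the step you flag as the main obstacle is exactly what the cited reference supplies.
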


\begin{proof}
    Note that it suffices to show (i) after replacing $Y\to X$ by its Galois closure.
    Then the claim follows from the uniqueness of the destabilizing $F$-twisted 
    subsheaf $\mathcal{D}\subseteq\mathcal{V}_{\mid Y}$. 
    As $\mathcal{D}$ is unique it inherits the linearization of
    $\mathcal{V}_{\mid Y}$ as an $F$-twisted sheaf and 
    thus descends as a saturated $F_{\mid Y}$-twisted subsheaf 
    $\mathcal{D'}\subset \mathcal{V}$ in codimension $1$ by \Cref{lemma-twisted-descend}.
    As the slope only depends on the isomorphism
    class on a big open and the behaviour of the slope under pullback, we
    find $\mu(\mathcal{D'})>\mu(\mathcal{V})$ if and only if
    $\mu(\mathcal{D})>\mathcal{V}_{\mid Y}$.

    (ii) follows via an analogous argument after observing that it suffices
    to show the claim for stable $\mathcal{V}$ 
    and replacing the maximal destabilizing subsheaf by the socle.

    (iii) If $\pi$ is prime to $p$, then the trace splits $\mathcal{O}_X\to\pi_{\ast}\mathcal{O}_Y$ and we 
    can argue as in \cite[Lemma 3.2.3]{hl}.
\end{proof}

Recall that an open subset whose complement has codimension $\ge2$ is called \emph{big}.
We recall the notion of a quasi-\'etale cover.

\begin{definition}[Quasi-\'etale]
\label{def: quasi-etale}
    Let $Y\to X$ be a cover of a normal projective variety $X$.
    We call $Y\to X$ \emph{quasi-\'etale} if it is \'etale when restricted
    to a big open subvariety of $X$.
\end{definition}

\begin{lemma}
    \label{lemma:direct image quasi-etale}
    Let $\pi:Y\to X$ be an quasi-\'etale cover of a normal projective variety $X$.
    Then $\pi_{\ast}\mathcal{O}_Y$ has slope $0$.
    
    Let $F$ be a vector bundle on $X$ and $\mathcal{V}$ a semistable 
    $F_{\mid Y}$-twisted reflexive sheaf on $Y$.
    Then $\pi_{\ast}\mathcal{V}$ is a semistable $F$-twisted reflexive sheaf of 
    slope $\mu(\mathcal{V})/\deg(\pi)$.
\end{lemma}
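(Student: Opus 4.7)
The plan is to first compute $\mu(\pi_*\mathcal{O}_Y)$, then read off the slope of $\pi_*\mathcal{V}$ from \Cref{lemma-push-pull-slope}(i), and finally verify semistability by reducing to the Galois case.

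For $\mu(\pi_*\mathcal{O}_Y)=0$: the sheaf $\pi_*\mathcal{O}_Y$ is reflexive by the cited Hartshorne lemma (finite morphisms have equidimensional fibers), and its slope agrees with that of its restriction to the big \'etale locus $U\subset X$ (since divisor classes on $X$ and on a big open agree). Over $U$ the relative dualizing sheaf of $\pi$ is trivial, so Grothendieck duality for finite morphisms yields $\pi_*\mathcal{O}_{\pi^{-1}(U)}\cong(\pi_*\mathcal{O}_{\pi^{-1}(U)})^{\vee}$. Taking determinants shows $\det(\pi_*\mathcal{O}_Y)$ is $2$-torsion, hence of degree zero. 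Together with \Cref{lemma-push-pull-slope}(i) this gives $\mu(\pi_*\mathcal{V})=\mu(\mathcal{V})/\deg(\pi)$; reflexivity of $\pi_*\mathcal{V}$ and its $F$-twisted structure are inherited exactly as in the adjunction lemma above.

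Semistability in the Galois case: if $\pi$ is Galois with group $G$, then on $V:=\pi^{-1}(U)$ one has the standard Galois decomposition $\pi^*\pi_*\mathcal{V}|_V\cong\bigoplus_{g\in G}g^*\mathcal{V}|_V$ of $F_{|V}$-twisted sheaves (the twist is preserved because $F_{|V}$ is $G$-invariant). Taking reflexive hulls extends this to $\pi^{[*]}\pi_*\mathcal{V}\cong\bigoplus_{g\in G}g^*\mathcal{V}$ on $Y$ (noting $g^{[*]}=g^*$ since $g$ is an automorphism), which is semistable of slope $\mu(\mathcal{V})$ as a direct sum of $G$-translates of $\mathcal{V}$. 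Given a saturated $F$-twisted subsheaf $\mathcal{W}\subset\pi_*\mathcal{V}$, the induced morphism $\pi^{[*]}\mathcal{W}\to\pi^{[*]}\pi_*\mathcal{V}$ is injective on $V$, and its kernel on $Y$---being supported in codimension $\geq 2$ inside the reflexive sheaf $\pi^{[*]}\mathcal{W}$---vanishes. Semistability of the target and \Cref{lemma-push-pull-slope}(ii) then give $\deg(\pi)\mu(\mathcal{W})=\mu(\pi^{[*]}\mathcal{W})\leq\mu(\mathcal{V})$, i.e.\ $\mu(\mathcal{W})\leq\mu(\pi_*\mathcal{V})$.

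For general quasi-\'etale $\pi$, pass to the Galois closure: choose $p:Z\to Y$ with $\tilde{\pi}:=\pi\circ p:Z\to X$ Galois, which is still quasi-\'etale because \'etale Galois closures of \'etale covers are \'etale. By \Cref{lemma-pullback-stable}(i), $p^{[*]}\mathcal{V}$ is semistable on $Z$, so the Galois case applied to $\tilde{\pi}$ shows that $\tilde{\pi}_*(p^{[*]}\mathcal{V})=\pi_*(p_*p^{[*]}\mathcal{V})$ is a semistable $F$-twisted reflexive sheaf of slope $\mu(\pi_*\mathcal{V})$. The unit $\mathcal{V}\hookrightarrow p_*p^{[*]}\mathcal{V}$ is injective (faithfully flat on the big \'etale locus of $p$; its kernel would sit in codimension $\geq 2$, contradicting reflexivity of $\mathcal{V}$), so $\pi_*\mathcal{V}$ embeds as an $F$-twisted subsheaf into the semistable $\pi_*p_*p^{[*]}\mathcal{V}$ of the same slope, and any $F$-twisted subsheaf of $\pi_*\mathcal{V}$ inherits the desired slope bound. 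The main technical hurdle will be verifying the Galois decomposition of $\pi^{[*]}\pi_*\mathcal{V}$ together with the required injectivity in the quasi-\'etale rather than \'etale setting---both of which rest on the bigness of the \'etale locus and the reflexivity of the sheaves involved.
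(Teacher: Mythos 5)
Your proof is correct, and while it ultimately rests on the same geometric fact as the paper's argument---that the relevant fibre products split into disjoint copies over the big \'etale locus---it is organized differently at two points. For the degree computation you use Grothendieck duality (equivalently, the trace pairing) to see that $\pi_*\mathcal{O}_Y$ is self-dual over the big \'etale locus, so its determinant is $2$-torsion and of degree $0$; the paper instead pulls $\pi_*\mathcal{O}_Y$ back along the Galois closure $Y'\to X$ and identifies it with $\mathcal{O}_{Y'}^{\oplus\deg\pi}$ there. For semistability the paper does the same thing uniformly: it shows $(\pi_*\mathcal{V})_{\mid Y'}$ is a direct sum of reflexive pullbacks of $\mathcal{V}$, all semistable of one slope, and then descends semistability in one stroke via \Cref{lemma-pullback-stable}(i). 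You instead prove the Galois case directly by testing saturated twisted subsheaves against the decomposition $\pi^{[\ast]}\pi_*\mathcal{V}\cong\bigoplus_{g}g^{\ast}\mathcal{V}$, and reduce the general case to it by embedding $\pi_*\mathcal{V}$ into $\tilde{\pi}_*p^{[\ast]}\mathcal{V}$ via the injective unit of adjunction and using that a twisted subsheaf of a semistable sheaf of the same slope is again semistable. Both reductions are valid: the paper's is a little shorter because \Cref{lemma-pullback-stable}(i) handles the descent, while your unit-embedding trick avoids identifying $(\pi_*\mathcal{V})_{\mid Y'}$ explicitly in the non-Galois case. The points you flag at the end (the Galois decomposition over a big open, injectivity of the unit) do go through exactly as you indicate, using bigness of the \'etale locus of $p$ (which follows since $Z\to X$ and $Y\to X$ are both \'etale over the same big open of $X$) and torsion-freeness of reflexive sheaves.
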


\begin{proof}
    Note that we can show slope $0=\mu (\pi_{\ast}\mathcal{O}_Y)$
    after pullback by a Galois cover.
    Let $\pi':Y'\xrightarrow{\varphi} Y\to X$ be the Galois closure of $\pi$. 
    Then $Y'\to X$ is still quasi-\'etale, 
    say both $\pi$ and $\pi'$ are \'etale when
    restricted to the big open $U\subset X$.
    By construction 
    $\pi'^{-1}(U)\times_U \pi^{-1}(U)$ is a disjoint union of $\deg(\pi)$
    copies of $\pi'^{-1}(U)$
    and we obtain $(\pi_{\ast}\mathcal{O}_Y)_{\mid \pi'^{-1}(U)}
    \cong \mathcal{O}_{\pi'^{-1}(U)}^{\oplus \deg(\pi)}$.
    Further, $\pi'^{-1}(U)\subset Y$ is a big open and we conclude that
    $(\pi_{\ast}\mathcal{O}_Y)_{\mid Y'}\cong \mathcal{O}_{Y'}^{\oplus \deg(\pi)}$
    which has slope $0$.
    
    By \Cref{lemma-pullback-stable} it suffices to check semistability
    of $\pi_{\ast}\mathcal{V}$ after pullback to $Y'$.
    Again, since $\pi'^{-1}(U)\times_U \pi^{-1}(U)$ 
    is a disjoint union of $\deg(\pi)$ copies of $\pi'^{-1}(U)$ 
    we obtain an isomorphism 
    $(\pi_{\ast}\mathcal{V})_{\mid \pi'^{-1}(U)}
    \cong\mathcal{V}_{\mid \pi'^{-1}(U)}^{\oplus \deg(\pi)}$.
    Moreover, $\pi'^{-1}(U)\subset Y$ is a big open and we conclude that
    $(\pi_{\ast}\mathcal{V})_{\mid Y'}\cong 
    \mathcal{V}^{\oplus \deg(\pi)}_{\mid Y'}$
    is a direct sum of semistable $F_{\mid Y'}$-twisted reflexive sheaves
    of the same slope and is thus semistable. 

    The claim regarding the slope of $\pi_{\ast}\mathcal{V}$ now follows
    immediately from \Cref{lemma-push-pull-slope}.
\end{proof}

Note that a cover need not be flat by definition.
Nonetheless, descent along a Galois cover of a saturated subsheaf
that descends can be phrased in terms of a linearization as in the flat case:

\begin{lemma}
\label{lemma-key}
    Let $\pi:Y\to X$ be a Galois cover of normal projective varieties with Galois group $G$. 
    Let $F$ be a vector bundle on $X$ and let $\mathcal{V}$ be a stable
    $F$-twisted reflexive sheaf. 
    Then $\mathcal{V}_{\mid Y}\cong \bigoplus_{i=1}^n \mathcal{W}_i^{\oplus e}$ 
    for some pairwise non-isomorphic stable $F_{\mid Y}$-twisted 
    reflexive sheaves $\mathcal{W}_i$ on $Y$ such that $G$ acts transitively on the isomorphism classes of the $\mathcal{W}_i$.
\end{lemma}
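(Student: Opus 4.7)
The plan is to first produce the polystable decomposition of $\mathcal{V}_{\mid Y}$, then use the Galois action to prove transitivity and equal multiplicities. Since $\mathcal{V}$ is stable it is trivially polystable, so by \Cref{lemma-pullback-stable}(ii) the reflexive pullback $\mathcal{V}_{\mid Y}$ is polystable. Grouping stable summands by isomorphism class, write
\[
\mathcal{V}_{\mid Y}\;\cong\;\bigoplus_{i=1}^{n}\mathcal{W}_i^{\oplus e_i}
\]
with pairwise non-isomorphic stable $F_{\mid Y}$-twisted reflexive sheaves $\mathcal{W}_i$, all of slope $\mu(\mathcal{V}_{\mid Y})=d\mu(\mathcal{V})$ by \Cref{lemma-push-pull-slope}(ii). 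Since \Cref{lemma-stable-simple} gives $\Hom_{F-TS}(\mathcal{W}_i,\mathcal{W}_j)=0$ for $i\neq j$, the isotypic decomposition is canonical as a decomposition of $F_{\mid Y}$-twisted sheaves.

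The $G$-linearization of $\mathcal{V}_{\mid Y}$ coming from descent to $\mathcal{V}$ is given by $F_{\mid Y}$-twisted isomorphisms $\phi_g\colon g^{[\ast]}\mathcal{V}_{\mid Y}\xrightarrow{\sim}\mathcal{V}_{\mid Y}$. Because the isotypic decomposition is canonical, each $\phi_g$ must carry the isotypic piece of $g^{[\ast]}\mathcal{W}_i$ onto the isotypic piece of some $\mathcal{W}_j$. This yields a permutation action of $G$ on the finite set $\{[\mathcal{W}_1],\dots,[\mathcal{W}_n]\}$ of isomorphism classes.

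Fix a $G$-orbit $O$ and set $\mathcal{U}_O:=\bigoplus_{[\mathcal{W}_i]\in O}\mathcal{W}_i^{\oplus e_i}$. By construction $\mathcal{U}_O\subset\mathcal{V}_{\mid Y}$ is a saturated $F_{\mid Y}$-twisted subsheaf that is $G$-invariant, so by \Cref{lemma-twisted-descend} it descends to a saturated $F$-twisted subsheaf $\mathcal{U}_O'\subset\mathcal{V}$. From $\mathcal{U}_O'{}_{\mid Y}=\mathcal{U}_O$ and \Cref{lemma-push-pull-slope}(ii) we compute $\mu(\mathcal{U}_O')=\mu(\mathcal{U}_O)/d=\mu(\mathcal{V})$. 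Stability of $\mathcal{V}$ now forces $\mathcal{U}_O'$ to have either zero rank or full rank; the latter combined with saturation gives $\mathcal{U}_O'=\mathcal{V}$, and hence $\mathcal{U}_O=\mathcal{V}_{\mid Y}$. Since $O$ was an arbitrary nonempty orbit, $G$ acts transitively on $\{[\mathcal{W}_1],\dots,[\mathcal{W}_n]\}$.

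Finally, for any $g\in G$ mapping $[\mathcal{W}_i]$ to $[\mathcal{W}_j]$, the linearization $\phi_g$ restricts to an isomorphism of isotypic components $(g^{[\ast]}\mathcal{W}_i)^{\oplus e_i}\cong\mathcal{W}_j^{\oplus e_i}\xrightarrow{\sim}\mathcal{W}_j^{\oplus e_j}$, and comparing multiplicities yields $e_i=e_j$. Thus all $e_i$ are equal to a common integer $e$, completing the proof. The main technical point is the descent step: we need that the canonical isotypic components, which are a priori only abstract summands of $\mathcal{V}_{\mid Y}$, actually sit inside $\mathcal{V}_{\mid Y}$ as saturated $G$-invariant subsheaves so that \Cref{lemma-twisted-descend} applies; this is ensured by the uniqueness provided by \Cref{lemma-stable-simple}.
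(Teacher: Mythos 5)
Your proof is correct and follows essentially the same strategy as the paper's: both descend a $G$-invariant saturated $F_{\mid Y}$-twisted subsheaf built from the Galois orbit of a stable summand (using \Cref{lemma-pullback-stable}, \Cref{lemma-twisted-descend} and \Cref{lemma-push-pull-slope}) and then invoke stability of $\mathcal{V}$ to force that subsheaf to be all of $\mathcal{V}_{\mid Y}$. The only cosmetic difference is that you organize the argument around the canonical isotypic components, whereas the paper takes the image of $\bigoplus_{\sigma\in G}\sigma^{\ast}\mathcal{W}_1\to\mathcal{V}_{\mid Y}$ and concludes via the uniqueness of the Jordan--H\"older graded object.
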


\begin{proof}
    By Lemma \ref{lemma-pullback-stable} (ii), $\mathcal{V}_{\mid Y}$ is polystable.
    Consider a stable direct summand $\iota:\mathcal{W}_1\subseteq\mathcal{V}_{\mid Y}$.
    Denote the image of the orbit of $\iota$ under the Galois action by $\mathcal{W}'$,
    i.e., $\mathcal{W}'$ is the image of 
    \[
        \psi: \bigoplus_{\sigma\in G}\sigma^{\ast} \mathcal{W}_1\xrightarrow{\oplus \varphi_{\sigma}\circ \sigma^{\ast}\iota}\mathcal{V}_{\mid Y},
    \]
    where $\varphi_{\sigma}:\sigma^{\ast}\mathcal{V}_{\mid Y}\to\mathcal{V}_{\mid Y}$
    is the $G$-linearization induced by $\mathcal{V}$.
    Note that $\varphi_{\sigma}$ is compatible with the $F_{\mid Y}$-twisted structure on $\mathcal{V}_{\mid Y}$. Furthermore, as the direct summand 
    $\iota: \mathcal{W}_1\subseteq\mathcal{V}_{\mid Y}$
    is an $F_{\mid Y}$-twisted subsheaf, then so is $\mathcal{W}'\subseteq\mathcal{V}_{\mid Y}$.
    By construction $\mathcal{W}'\subseteq \mathcal{V}_{\mid Y}$ inherits the $G$-linearization of $\mathcal{V}$ making it an $F_{\mid Y}$-twisted subsheaf. As $\mathcal{W}'$ is the image of a morphism
    of semistable $F_{\mid Y}$-twisted sheaves of the same slope, it is semistable of the same slope as well.
    Thus, there exists an $F$-twisted subsheaf $\mathcal{W}\subseteq \mathcal{V}$
    pulling back to $\mathcal{W}'\subseteq\mathcal{V}_{\mid Y}$. As $\mathcal{W}$ has the same slope as $\mathcal{V}$, we conclude that $\mathcal{W}=\mathcal{V}$, i.e., $\psi$ is surjective.

    As the associated graded object of the JH-filtration is unique, the surjectivity of $\psi$ implies that every stable direct summand of the polystable
    $F_{\mid Y}$-twisted sheaf is of the form $\sigma^{\ast}\mathcal{W}_1$
    for some $\sigma\in G$. Furthermore, from the transitive action of $G$
    we conclude that all direct summands $\mathcal{W}_i$ appear with the same multiplicity.
\end{proof}

\section{Genuinely ramified morphisms and stability}

The goal of this section is to prove several generalizations of 
\cite[Theorem 1.2]{bdp}:
pullback by a genuinely ramified cover $Y\to X$ 
preserves stability of twisted bundles and
reflexive pullback by a genuinely ramified cover with 
smooth target preserves stability of reflexive sheaves. 
We also study stability under reflexive pullback
in the normal setting and obtain a similar criterion 
for when such a pullback preserves stability of
reflexive sheaves.

Recall that a cover $Y\to X$ of normal projective varieties
is genuinely ramified if and only if one of the following 
is satisfied, see \cite[Theorem 1.1]{bdp}:
\begin{itemize}
    \item the cover $Y\to X$ does not factor via a non-trivial \'etale
    cover $X'\to X$.
    \item the induced morphism 
    $\pi_1^{\et}(Y)\to\pi_1^{\et}(X)$ is surjective.
    \item the fibre product $Y\times_X Y$ is connected.
\end{itemize}

Our first main result is an analogous characterization
of covers that do not factor via a non-trivial quasi-\'etale cover.
We call such covers \emph{genuinely ramified in codimension 1}
(\Cref{def: genuinely ramified in codim}).

\begin{theorem}
\label{thm:bdp analogue surjective on fundamental group}
    Let $\pi:Y\to X$ be a Galois cover of normal varieties.
    Then the following are equivalent:
    \begin{enumerate}[(i)]
        \item The cover $Y\to X$ is genuinely ramified in codimension $1$.
        \item The self-intersection $Y\times_X Y$ is connected in codimension $1$,
        see \Cref{definition-connected-in-codim}.
        \item The induced morphism $\pi_\ast:\pi_1^{\et}(Y_{sm})\to \pi_1^{\et}(X_{sm})$ is surjective, where $(-)_{sm}$ denotes the smooth locus,
        see \Cref{lemma:induced morphism etale fundamental group smooth loci}
        for the construction of the morphism.
    \end{enumerate}
\end{theorem}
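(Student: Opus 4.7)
The plan is to obtain the equivalences from two ingredients: the Galois correspondence for quasi-\'etale covers (for (i) $\Leftrightarrow$ (iii)) and a direct analysis of the irreducible components of $Y \times_X Y$ under the $G \times G$-action (for (i) $\Leftrightarrow$ (ii)).

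For (i) $\Leftrightarrow$ (iii), I would invoke the standard equivalence between finite \'etale covers of $X_{sm}$ and quasi-\'etale covers of the normal projective variety $X$, where one direction is restriction to the smooth locus and the other is normalization of $X$ in the function field of the \'etale cover (well-defined since $X \setminus X_{sm}$ has codimension $\geq 2$). Under this equivalence, $\pi_1^{\et}(X_{sm})$ classifies connected pointed quasi-\'etale covers of $X$, and a factorization $Y \to X' \to X$ with $X' \to X$ quasi-\'etale corresponds to an open subgroup of $\pi_1^{\et}(X_{sm})$ containing the image of $\pi_*$. Non-triviality of the factorization is precisely properness of this subgroup, so (i) holds iff no proper open subgroup contains $\im(\pi_*)$; since $\im(\pi_*)$ is closed of finite index and therefore open, this is equivalent to surjectivity of $\pi_*$.

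For (i) $\Leftrightarrow$ (ii) I would unpack the structure of $Y \times_X Y$ using the Galois action. The morphism $Y \times G \to Y \times_X Y$ given by $(y, g) \mapsto (y, g \cdot y)$ is a finite surjection that is an isomorphism over the \'etale locus of $\pi$, so the irreducible components of $Y \times_X Y$ are exactly the closures $\overline{\Gamma_g}$ of the graphs of $g \in G$. Define a relation $\sim$ on $G$ by $g \sim h$ iff $\overline{\Gamma_g}$ and $\overline{\Gamma_h}$ lie in the same codimension-$1$ connected component. Compatibility with the left and right $G$-actions on $Y \times_X Y$ forces $\sim$ to be a congruence, so $H := \{g \in G : g \sim e\}$ is a normal subgroup of $G$, and (ii) amounts exactly to $H = G$. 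I would then verify that the intermediate cover $X' := Y/H \to X$ is quasi-\'etale: if $g \in G$ lies in the inertia group of a codimension-$1$ prime divisor $D \subset Y$, the graph $\overline{\Gamma_g}$ meets $\overline{\Gamma_e}$ along the diagonal image of $D$, which has codimension $1$ in $Y \times_X Y$, forcing $g \in H$. Thus all codimension-$1$ inertia elements of $G$ lie in $H$, so $X' \to X$ is unramified in codimension $1$; combined with flatness of $X' \to X$ over $X_{sm}$ (miracle flatness for finite maps of normal Cohen--Macaulay schemes) this gives \'etaleness on a big open, i.e., quasi-\'etaleness. Conversely, any non-trivial quasi-\'etale factorization $Y \to X'' \to X$ produces a codimension-$1$ disconnection of $X'' \times_X X''$ that pulls back to a codimension-$1$ disconnection of $Y \times_X Y$. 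Combining these shows (ii) fails iff $H \neq G$ iff $X' \to X$ is a non-trivial quasi-\'etale factorization iff (i) fails.

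The main technical obstacle I anticipate is the scheme-theoretic bookkeeping behind the claim that, for $g$ in the inertia of a codimension-$1$ divisor $D \subset Y$, the components $\overline{\Gamma_e}$ and $\overline{\Gamma_g}$ really do meet in codimension $1$ inside $Y \times_X Y$. This requires careful treatment of the non-flat locus of $\pi$ and a precise description of the graphs $\overline{\Gamma_g}$ at ramification points, working in codimension $1$ and using purity on the smooth locus to bridge local and global statements. The remaining steps are natural analogues of the \'etale case treated in \cite{bdp}, with ``\'etale'' systematically replaced by ``quasi-\'etale'' and $\pi_1^{\et}(X)$ by $\pi_1^{\et}(X_{sm})$.
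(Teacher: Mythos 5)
Your proposal is correct and follows essentially the same route as the paper: the subgroup $H$ you extract from the codimension-$1$ incidence relation on the components $\overline{\Gamma_g}$ is exactly the paper's first connecting group $G_1$ (Definition \ref{definition-connected-in-codim}), the equivalence of (i) and (ii) is proved in both cases by showing that $Y/H\to X$ is quasi-\'etale precisely because all codimension-$1$ inertia lies in $H$ (Lemma \ref{lemma-connecting-group}), and the equivalence with (iii) uses the same identification of $\pi_1^{\et}(X_{sm})$ with the Galois group of quasi-\'etale covers (Lemma \ref{lemma:quasi-etale smooth locus}). The only slips are cosmetic: the appeal to miracle flatness is unnecessary and not literally applicable (a normal variety need not be Cohen--Macaulay), since flatness at codimension-$1$ points is automatic over the DVRs supplied by normality; and the finite index of $\im(\pi_*)$, while true, is better replaced by the standard fact that a proper closed subgroup of a profinite group lies in a proper open subgroup.
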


Our techniques rely on the symmetries
obtained from the action of the Galois group of $Y/X$ on $Y\times_X Y$
and thus differ from the approach in \cite{bp}.

Using this characterization, we obtain that covers genuinely ramified 
in codimension $1$ preserve stability of reflexive twisted sheaves under pullback:

\begin{theorem}
\label{thm:bdp analogue stable}
    Let $\pi:Y\to X$ be a genuinely ramified morphism of normal projective varieties.
    Let $F$ be a vector bundle on $X$. Then an $F$-twisted bundle $(\mathcal{V},\theta)$ on $X$ is stable 
    iff $(\mathcal{V}_{\mid Y},\theta_{\mid Y})$ is stable
    as an $F_{\mid Y}$-twisted bundle.
    
    Furthermore, if $Y\to X$ is genuinely ramified in codimension $1$, 
    then an $F$-twisted reflexive sheaf $(\mathcal{V},\theta)$ on $X$ 
    is stable if and only if $(\mathcal{V}_{\mid Y},\theta_{\mid Y})$ 
    is stable as an $F_{\mid Y}$-twisted reflexive sheaf.
\end{theorem}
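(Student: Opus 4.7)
The approach is to follow the Galois-descent strategy of \cite{bdp}, adapted to $F$-twisted reflexive sheaves and to the codimension-$1$ version of genuine ramification. The main inputs are the characterization from \Cref{thm:bdp analogue surjective on fundamental group}, together with the structural \Cref{lemma-key} and the descent result \Cref{lemma-twisted-descend}.

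I first reduce to the case where $\pi$ is Galois by passing to its Galois closure: both the non-factorization hypothesis and the pullback-stability question descend, using \Cref{lemma-push-pull-slope} for the slope bookkeeping. The easy direction is then immediate, since any proper saturated $F$-twisted subsheaf $\mathcal{W}\subsetneq\mathcal{V}$ pulls back reflexively to a proper saturated $F_{\mid Y}$-twisted subsheaf of $\mathcal{V}_{\mid Y}$ of the same rank, with slopes uniformly scaled by $d$.

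For the converse, assume $(\mathcal{V},\theta)$ is stable and suppose for contradiction that $(\mathcal{V}_{\mid Y},\theta_{\mid Y})$ is not. Since $\mathcal V$ is in particular polystable, \Cref{lemma-pullback-stable}(ii) yields polystability of $\mathcal{V}_{\mid Y}$, and \Cref{lemma-key} produces a decomposition
\[
\mathcal{V}_{\mid Y}\cong\bigoplus_{i=1}^n\mathcal{W}_i^{\oplus e}
\]
with $G:=\gal(Y/X)$ acting transitively on the isomorphism classes $\{[\mathcal{W}_i]\}$ and $ne>1$. On one hand, \Cref{lemma-stable-simple} gives $\dim_k\End_{F_{\mid Y}-TS}(\mathcal{V}_{\mid Y})=ne^2>1$; on the other hand, adjunction and the projection formula (on the big open where $\pi$ is flat) yield
\[
\End_{F_{\mid Y}-TS}(\mathcal{V}_{\mid Y})\cong\Hom_{F-TS}(\mathcal{V},\mathcal{V}\otimes\pi_{\ast}\mathcal{O}_Y).
\]
The plan is then to show, using \Cref{thm:bdp analogue surjective on fundamental group}, that under the genuine ramification hypothesis the unique maximal slope-$0$ subsheaf of $\pi_{\ast}\mathcal{O}_Y$ is $\mathcal{O}_X$, so every $F$-twisted morphism $\mathcal{V}\to\mathcal{V}\otimes\pi_{\ast}\mathcal{O}_Y$ factors through $\mathcal{V}\otimes\mathcal{O}_X=\mathcal{V}$; by simplicity of $\mathcal{V}$ (\Cref{lemma-stable-simple}), the target Hom space is $1$-dimensional, contradicting $ne^2>1$.

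\emph{Main obstacle.} The central technical difficulty is the reflexive/codimension-$1$ refinement: since $\pi$ is not assumed flat and $\mathcal{V}$ is only reflexive, base change, the projection formula, and the identification of $G$-invariants are available only on big opens, and under the weaker quasi-\'etale hypothesis $Y\times_X Y$ is only connected in codimension $1$ rather than topologically connected. Verifying the key structural statement about $\pi_{\ast}\mathcal{O}_Y$ in this setting requires tracing $G$-invariant sections through the picture on $Y\times_X Y$ provided by \Cref{thm:bdp analogue surjective on fundamental group}, and using \Cref{lemma-twisted-descend} together with Hartogs extensions to transport the $F$-twisted structure across all descent steps, even when $Y\times_X Y$ is reducible or non-reduced.
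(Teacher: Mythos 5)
Your plan for the Galois case is essentially the paper's: there one shows (Lemma \ref{lemma-homs}, via Lemma \ref{lemma-mod-torsion}) that $\Hom_{F-TS}(\mathcal{V},\mathcal{V}\otimes\pi_{\ast}\mathcal{O}_Y)\cong\Hom_{F-TS}(\mathcal{V},\mathcal{V})=k$, because $\pi^{\ast}\bigl((\pi_{\ast}\mathcal{O}_Y)/\mathcal{O}_X\bigr)$ modulo torsion embeds into a sum of ideal sheaves $\mathcal{L}_i\subsetneq\mathcal{O}_Y$ with codimension-$1$ cosupport, so a semistable sheaf of slope $\mu(\mathcal{V}_{\mid Y})$ admits no nonzero map into $\mathcal{V}_{\mid Y}\otimes\mathcal{L}_i$. (Two small caveats: working with the rank-one sheaves $\mathcal{L}_i$ on $Y$ rather than with ``the maximal slope-$0$ subsheaf of $\pi_{\ast}\mathcal{O}_Y$'' on $X$ is what lets one avoid any statement about $\mu_{\max}$ of a tensor product, which is delicate in characteristic $p$; and for the first half of the theorem, where $\pi$ is only genuinely ramified, the $\mathcal{L}_i$ are merely proper subsheaves and one instead uses that a nonzero map of stable twisted bundles of equal slope is an isomorphism, hence cannot factor through $\mathcal{V}_{\mid Y}\otimes\mathcal{L}_i$.)

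The genuine gap is your first step: the reduction to the Galois case by passing to the Galois closure. The hypothesis does \emph{not} descend to the Galois closure. If $Y\to X$ is genuinely ramified in codimension $1$ and $Z\to X$ is its Galois closure, $Z\to X$ may well factor through a nontrivial quasi-\'etale (even \'etale) cover $X'\to X$; the point is only that $Y\to X$ itself does not factor through $X'\to X$. (Concretely: an $S_3$-cover $Z\to X$ of curves whose inertia groups are generated by $3$-cycles gives a genuinely ramified degree-$3$ cover $Y=Z/S_2\to X$ whose Galois closure $Z$ factors through the \'etale double cover $Z/A_3\to X$.) So you cannot apply the Galois case to $Z\to X$, and your argument never engages with the actual difficulty of the non-Galois case. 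The paper instead factors $Z\to X'\to X$ with $X'\to X$ the maximal quasi-\'etale (Galois) part and $Z\to X'$ genuinely ramified in codimension $1$; on $X'$ the sheaf $\mathcal{V}_{\mid X'}$ is only polystable, and one shows a saturated stable subsheaf $\mathcal{W}\subset\mathcal{V}_{\mid Y}$ of the same slope must, after pullback to $Y':=(Y\times_X X')^{\nu}_{red}$, contain a full $\gal(X'/X)$-orbit of stable summands; the image of that orbit is $G$-invariant and descends to $X$ (as in Lemma \ref{lemma-key}), forcing $\mathcal{W}=\mathcal{V}_{\mid Y}$ by stability of $\mathcal{V}$. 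Some argument of this kind, using that $Y\to X$ itself does not factor through $X'\to X$, is unavoidable and is missing from your proposal.
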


\subsection{\'Etale fundamental group of the smooth locus}

Let $X$ be a normal variety. In this subsection,
we study the \'etale fundamental group of the smooth locus of $X$.
Although all the results should be well-known, 
we could not find them in the literature and include the short arguments.

We provide an interpretation of the \'etale fundamental
group $\pi_1^{\et}(X_{sm})$ of the smooth locus $X_{sm}$ of $X$
via quasi-\'etale covers of $X$.
Given a cover $Y\to X$ we obtain an induced morphism
$\pi_1^{\et}(Y_{sm})\to \pi_1^{\et}(X_{sm})$.
Furthermore, if $X$ is also projective, then
$\pi_1^{\et}(X_{sm})$ is topologically finitely generated.
In particular, there are only finitely many quasi-\'etale covers
of $X$ of fixed degree if $X$ is a normal projective variety.

\begin{lemma}
    \label{lemma:quasi-etale smooth locus}
    Let $X$ be a normal variety.
    Then quasi-\'etale covers of $X$ are in a 1:1 correspondence to
    \'etale covers of the smooth locus $X_{sm}$ of $X$.
\end{lemma}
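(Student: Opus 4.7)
The plan is to exhibit mutually inverse constructions between the two classes: restriction to $X_{sm}$ in one direction and normalization of $X$ in a suitable function field in the other. The decisive input will be Zariski--Nagata purity of the branch locus, which is what forces restriction to actually land in \'etale covers of $X_{sm}$ (rather than merely of the a priori smaller \'etale locus). Note that since $X$ is normal, $X_{sm}\subset X$ is a big open subvariety, so both restriction and the normalization extension behave well with respect to function fields.

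\emph{Restriction.} Given a quasi-\'etale cover $\pi:Y\to X$, I would form $\pi^{-1}(X_{sm})\to X_{sm}$. By hypothesis $\pi$ is \'etale over some big open $V\subset X$, hence over $V\cap X_{sm}$, whose complement in the regular scheme $X_{sm}$ still has codimension at least $2$. Since $Y$ is normal and $X_{sm}$ is regular, Zariski--Nagata purity forces the branch locus of $\pi^{-1}(X_{sm})\to X_{sm}$ to be either empty or pure of codimension $1$; being contained in a codimension $\ge 2$ set, it must be empty. Thus $\pi$ is \'etale over all of $X_{sm}$, giving a finite \'etale cover of $X_{sm}$.

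\emph{Normalization.} Conversely, given an \'etale cover $U\to X_{sm}$ with $U$ connected (the disconnected case will be handled componentwise), $U$ is a normal integral scheme, and $K(U)/K(X_{sm})=K(U)/K(X)$ is a finite separable extension. I would take $Y$ to be the normalization of $X$ in $K(U)$: this is a normal integral scheme, finite over $X$, hence a cover. To check it is quasi-\'etale, observe that $\pi^{-1}(X_{sm})$ is the normalization of $X_{sm}$ in $K(U)$; but $U$ itself is already a normal integral scheme finite over $X_{sm}$ with function field $K(U)$, so by uniqueness of normalization $\pi^{-1}(X_{sm})\cong U$ over $X_{sm}$. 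Thus $\pi$ is \'etale over the big open $X_{sm}$, i.e.\ quasi-\'etale.

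\emph{Mutual inverses.} The two constructions are inverse to each other: starting from $Y\to X$ quasi-\'etale, setting $U:=\pi^{-1}(X_{sm})$ and normalizing $X$ in $K(U)=K(Y)$ recovers $Y$ by uniqueness of normalization (as $Y$ is normal and has function field $K(U)$); starting from $U\to X_{sm}$, restriction recovers $U$ by the computation above. For disconnected $U\to X_{sm}$ the assignment should be interpreted componentwise, yielding the corresponding disjoint union of (irreducible) quasi-\'etale covers of $X$. The only real ingredient beyond formal manipulations with normalizations is Zariski--Nagata purity, and the reason it applies here is precisely that $X_{sm}$ is regular; this is exactly why the lemma singles out the smooth locus rather than an arbitrary big open.
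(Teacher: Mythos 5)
Your proof is correct and follows essentially the same route as the paper's: restriction to $X_{sm}$ combined with Zariski--Nagata purity in one direction, normalization of $X$ in the function field in the other, with uniqueness of normalization giving that the constructions are mutually inverse. You simply spell out the details (the codimension bookkeeping for the branch locus and the identification $\pi^{-1}(X_{sm})\cong U$) that the paper leaves implicit.
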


\begin{proof}
    Given an \'etale cover $Y\to X_{sm}$ we obtain
    a quasi-\'etale cover
    $Y'\to X$, where $Y'$ denotes the normal closure of $X$
    in the function field of $Y$.
    
    Conversely, given a quasi-\'etale cover $Y'\to X$,
    we obtain a quasi-\'etale cover of $Y\to X_{sm}$ by restriction.
    By purity of branch locus, 
    \cite[\href{https://stacks.math.columbia.edu/tag/0BMB}{Tag 0BMB}]{sp},
    we find that $Y\to X_{sm}$ is an \'etale cover.

    It is straightforward to check that these constructions are inverse to 
    each other.
\end{proof}

\begin{lemma}
    Let $X$ be a normal projective variety.
    Then the \'etale fundamental group of the smooth
    locus $X_{sm}$ of $X$ is topologically finitely generated.
\end{lemma}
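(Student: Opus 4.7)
The plan is to reduce, via de Jong's theorem on alterations, to the classical topological finite generation of $\pi_1^{\et}$ for smooth projective varieties, while establishing a uniform generator bound on all finite continuous quotients of $\pi_1^{\et}(X_{sm})$. Recall that a profinite group is topologically finitely generated iff there is an integer $d$ bounding the number of generators of every finite continuous quotient. By \Cref{lemma:quasi-etale smooth locus}, such quotients correspond to Galois quasi-\'etale covers $Y\to X$, so the goal reduces to uniformly bounding $d(\gal(Y/X))$ as $Y$ varies.

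First, I would choose an alteration $f:\tilde X\to X$ with $\tilde X$ smooth projective, and set $e:=\deg f$. For a Galois quasi-\'etale cover $\pi:Y\to X$ of group $H$, \'etale over a big open $U\subseteq X$, the preimage $f^{-1}(U)\subseteq\tilde X$ is still big since $f$ is generically finite. A connected component $\tilde Y$ of the normalization of $Y\times_X\tilde X$ is therefore a cover of $\tilde X$ that is \'etale in codimension one. Zariski--Nagata purity of the branch locus on the regular scheme $\tilde X$ then promotes this to an honest \'etale cover. A short function-field computation, using that $\kappa(Y)/\kappa(X)$ is separable (as $Y\to X$ is a cover in the paper's sense), shows that $\tilde Y\to\tilde X$ is Galois with group $H_1\subseteq H$ equal to the stabilizer of $\tilde Y$ under the $H$-action on components, with index $[H:H_1]\le e$.

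Finally, since $\tilde X$ is smooth projective, $\pi_1^{\et}(\tilde X)$ is topologically finitely generated by some $d_0$ elements (SGA~1, Expos\'e~X), whence $d(H_1)\le d_0$. Combining, $d(H)\le d_0+e-1$ by taking generators of $H_1$ together with coset representatives; this bound is independent of $Y$, so by a standard inverse-limit compactness argument it transfers to a topological finite generation statement for $\pi_1^{\et}(X_{sm})$. The main technical obstacle will be establishing the index bound $[H:H_1]\le e$ in positive characteristic, where $\kappa(\tilde X)/\kappa(X)$ may fail to be separable; the key observation is that $\kappa(Y)\otimes_{\kappa(X)}\kappa(\tilde X)$ is nevertheless an \'etale $\kappa(\tilde X)$-algebra, whose decomposition into field factors is controlled by the separable subextension of $\kappa(\tilde X)/\kappa(X)$, which has degree at most $e$.
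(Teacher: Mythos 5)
There is a genuine gap at the pivotal step. You claim that for an alteration $f:\tilde X\to X$ of generic degree $e$ and a big open $U\subseteq X$, the preimage $f^{-1}(U)$ is again big ``since $f$ is generically finite''. This is false: an alteration is only \emph{generically} finite, and it may contract divisors of $\tilde X$ onto the codimension-$\ge 2$ locus $X\setminus U$ (already a resolution of a surface singularity does this). Consequently $\tilde Y\to\tilde X$ is only known to be \'etale over $f^{-1}(U)$, whose complement may contain divisors, and Zariski--Nagata purity gives you nothing: the branch divisor of $\tilde Y\to\tilde X$ can be exactly the exceptional locus. A concrete counterexample to your mechanism: let $A$ be an abelian surface, $X=A/\{\pm1\}$, $Y=A\to X$ the quasi-\'etale Galois cover with $H=\mathbb{Z}/2$, and $\tilde X=\mathrm{Km}(A)$ the minimal resolution, an alteration of degree $e=1$ with $\pi_1^{\et}(\tilde X)=1$. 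Here $\tilde Y$ is the blow-up of $A$ at the $16$ two-torsion points, and $\tilde Y\to\tilde X$ is a double cover branched along the sixteen $(-2)$-curves, not \'etale; your bound would give $d(H)\le d_0+e-1=0$, which is absurd. The underlying issue is that $\pi_1^{\et}(X_{sm})$ genuinely sees the local fundamental groups of the singularities, which a smooth alteration cannot, so no argument routing through $\pi_1^{\et}(\tilde X)$ alone (even up to bounded index) can work. Your reduction to a uniform generator bound on finite continuous quotients, and the arithmetic of cosets, are fine, but the core geometric input is broken.

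The paper avoids this by going down in dimension rather than up to a smooth model: it takes a general complete intersection curve $C$ of large degree, which by the normal Bertini theorem can be chosen inside $X_{sm}$ (the non-smooth locus has codimension $\ge2$, so general hyperplanes miss enough of it), and uses Jouanolou's irreducibility theorem to see that $C\times_XY$ is irreducible for every quasi-\'etale cover $Y\to X$; this yields a surjection $\pi_1^{\et}(C)\twoheadrightarrow\pi_1^{\et}(X_{sm})$, and one concludes from topological finite generation of $\pi_1^{\et}$ of a smooth projective curve. If you want to salvage an alteration-style argument, you would at minimum need to control the covers ramified along the exceptional divisors, e.g.\ via finite generation of local (link) fundamental groups, which is a substantially harder input than the Lefschetz-type argument the paper uses.
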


\begin{proof}
    We interpret \'etale covers of $X_{sm}$ as quasi-\'etale covers of $X$
    via \Cref{lemma:quasi-etale smooth locus}.
    
    Note that the general complete intersection curve of large enough degree
    in $X$ also lies in the smooth locus
    as the smooth locus is a big open in $X$.
    Indeed, for $N\gg 0$, the general hyperplane $H$ induced by a section of
    $H^0(X,\mathcal{O}_X(N))$ intersects the locus $X\setminus X_{sm}$ transversally.
    Furthermore, the general such $H$ is also a normal projective variety by 
    the normal version of Bertini's theorem, see \cite[Theorem 7 and 7']{sei}.

    Let $C\hookrightarrow X$ be such a curve.
    Let $Y\to X$ be a quasi-\'etale cover.
    Note that by \cite[Corollaire 6.11 (3)]{jou} we have
    that $C\times_X Y$ is irreducible.
    Thus, we obtain the surjectivity of 
    $\pi_1^{\et}(C)\to \pi_1^{\et}(X_{sm})$
    by \cite[\href{https://stacks.math.columbia.edu/tag/0BN6}{Tag 0BN6}]{sp},
    and conclude by the well-known fact that $\pi_1^{\et}(C)$ is topologically
    finitely generated.
\end{proof}

As a topologically finite generated group has only finitely many homomorphisms
to a finite group we obtain:

\begin{cor}
    \label{cor: quasi-etale of fixed degree}
    Let $X$ be a normal projective variety.
    Then $X$ has only finitely many quasi-\'etale covers of fixed degree.
\end{cor}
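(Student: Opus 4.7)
The plan is to directly combine the two preceding lemmas with a standard finiteness argument for continuous homomorphisms out of topologically finitely generated profinite groups. First, I would use \Cref{lemma:quasi-etale smooth locus} to translate the problem: quasi-\'etale covers of $X$ of degree $n$ are in bijection with \'etale covers of the smooth locus $X_{sm}$ of degree $n$. So it suffices to prove that $X_{sm}$ admits only finitely many \'etale covers of each fixed degree.

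Next, I would invoke the standard Galois-theoretic description of the \'etale fundamental group. After fixing a geometric base point of $X_{sm}$, isomorphism classes of degree $n$ \'etale covers of $X_{sm}$ correspond to finite continuous $\pi_1^{\et}(X_{sm})$-sets of cardinality $n$, and hence to conjugacy classes of continuous homomorphisms $\pi_1^{\et}(X_{sm}) \to S_n$. Thus the desired finiteness reduces to bounding the number of such homomorphisms.

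The key observation is elementary: a topologically finitely generated profinite group $\Gamma$ admits only finitely many continuous homomorphisms to any fixed finite group $G$. Indeed, if $g_1,\dots,g_r$ topologically generate $\Gamma$, then any continuous homomorphism $\Gamma\to G$ is uniquely determined by the tuple of images $(\varphi(g_1),\dots,\varphi(g_r))\in G^r$, giving at most $|G|^r$ possibilities. By the previous lemma, $\pi_1^{\et}(X_{sm})$ is topologically finitely generated, so applying this to $G=S_n$ yields the finiteness of the set of \'etale covers of $X_{sm}$ of degree $n$, and therefore of quasi-\'etale covers of $X$ of degree $n$.

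I do not anticipate any serious obstacle; the argument is essentially a bookkeeping combination of the two preceding lemmas, as flagged by the comment preceding the corollary. The only mild point to be careful about is that one classifies isomorphism classes of covers by conjugacy classes of homomorphisms to $S_n$ rather than by the homomorphisms themselves, but this passage to conjugacy classes can only decrease the count and so does not affect finiteness.
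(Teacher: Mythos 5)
Your proof is correct and follows exactly the route the paper intends: the corollary is stated as an immediate consequence of \Cref{lemma:quasi-etale smooth locus} and the topological finite generation of $\pi_1^{\et}(X_{sm})$, via the standard fact that a topologically finitely generated profinite group has only finitely many continuous homomorphisms to a fixed finite group. You have merely written out the details (passing to homomorphisms into $S_n$ up to conjugacy) that the paper leaves implicit.
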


\begin{lemma}
    \label{lemma:induced morphism etale fundamental group smooth loci}
    Let $\pi:Y\to X$ be a cover of a normal variety $X$.
    Let $y$ be a closed point in the smooth locus of $Y$
    such that $x:=\pi(y)$ lies in the smooth locus of $X$.
    Then we obtain an induced morphism 
    $\pi_{\ast}:\pi_1^{\et}(Y_{sm},y)\to \pi_1^{\et}(X_{sm},x)$.   
\end{lemma}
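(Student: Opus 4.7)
The plan is to construct the morphism as a composition, using the open subset $U := \pi^{-1}(X_{sm}) \cap Y_{sm}$ of $Y_{sm}$ as an intermediate space on which $\pi$ restricts to a well-defined morphism into $X_{sm}$.

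First I would check that $U$ is a big open in $Y_{sm}$ containing $y$. Since $\pi$ is a finite morphism of varieties of the same dimension (covers are finite and surjective between normal varieties), and the singular locus $X \setminus X_{sm}$ is closed of codimension at least $2$ in $X$, its preimage $\pi^{-1}(X \setminus X_{sm})$ is closed of codimension at least $2$ in $Y$ by the dimension formula for finite morphisms. Intersecting with the open $Y_{sm}$ preserves this codimension bound, so $Y_{sm} \setminus U$ has codimension at least $2$ in $Y_{sm}$. The hypothesis $x = \pi(y) \in X_{sm}$ together with $y \in Y_{sm}$ guarantees $y \in U$.

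Next, the restriction $\pi|_U : U \to X_{sm}$ is a morphism of schemes with $\pi|_U(y) = x$, so functoriality of the \'etale fundamental group yields a homomorphism
\[
(\pi|_U)_\ast : \pi_1^{\et}(U, y) \longrightarrow \pi_1^{\et}(X_{sm}, x).
\]
It remains to identify $\pi_1^{\et}(U, y)$ with $\pi_1^{\et}(Y_{sm}, y)$. For this I would invoke Zariski--Nagata purity of the branch locus applied to the regular scheme $Y_{sm}$: any finite \'etale cover of the big open $U \subset Y_{sm}$ extends to a quasi-\'etale cover of $Y_{sm}$ by taking the normalization in its function field, and purity forces this extension to be \'etale over all of $Y_{sm}$. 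Combined with the obvious fact that restriction of \'etale covers to a schematically dense open is fully faithful, this yields an equivalence of Galois categories between finite \'etale covers of $Y_{sm}$ and of $U$, hence an isomorphism $\pi_1^{\et}(U, y) \xrightarrow{\sim} \pi_1^{\et}(Y_{sm}, y)$. Composing the inverse of this isomorphism with $(\pi|_U)_\ast$ produces the desired $\pi_\ast$.

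The one subtle point is the verification that $U$ is big in $Y_{sm}$, which relies on $\pi$ being finite so that codimensions of closed subsets are preserved under preimage; beyond that the argument is purely formal, since once purity is available on the regular scheme $Y_{sm}$, the construction is a standard composition of the restriction to a big open with the functorial map on fundamental groups.
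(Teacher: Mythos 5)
Your argument is correct. It differs from the paper's proof mainly in packaging: the paper never introduces the intermediate open $U=\pi^{-1}(X_{sm})\cap Y_{sm}$, but instead builds the functor on Galois categories directly, reinterpreting an \'etale cover of $X_{sm}$ as a quasi-\'etale cover $X'\to X$ (via Lemma \ref{lemma:quasi-etale smooth locus}), forming the normalization of $(Y\times_X X')_{red}$ to obtain a disjoint union of quasi-\'etale covers of $Y$, and reading these back as \'etale covers of $Y_{sm}$. You instead restrict $\pi$ to $U\to X_{sm}$, apply ordinary functoriality of $\pi_1^{\et}$, and then identify $\pi_1^{\et}(U,y)\cong\pi_1^{\et}(Y_{sm},y)$ by Zariski--Nagata purity on the regular scheme $Y_{sm}$. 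Unwinding both constructions, the resulting functors on finite \'etale covers agree over the big open $U$ and hence coincide, so the two induced homomorphisms are the same. What your route buys is that the key codimension count --- that $\pi^{-1}(X\setminus X_{sm})$ has codimension $\geq 2$ in $Y$ because $\pi$ is finite and $X$ is normal --- is made explicit, whereas the paper leaves it implicit in the claim that the normalized fiber product is quasi-\'etale over $Y$; what the paper's route buys is that it stays entirely within the dictionary of Lemma \ref{lemma:quasi-etale smooth locus}, which is reused elsewhere. One small caution: full faithfulness of restriction of finite \'etale covers holds here because $U$ is a \emph{big} open of the irreducible normal scheme $Y_{sm}$ (so morphisms extend by normality and clopen decompositions match up); for a merely schematically dense open this step would need more care, so you should phrase that justification in terms of bigness and normality rather than density alone.
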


\begin{proof}
    We interpret an \'etale cover of the smooth locus
    with quasi-\'etale cover via \Cref{lemma:quasi-etale smooth locus}. 
    
    Let $X'\to X$ be a quasi-\'etale cover.
    Taking the normalization of the fibre product $(Y\times_X X')_{red}$
    yields a disjoint union
    of quasi-\'etale covers on $Y$ which we interpret as
    a disjoint union of \'etale covers of $Y_{sm}$.
    This assignment is functorial
    for morphisms of \'etale covers of $X_{sm}$
    and commutes with the fiber functors at $x$ resp. $y$.
    We obtain the desired morphism of \'etale fundamental groups.
\end{proof}

\begin{remark}
    As a cover $Y\to X$ is generically \'etale,
    there exist $k$-points in the smooth locus of $Y$
    mapping to the smooth locus  of $X$.
    Further, the \'etale fundamental group
    does not depend on the choice of base point and
    we denote the morphism constructed
    by $\pi_1^{\et}(Y_{sm})\to \pi_1^{\et}(X_{sm})$
    suppressing this choice.
\end{remark}

\subsection{Covers genuinely ramified in codimension 1}

\ 

Recall that for a Galois cover $Y\to X$ of normal projective varieties the
automorphism group of $Y/X$ corresponds
to the Galois group $G$ of the extension of function fields $\kappa(Y)/\kappa(X)$.
This action is obtained via thinking of $Y$ as the normal closure of $X$ in the function
field of $Y$.
Furthermore, the irreducible components of $Y\times_X Y$
correspond to elements of $G$ via 
$G\times Y\to Y\times_X Y, (\sigma,y)\mapsto (y,\sigma y)$. 

In this subsection we use the symmetries
obtained from the Galois action
on the components of $Y\times_X Y$ to study the connectedness of $Y\times_X Y$
and relate it to the ramification of $Y\to X$.

\begin{definition}[Genuinely ramified in codim at most $i$]
    \label{def: genuinely ramified in codim}
    Let $Y\to X$ be a cover of normal varieties of dimension $d\geq 1$.
    Let $1\leq i \leq \dim(X)$ be an integer.
    
    We say that $Y\to X$ is \emph{genuinely ramified in codimension at most $i$}
    if for every non-trivial
    intermediate cover $Y\to Y'\to X$ of normal varieties
    the cover $Y'\to X$ is ramified at a point of codimension at most $i$.

    In codimension $1$ we abbreviate \emph{genuinely ramified in codimension at most $1$}
    to \emph{genuinely ramified in codimension $1$} as that is the
    smallest codimension the ramification can have.

\end{definition}

\begin{remark}
    A cover $Y\to X$ genuinely ramified in codimension at most $i$ is genuinely
    ramified as the name suggests by definition.
\end{remark}

\begin{lemma}
    Let $Y\to X$ be a Galois cover of normal varieties.
    Fix an integer $1\leq i\leq \dim(X)$.
    Then $Y\to X$ is genuinely ramified in codimension at most $i$
    if and only if for all intermediate Galois covers $X'\to X$
    of $Y\to X$ we have that $X'\to X$ is ramified in codimension at most $i$.
\end{lemma}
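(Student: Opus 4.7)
The forward direction is essentially definitional: an intermediate Galois cover $X' \to X$ of $Y \to X$ is in particular a non-trivial intermediate cover of normal varieties, so if $Y \to X$ is genuinely ramified in codimension at most $i$, then $X' \to X$ must be ramified at some point of codimension at most $i$. So the only content lies in the reverse direction.

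For the reverse direction, I would argue by contrapositive. Suppose $Y \to X$ is not genuinely ramified in codimension at most $i$, so there exists a non-trivial intermediate cover $Y \to Y' \to X$ of normal varieties such that $Y' \to X$ is unramified at every point of codimension at most $i$. Let $G = \gal(Y/X)$. Since $Y'$ is normal and intermediate between $Y$ and $X$, Galois theory for function fields and normalisation gives $Y' = Y/H$ for a proper subgroup $H \subsetneq G$ (using that quotients of normal varieties by finite groups are normal). The goal is then to produce a non-trivial intermediate \emph{Galois} cover $X' \to X$ that is also unramified in codimension at most $i$, contradicting the hypothesis.

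The natural candidate is $X' := Y/H_0$, where $H_0 := \bigcap_{g \in G} g H g^{-1}$ is the core of $H$ in $G$, i.e., the largest normal subgroup of $G$ contained in $H$. Then $H_0 \subseteq H \subsetneq G$, so $X' \to X$ is a non-trivial Galois cover. The key step is to show that $X' \to X$ is unramified in codimension at most $i$. For this I would use the following Galois-theoretic ramification criterion: for a subgroup $K \subseteq G$ and a point $x \in X$, choose a preimage $y_0 \in Y$ of $x$ with inertia group $I_{y_0} \subseteq G$; then the cover $Y/K \to X$ is unramified at $x$ if and only if $g I_{y_0} g^{-1} \subseteq K$ for every $g \in G$, equivalently $I_{y_0} \subseteq \bigcap_{g \in G} g^{-1} K g$. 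Applying this with $K = H$, the hypothesis that $Y/H \to X$ is unramified at every codimension-$\le i$ point $x$ translates to $I_{y_0} \subseteq H_0$ for every such $x$. But then, since $H_0$ is already normal (so equal to its own core), the same criterion applied to $K = H_0$ shows $Y/H_0 \to X$ is unramified at every codimension-$\le i$ point, as desired.

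The main obstacle will be justifying the ramification criterion cleanly in this setting (normal varieties, Galois covers that need not be flat, inertia at generic points of codimension-$\le i$ subvarieties). The cleanest approach is to work locally: reduce to the generic point of each codimension-$\le i$ subvariety, pass to the completed local rings of $Y$, $X$ along the chosen point, and apply the classical Galois-theoretic description of ramification for extensions of discrete or higher-dimensional local rings, using functoriality of inertia groups under the tower $Y \to Y/K \to X$. Once this is in place, the group-theoretic manipulation with the core is immediate.
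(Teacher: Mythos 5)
Your proof is correct and follows essentially the same route as the paper: your $X'=Y/H_0$, with $H_0$ the normal core of $H$ in $G$, is exactly the Galois closure of $Y/H\to X$ inside $Y$, and your inertia-group computation is a proof of the one fact the paper invokes as immediate, namely that an intermediate cover and its Galois closure have the same \'etale locus over $X$. The paper's proof is a one-liner citing that fact, so the only difference is that you supply the (standard, and correct) inertia-theoretic justification rather than taking it as known.
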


\begin{proof}
    This is immediate as the locus where a cover $X'\to X$ is \'etale
    agrees with the locus where its Galois closure is \'etale.
\end{proof}

\begin{definition}[Connected in codimension at most $i$; connecting group]
    \label{definition-connected-in-codim}
    Let $Y\to X$ be a Galois cover of normal varieties
    of dimension at least $1$
    with Galois group $G$.
    We have a surjective morphism 
    \[
        G\times_k Y\to Y\times_X Y, (\sigma,y)\mapsto (y,\sigma y)
    \]
    which induces a one-to-one correspondence of irreducible components.
    We denote the irreducible component corresponding to $\sigma\in G$ by $Y_{\sigma}$.

    For $\sigma,\tau\in G$ we denote by $Y_{\sigma,\tau}$
    the fibre product 
    \[
    \begin{tikzcd}
        Y_{\sigma,\tau}\ar[r] \ar[d]& Y \ar{d}{(\sigma,\tau)}\\
        Y \ar{r}{\diag_{Y/X}} & Y\times_X Y.
    \end{tikzcd}
    \]
    As the diagonal of $Y/X$ is a closed immersion,
    $Y_{\sigma,\tau}$ is a closed subscheme of $Y$ and we consider its codimension in $Y$.
     
    We say that $\sigma$ and $\tau$ (or $Y_{\sigma}$ and $Y_{\tau}$)
    are \emph{connected in codimension at most $i$}
    if there exists a sequence of elements $\sigma=\sigma_1,\sigma_2,\dots,\sigma_n=\tau$
    such that $Y_{\sigma_j,\sigma_{j+1}}$ has codimension $\leq i$ in $Y$ 
    for $j=1,\dots, n-1$.
    
    We say that $Y\times_X Y$ is \emph{connected in codimension at most $i$} 
    if every irreducible component $Y_{\sigma}$, with $\sigma\in G$, 
    is connected to $Y_{e_G}$ in codimension at most $i$.

    We denote the subgroup of $G$ generated by the elements $\sigma\in G$
    which are  connected to $e_G$ in codimension at most $i$ by $G_i$.
    We call $G_i$ the \emph{$i$-th connecting group} of $Y\to X$.
\end{definition}

\begin{remark}
    In the setting of Definition \ref{definition-connected-in-codim}
    we observe for $\sigma,\tau,\rho\in G$:
    \begin{itemize}
        \item Points $y$ in $Y_{\sigma,\tau}$
        correspond to points $y$ in $Y$ such that $\sigma y = \tau y$
        and the induced isomorphism 
        $\sigma^{-1}\tau:\kappa(y)\to \kappa(y)$ is trivial.
        \item $Y_{\sigma\rho,\tau\rho}\cong Y_{\sigma,\tau}$
        \item $Y_{\sigma,\tau}\cong Y_{\rho\sigma,\rho\tau}$.
        \item $Y_{e,\tau}\cong Y_{e,\rho\tau\rho^{-1}}$.
        \item $Y_{\sigma,e}\cong Y_{\rho\sigma\rho^{-1},e}$.
    \end{itemize}
\end{remark}

The connectedness of $Y\times_X Y$ is measured by an ascending sequence
of normal subgroups of the Galois group of the Galois cover $Y\to X$:

\begin{lemma}
\label{lemma-connecting-group}
    Let $Y\to X$ be a Galois morphism of normal varieties of dimension $d\geq 1$
    with Galois group $G$.
    Then the connecting groups form a normal series of normal subgroups of $G$
    \[
         G_1\subseteq \dots \subseteq G_d\subseteq G.
    \]
    Furthermore, the following are equivalent: 
        \begin{enumerate}[(i)]
            \item $Y\to X$ is genuinely ramified in codimension at most $i$,
            \item $Y\times_X Y$ is connected in codimension at most $i$, and
            \item $G_i=G$.
        \end{enumerate}
\end{lemma}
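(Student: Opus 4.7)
The plan is to first establish the group-theoretic facts about the $G_i$ using the symmetries of the $Y_{\sigma,\tau}$, then set up a dictionary between ramification of intermediate Galois covers and the combinatorics of codimension-$\le i$ components, from which both equivalences fall out.

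I would start by showing that the set
\[
S_i:=\{\sigma\in G : \sigma\text{ is connected to } e_G \text{ in codimension at most } i\}
\]
is already a subgroup of $G$, and hence coincides with $G_i$. Closure under multiplication: given paths $e=\sigma_0,\ldots,\sigma_n=\sigma$ and $e=\tau_0,\ldots,\tau_m=\tau$ realizing $\sigma,\tau\in S_i$, concatenate to $e,\sigma_1,\ldots,\sigma,\sigma\tau_1,\ldots,\sigma\tau$; each new consecutive codimension is unchanged by the left-translation isomorphism $Y_{\sigma\tau_j,\sigma\tau_{j+1}}\cong Y_{\tau_j,\tau_{j+1}}$ from the remark preceding the lemma. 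Closure under inversion: reverse the path after left-multiplication by $\sigma^{-1}$ and invoke the swap symmetry $Y_{\alpha,\beta}\cong Y_{\beta,\alpha}$ inherited from the fibre product. Normality of $G_i$ then follows by conjugating a path and using $Y_{\rho\sigma\rho^{-1},\rho\tau\rho^{-1}}\cong Y_{\sigma,\tau}$, the composition of left and right translation. The ascending chain $G_i\subseteq G_{i+1}$ is immediate since any path in codimension $\le i$ is also a path in codimension $\le i+1$.

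The key step is the following dictionary: for every proper normal subgroup $H\triangleleft G$ with corresponding intermediate Galois cover $X':=Y/H\to X$, the cover $X'\to X$ has a ramification point of codimension $\le i$ if and only if there exists $\rho\in G\setminus H$ with $Y_{e,\rho}$ of codimension $\le i$ in $Y$. Assuming ramification at some $x'\in X'$ of codimension $\le i$, lift to a point $y\in Y$ above $x'$; the inertia of $X'/X$ at $x'$ is the image in $G/H$ of the inertia subgroup $I_y\subseteq G$, so its nontriviality yields some $\rho\in I_y\setminus H$. By the remark, $y\in Y_{e,\rho}$, and the codimension of $y$ in $Y$ equals that of $x'$ in $X'$ since $Y\to X'$ is finite. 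The converse is the same calculation in reverse. I expect the main obstacle to lie in setting up this dictionary cleanly: one must identify the inertia of $X'/X$ at $x'$ with the image in $G/H$ of $I_y$, check that $\rho$ indeed acts trivially on $\kappa(y)$, and match codimensions under the finite maps $Y\to X'\to X$.

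With the dictionary in hand, the equivalences follow formally. Statement (ii) literally asserts $S_i=G$, which by the first step equals $G_i=G$, giving (ii)$\Leftrightarrow$(iii). For (i)$\Leftrightarrow$(iii), by the preceding lemma I may restrict to intermediate Galois covers, which correspond bijectively to proper normal subgroups $H\triangleleft G$; the dictionary translates ``$X'\to X$ is unramified in codimension $\le i$'' to ``$S_i\subseteq H$''. Hence (i) fails iff some proper normal $H\triangleleft G$ contains $S_i=G_i$, iff $G_i\subsetneq G$, iff (iii) fails.
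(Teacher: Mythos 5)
Your proof is correct, and while it draws on the same basic ingredients as the paper's --- the translation symmetries of the $Y_{\sigma,\tau}$, the quotients $Y/H$, and the identification of ramification points with points of non-trivial inertia --- it is organized differently in two ways worth noting. First, you observe that the set of elements connected to $e_G$ in codimension at most $i$ is already closed under multiplication and inversion, hence equals $G_i$; the paper only records normality of the generated subgroup, and your observation is what makes (ii)$\Leftrightarrow$(iii) a genuine reformulation rather than an a priori weaker statement about a generating set. Second, you replace the paper's two separate contradiction arguments for (i)$\Leftrightarrow$(iii) by a single biconditional dictionary: for a proper normal subgroup $H\triangleleft G$, the cover $Y/H\to X$ has a ramification point of codimension at most $i$ if and only if some $\rho\in G\setminus H$ satisfies $\mathrm{codim}_Y Y_{e,\rho}\le i$, i.e.\ if and only if the one-step set $T_i:=\{\rho: \mathrm{codim}_Y Y_{e,\rho}\le i\}$ is not contained in $H$; since $\langle T_i\rangle=G_i$ and $G_i$ is itself normal, both implications drop out formally. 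This buys something concrete: the paper's argument for (iii)$\Rightarrow$(i) invokes the assertion that $Y_{\sigma,\tau}$ has codimension at most $i$ for \emph{every} pair $\sigma,\tau\in G_i$, which would require elements of $G_i$ to be connected to $e_G$ in a single step rather than merely by a chain; your dictionary never needs this. The only points to spell out in a full write-up are exactly the ones you flag yourself: the surjection from the inertia group $I_y\subseteq G$ onto the inertia of $Y/H\to X$ at the image of $y$ (the same fact underlying the Stacks reference the paper uses), and the preservation of codimension of points under the finite surjections $Y\to Y/H\to X$, both of which are standard for covers of normal varieties.
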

\begin{proof}
    To show that $G_i$ is a normal subgroup of $G$,
    we show that for $\sigma\in G_i$ and $\tau\in G$ t
    he conjugate $\tau\sigma\tau^{-1}$ also lies in $G_i$.
    It suffices to show that conjugation does not change 
    the codimension of $Y_{\sigma,\nu}$ in $Y$, i.e., 
    \[
        \mathrm{codim}_Y Y_{\sigma,\nu}=\mathrm{codim}_Y Y_{\tau\sigma\tau^{-1},\tau\nu\tau^{-1}},
    \]    
    where $\sigma,\nu,\tau\in G$.
    By considering the morphisms
    \[
        Y\xrightarrow{\tau^{-1}}Y\xrightarrow{(\sigma,\nu)}Y\times_X Y\xrightarrow{(\tau,\tau)}Y\times_X Y
    \]
    we obtain isomorphisms
    \[
    \begin{tikzcd}
        Y_{\sigma,\nu} \ar[d,"\tau","\sim"'] \ar[r,hook]& Y \ar[d,"\tau","\sim"']\\
        Y_{\tau\sigma\tau^{-1},\tau\nu\tau^{-1}} \ar[r,hook] & Y.
    \end{tikzcd}
    \]
    Thus they have the same codimension in $Y$.

    $(ii) \Leftrightarrow (iii):$ The equivalence of $G_i=G$ and $Y\times_X Y$ being connected in codimension $\leq i$ is a reformulation of the definitions.
    
    $(i)\Rightarrow (iii):$ Suppose that the morphism $Y\to X$ is genuinely ramified in codimension $\le i$. 
    Consider the factorization $Y\to Y':=Y/G_i\to X$.  
    Since $G_i$ is normal in $G$, the map $Y'\to X$ is Galois with Galois group $G'=G/G_i$. 
    If $Y'\neq X$, then the map $Y'\to X$ is ramified at a point $y'$ of codimension $\le i$ by assumption. 
    As $X=Y'/G'$, we obtain that there exists an element $\overline{\tau}\in G' \setminus \{e_{G'}\}$
    such that $\overline{\{y'\}}\subseteq Y'_{e_{G'},\overline{\tau}}$
    %by \cite[Theorem p.66]{av}
    by \cite[\href{https://stacks.math.columbia.edu/tag/0BTF}{Tag 0BTF, (9)}]{sp}.
    The quotient morphism $\pi:Y\to Y'$ is surjective and so is the morphism
    \[
        \pi^{-1}(Y'_{e_{G'},\overline{\tau}})=\bigcup_{\sigma\in G_i,\tau\in \overline{\tau}} 
        Y_{\sigma,\tau}\to Y'_{e_{G'},\overline{\tau}},
    \]
    where the union of the $Y_{\sigma,\tau}$'s is taken set-theoretically.
    As $\pi$ is finite,
    we find elements $\tau, \sigma\in G$
    such that $Y_{\sigma,\tau}$ has codimension at most $i$ in $Y$. 
    As $Y_{\sigma}$ is connected to $Y_{e_G}$ in codimension at most $i$,
    the same is true for $Y_{\tau}$.
    Thus, $\tau$ lies in $G_i$, which contradicts $\overline{\tau}\neq e_{G/G_i}$. It follows that $G_i=G$.
    
    $(iii)\Rightarrow (i):$ Conversely, 
    suppose that $G_i=G$, and let $Y\to Y'\to X$ be any intermediate Galois cover. 
    Note that $G_i=G$ implies that $Y\times_X Y$ is connected.
    In particular, $Y'\to X$ is not \'etale.
    
    Assume by way of contradiction that there is a point 
    $y'\in Y'$ of codimension greater $i$ at which $Y'\to X$ is ramified. 
    Then the preimage of $\overline{\{y'\}}$ in $Y$
    also has codimension $>i$ in $Y$ as $Y\to Y'$ is finite. 
    Moreover, it is contained in a union of $Y_{\sigma,\tau}$ 
    for some $\sigma,\tau\in G=G_i$. 
    However, for any pair of elements $\sigma,\tau$ of $G_i$, $Y_{\sigma,\tau}$ 
    has codimension at most $i$ in $Y$, which is a contradiction. 
    We conclude that $Y\to X$ is genuinely ramified in codimension at most $i$.
\end{proof}

\begin{remark}
    Let $Y\to X$ be a Galois morphism of normal projective varieties 
    of dimension $d\geq 1$ with Galois group $G$.
    Then we recover the equivalence of the defining conditions
    of $Y\to X$ being genuinely ramified from Lemma \ref{lemma-connecting-group}.
    The proof given only works for Galois morphisms but has the advantage
    of being more concise than the original one given in \cite{bdp}.
    It also removes the projectivity assumption on $X$.
\end{remark}

\begin{lemma}
    Let $\pi:Y\to X$ be a cover of a normal variety $X$.
    Then $\pi$ is genuinely ramified in codimension $1$ if 
    and only if 
    $\pi_\ast:\pi_1^{\et}(Y_{sm})\to \pi_1^{\et}(X_{sm})$ is surjective.
\end{lemma}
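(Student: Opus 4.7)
The plan is to show that both conditions are equivalent to the following third condition: $\pi$ does not factor through a non-trivial connected quasi-\'etale cover of $X$. The left-hand side is equivalent to this by definition, since a cover of normal varieties fails to be quasi-\'etale precisely when it is ramified at some codimension-$1$ point. It therefore suffices to translate the surjectivity of $\pi_\ast$ into the same condition, using the dictionary from \Cref{lemma:quasi-etale smooth locus} between connected \'etale covers of $X_{sm}$ and connected quasi-\'etale covers of $X$.

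First I would pick a base point $y \in V := Y_{sm} \cap \pi^{-1}(X_{sm})$ with image $x := \pi(y)$. The open $V$ is big in $Y_{sm}$, since its complement sits in $\pi^{-1}(X \setminus X_{sm})$, which has codimension $\geq 2$ in $Y$ as $\pi$ is finite and $X \setminus X_{sm}$ has codimension $\geq 2$. By Zariski--Nagata purity for the regular variety $Y_{sm}$, one has $\pi_1^{\et}(V,y) \cong \pi_1^{\et}(Y_{sm},y)$. The standard Galois-theoretic characterization of surjectivity then reads: $\pi_\ast$ is surjective if and only if $V \to X_{sm}$ does not factor (as a pointed morphism) through any non-trivial connected pointed \'etale cover $(U,u) \to (X_{sm},x)$. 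Combined with \Cref{lemma:quasi-etale smooth locus}, the proof reduces to the geometric claim: $V \to X_{sm}$ factors through $U \to X_{sm}$ if and only if $\pi$ factors through the corresponding quasi-\'etale cover $X' \to X$.

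For the forward direction, I would restrict a factorization $Y \to X' \to X$ to the preimage of $X_{sm}$: the open $U := X' \times_X X_{sm} \subset X'$ is connected, being open in the irreducible $X'$, and is \'etale over the regular $X_{sm}$ by purity of the branch locus, since it is already \'etale away from a codimension-$\geq 2$ locus and is normal. The given factorization restricts to $V \to U \to X_{sm}$. For the reverse direction, a pointed factorization $V \to U \to X_{sm}$ induces an inclusion of function fields $\kappa(X') = \kappa(U) \hookrightarrow \kappa(V) = \kappa(Y)$ over $\kappa(X)$. Since $Y$ is the normalization of $X$ in $\kappa(Y)$, it is also the normalization of $X'$ in $\kappa(Y)$, and this yields the desired morphism $Y \to X'$ over $X$. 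Non-triviality is preserved, as the corresponding covers have the same degree.

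The only genuinely technical point is the appeal to purity of the branch locus, which upgrades the restricted cover $U \to X_{sm}$ from merely quasi-\'etale to \'etale. With this and the normalization argument in hand, the equivalence reduces directly to the standard correspondence between open subgroups of the \'etale fundamental group and pointed connected \'etale covers.
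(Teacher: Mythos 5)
Your proof is correct, and while it shares the paper's overall skeleton -- both arguments reduce surjectivity of $\pi_\ast$ to the non-existence of a factorization of $\pi$ through a non-trivial quasi-\'etale cover, using the dictionary of \Cref{lemma:quasi-etale smooth locus} and the fact that a proper closed subgroup of a profinite group lies in a proper open one -- the mechanics are genuinely different. The paper works with the connectedness criterion for surjectivity (every connected \'etale cover of $X_{sm}$ pulls back to a connected cover of $Y_{sm}$), which forces it to pass to the Galois closure of $X'\to X$ in one direction and to argue with components of normalized fiber products in the other; you instead use the pointed lifting criterion (the image of $\pi_\ast$ lies in the open subgroup $\pi_1^{\et}(U,u)$ if and only if $(V,v)\to(X_{sm},x)$ lifts to $(U,u)$) and convert such lifts into factorizations $Y\to X'\to X$ by normalizing $X$ in function fields. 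This buys a more symmetric argument that avoids the Galois closure entirely and also sidesteps the paper's slightly overstated claim that the normalization of $(X'\times_X Y)_{red}$ is a disjoint union of copies of $Y$ (only one component isomorphic to $Y$ is needed, and only that much is guaranteed when the subgroup $H'$ is not normal). Two points you should make explicit in a write-up: first, that your identification $\pi_1^{\et}(V)\cong\pi_1^{\et}(Y_{sm})$ is compatible with the map $\pi_\ast$ as actually constructed in \Cref{lemma:induced morphism etale fundamental group smooth loci} (it is -- both functors send a quasi-\'etale $X'\to X$ to the normalization of $Y$ in $Y\times_X X'$, restricted over the smooth loci); second, that the lift $V\to U$ is dominant (immediate from finiteness of $U\to X_{sm}$ and irreducibility of $U$), which is what produces the function-field inclusion $\kappa(U)\hookrightarrow\kappa(Y)$ over $\kappa(X)$.
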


\begin{proof}
    If the induced morphism
    $\pi_\ast:\pi_1^{\et}(Y_{sm})\to \pi_1^{\et}(X_{sm})$
    is not surjective, then consider its image $H$.
    As $H$ is a proper closed subgroup of the profinite group $\pi_1^{\et}(X_{sm})$, it is contained in
    a proper open subgroup $H'$ of finite index, e.g., by \cite[Proposition 2.1.4]{RZ}.
    
    The non-trivial quotient $\pi_1^{\et}(X_{sm})/H'$
    corresponds to a non-trivial quasi-\'etale cover $X'\to X$.
    By construction, the normalization of $(X'\times_X Y)_{red}$ 
    is a disjoint union of copies of $Y$.
    In particular, $Y\to X'\times_X Y \to X' \to X$
    is a factorization of $Y\to X$ via the non-trivial
    quasi-\'etale cover $X'\to X$. Thus, $Y\to X$ is not
    genuinely ramified in codimension $1$.

    For the converse,
    recall that $\pi_1^{\et}(Y_{sm})\to \pi_1^{\et}(X_{sm})$
    is surjective if and only if every \'etale cover
    of $X_{sm}$ maps to an \'etale cover of $Y_{sm}$
    by general properties of a morphism of Galois categories,
    \cite[\href{https://stacks.math.columbia.edu/tag/0BN6}{Tag 0BN6}]{sp}
    (recall that "cover" includes irreducible).

    Now, if $Y\to X$ is not genuinely ramified in codimension $1$,
    then there exists a factorization of $Y\to X$
    via a non-trivial quasi-\'etale cover $\varphi:X'\to X$.
    Consider the Galois closure $X''\to X$ of $X'\to X$.
    Then the normalization of $(X''\times_X X')_{red}$ is a disjoint union
    of $\deg(\varphi)$ copies of $X'$ and in particular not connected.
    Then neither is the normalization of 
    $(X''\times_X Y)_{red} = ((X''\times_X X') \times_{X'} Y)_{red}$.
\end{proof}

As for genuinely ramified covers and \'etale covers,
the covers genuinely ramified in codimension $1$ and quasi-\'etale
covers can be seen as the basic building blocks from which we obtain all covers:

\begin{remark}
\label{rmk: factoring quasi-etale and genuinely ramified in codimension 1}
    Recall that a cover $Y\to X$ of a normal variety $X$
    can be factored via $Y\to X'\to X$, where $Y\to X'$ is a genuinely
    ramified cover and $X'\to X$ is an \'etale cover
    as the composition of two \'etale covers is \'etale. 
    This has an analogue for covers genuinely ramified in codimension $1$
    and quasi-\'etale covers as the composition of quasi-\'etale covers is still
    quasi-\'etale.
\end{remark}

We provide an example of a cover genuinely ramified in codimension $1$
by adapting the construction in \cite[Lemma 3.5]{fun}.

\begin{lemma}
\label{example: cyclic cover genuinely ramified in codimension 1}   
    Let $X$ be a normal projective variety of dimension at least $1$
    and $\mathcal{L}$ a reflexive sheaf of rank $1$ on $X$. 
    Fix an integer $d>1, p\nmid d,$ and assume for $1\neq e\mid d$
    that $\mathcal{L}$ does not admit an $e$-th root on every big open
    where $\mathcal{L}$ is a line bundle.
    Then there exists a cyclic cover $Y\to X$ of order $d$ which is genuinely
    ramified in codimension $1$
    and such that $\mathcal{L}_{\mid Y}$ admits a $d$-th root on
    some big open of $Y$ 
\end{lemma}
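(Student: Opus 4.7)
The plan is to adapt the construction of \cite[Lemma 3.5]{fun} by realizing $Y$ as the normalization of $X$ in a Kummer extension $\kappa(X)(\sqrt[d]{f})$ for a carefully chosen rational function $f\in\kappa(X)^{\times}$. First I would fix a Weil divisor $D=\sum_i a_iC_i$ with $\mathcal{O}_X(D)\cong\mathcal{L}$. The hypothesis applied with $e=d$ says that $\mathcal{L}$ is not the reflexive $d$-th power of any reflexive rank-one sheaf, equivalently $[D]$ is not $d$-divisible in $\mathrm{Cl}(X)$; hence the set $J:=\{i\mid d\nmid a_i\}$ is nonempty for any choice of representative~$D$.

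By weak approximation for the pairwise independent discrete valuations $\{\mathrm{ord}_{C_i}\}_{i\in J}$ on $K:=\kappa(X)$, I would choose $f\in K^{\times}$ with $\mathrm{ord}_{C_i}(f)=1$ for every $i\in J$, and set $\pi:Y\to X$ to be the normalization of $X$ in $K(\sqrt[d]{f})$; this is separable since $p\nmid d$. Any factorization $f=g^m$ in $K^{\times}$ would force $m\mid\mathrm{ord}_{C_i}(f)=1$ and so $m=1$, so the class of $f$ in $K^{\times}/(K^{\times})^d$ has order exactly~$d$. By Kummer theory (using that $k$ is algebraically closed with $p\nmid d$, so $K$ contains a primitive $d$-th root of unity), $Y\to X$ is then a cyclic Galois cover of order exactly~$d$.

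For genuine ramification in codimension $1$, any intermediate cover $Y\to Z\to X$ with $Z\to X$ of degree $e\mid d$ is the normalization of $X$ in $K(\sqrt[e]{f})$, and this $Z\to X$ is quasi-\'etale iff $e\mid\mathrm{ord}_C(f)$ for every prime divisor $C\subset X$; since $\mathrm{ord}_{C_i}(f)=1$ for any $i\in J$, this forces $e=1$. For the $d$-th root property, for $i\in J$ we have $\gcd(\mathrm{ord}_{C_i}(f),d)=1$, so $\pi$ is totally ramified of index $d$ over $C_i$ with $\pi^{\ast}C_i=d\widetilde{C}_i$ for a unique prime $\widetilde{C}_i\subset Y$, making $a_i\pi^{\ast}C_i$ divisible by $d$; for $i\notin J$, $d\mid a_i$ so $a_i\pi^{\ast}C_i$ is automatically $d$-divisible. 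Summing, $\pi^{\ast}D=dD'$ for a Weil divisor $D'$ on $Y$, hence $\mathcal{L}_{\mid Y}=\mathcal{O}_Y(\pi^{\ast}D)=\mathcal{O}_Y(D')^{[d]}$, which exhibits $\mathcal{O}_Y(D')$ as a $d$-th root of $\mathcal{L}_{\mid Y}$ on the big open of $Y$ where both sheaves are invertible.

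The main technical input is the weak approximation step producing $f$ with the prescribed orders at the finitely many prime divisors $C_i$, $i\in J$; once $f$ is in hand, all three required properties follow from standard Kummer theory and the ramification calculus for cyclic covers. The place where the hypothesis on $\mathcal{L}$ really enters is in guaranteeing $J\neq\emptyset$, which simultaneously ensures that the cover has order exactly $d$ and that no non-trivial intermediate cover is quasi-\'etale.
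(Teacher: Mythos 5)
Your proof is correct, and it takes a genuinely different route from the paper's. The paper produces the Kummer generator $f$ as a local equation of a \emph{general} member of $|\mathcal{L}(Nd)|$ over the big open where $\mathcal{L}$ is invertible, via Bertini (with the curve case delegated to \cite[Lemma 3.5]{fun}), and it invokes the hypothesis for \emph{all} $1\neq e\mid d$ to rule out $e$-th roots of $f$ and hence pin the degree of the cover at $d$; you instead fix a Weil divisor representative $D$ of $\mathcal{L}$ and use weak approximation to prescribe $\mathrm{ord}_{C_i}(f)=1$ along the components with $d\nmid a_i$. Your version buys several things: it is uniform in all dimensions $\geq 1$ (no Bertini, no separate curve case, no auxiliary twist by $\mathcal{O}_X(Nd)$), it uses only the $e=d$ instance of the hypothesis (non-$d$-divisibility of $[\mathcal{L}]$ in $\mathrm{Cl}(X)$, which is exactly what guarantees $J\neq\emptyset$), and the single condition $\mathrm{ord}_{C_i}(f)=1$ simultaneously forces the degree to be exactly $d$, kills every quasi-\'etale intermediate cover (ramification index $e/\gcd(e,1)=e>1$ over $C_i$), and makes $\pi^{\ast}C_i=d\widetilde{C}_i$ so that $\pi^{\ast}D$ is $d$-divisible. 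Two minor points: your justification that $[f]$ has order exactly $d$ in $K^{\times}/(K^{\times})^d$ is cleaner if argued directly ($f^m=g^d$ gives $d\mid m\cdot\mathrm{ord}_{C_i}(f)=m$) rather than via ``$f$ is not a proper power,'' though both follow from the same valuation computation; and your reading of the hypothesis as non-$d$-divisibility in $\mathrm{Cl}(X)$ is the right one, since $X_{sm}$ is a big open on which $\mathcal{L}$ is invertible and $\mathrm{Pic}(X_{sm})=\mathrm{Cl}(X)$.
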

\begin{proof}
    The case where $X$ is a curve coincides with the setting of \cite[Lemma 3.5]{fun}.
    We assume that $X$ dimension at least $2$ in the following.

    Let $U$ be the big open locus where $\mathcal{L}$ is a line bundle.
    For $N\gg 0$ we have that $\mathcal{L}(Nd)_{\mid U}$ is very ample on $U$
    by \cite[\href{https://stacks.math.columbia.edu/tag/0FVC}{Tag 0FVC}]{sp}.
    Then the general section $s\in H^0(U,\mathcal{L}(Nd))$ gives rise to an effective
    Cartier divisor $D$ on $U$ by Bertini's theorem, 
    \cite[Theoreme 6.10, 3)]{jou}.    
    Say, $D$ is cut out by the dual $s^{\lor}$ which on an affine open neighborhood
    $\Spec(A)\subseteq U$
    intersecting $D$ corresponds to a non-zero divisor $f\in A$.

    Consider the field extension of $L/\kappa(X)=Q(A)$ generated by a $d$-th
    root of $f$. Then $\pi:Y\to X$ is a finite cyclic
    cover of order $d$, where $Y$ is the integral closure of $X$ in $L$,
    as by assumptions there are no roots of $\mathcal{L}$ on every big open where
    it is a line bundle
    (and thus no roots of $f$) and the characteristic does not divide $d$.
    Then by construction $\mathcal{L}(Nd)$ admits a $d$-th root
    on the preimage $\pi^{-1}(U)$ which is a big open in $Y$.

    An intermediate Galois cover $Y\to Y'\to X$ corresponds to the intermediate
    field extension of $L/\kappa(X)$ defined via taking an $e$-th root of $f$.
    As the ramification of $Y'\to X$
    contains the divisor cut out by $f$ on $\Spec(A)$
    we obtain that $Y\to X$ is genuinely ramified in codimension $1$.
\end{proof}

Every genuinely ramified Galois cover with smooth target
is connected in codimension $1$;
this was also observed in \cite[Section 6]{stratified-iso}.
\begin{cor}\label{codim1conn}
    Let $Y\to X$ genuinely ramified Galois cover of normal varieties 
    and let $X$ be smooth.
    Then $Y\times_X Y$ is connected in codimension at most $1$.
\end{cor}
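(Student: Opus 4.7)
The plan is to reduce directly to Theorem \ref{thm:bdp analogue surjective on fundamental group} by showing that, under the smoothness hypothesis on $X$, the notion of genuinely ramified coincides with the notion of genuinely ramified in codimension $1$ introduced in \Cref{def: genuinely ramified in codim}.

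First I would observe that since $X$ is smooth, the smooth locus $X_{sm}$ equals $X$, so by Lemma \ref{lemma:quasi-etale smooth locus} the quasi-\'etale covers of $X$ are in bijection with the \'etale covers of $X_{sm} = X$, i.e.\ with the \'etale covers of $X$ itself. In particular, every quasi-\'etale cover of $X$ is already an \'etale cover (this is the usual purity of the branch locus packaged through the lemma). Consequently, the condition that $Y \to X$ does not factor through a non-trivial quasi-\'etale cover is equivalent to the condition that $Y \to X$ does not factor through a non-trivial \'etale cover. Hence the genuinely ramified Galois cover $Y \to X$ is genuinely ramified in codimension $1$.

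Finally, I would invoke the implication (i) $\Rightarrow$ (ii) of Theorem \ref{thm:bdp analogue surjective on fundamental group} to conclude that $Y \times_X Y$ is connected in codimension at most $1$. The only real content is the collapse of quasi-\'etale to \'etale in the smooth setting; no further obstacle arises since Theorem \ref{thm:bdp analogue surjective on fundamental group} already packages the equivalence we need. (Alternatively, one could argue via fundamental groups: smoothness of $X$ gives $\pi_1^{\et}(X_{sm}) = \pi_1^{\et}(X)$, and the surjection $\pi_1^{\et}(Y_{sm}) \twoheadrightarrow \pi_1^{\et}(Y)$ composes with the surjectivity assumption coming from genuine ramification to yield surjectivity of $\pi_1^{\et}(Y_{sm}) \to \pi_1^{\et}(X_{sm})$, to which one applies (iii) $\Rightarrow$ (ii) of the theorem.)
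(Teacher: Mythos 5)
Your proposal is correct and follows essentially the same route as the paper: smoothness of $X$ plus purity of the branch locus collapses quasi-\'etale to \'etale, so genuinely ramified coincides with genuinely ramified in codimension $1$, and one then invokes the already-established equivalence with connectedness in codimension $1$ (the paper cites Lemma \ref{lemma-connecting-group} directly, which is the content of the implication (i) $\Rightarrow$ (ii) of Theorem \ref{thm:bdp analogue surjective on fundamental group} that you use).
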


\begin{proof}
    If $X$ is smooth, then the purity of branch locus,
    \cite[\href{https://stacks.math.columbia.edu/tag/0BMB}{Tag 0BMB}]{sp}, 
    implies that $Y\to X$ is ramified purely in codimension 1. 
    Thus, $Y\to X$ is genuinely ramified if and only if $G_1=G$. 
    Now apply Lemma \ref{lemma-connecting-group}.
\end{proof}

\subsection{Genuinely ramified in codimension 1 and stability: the Galois case}

From now on we focus on covers genuinely ramified in codimension $1$.
In this subsection we show that reflexive pullback by a Galois cover
genuinely ramified in codimension $1$ preserves stability,
i.e., we prove \Cref{thm:bdp analogue stable} in the Galois case.

We need the following generalization of \cite[Proposition 3.5]{bp}.
In the case where both $Y$ and $X$ are smooth this strengthening was also observed 
in \cite[Lemma 6.4]{stratified-iso}.

\begin{lemma}
\label{lemma-mod-torsion}
    Let $\pi:Y\to X$ be a genuinely ramified Galois cover of degree $n$ of
    normal varieties. Then
    \begin{align*}
        \pi^*((\pi_*\mathcal{O}_Y)/\mathcal{O}_X)/(\mathrm{torsion})\subset\bigoplus_{i=1}^{n-1}{\mathcal{L}_i},
    \end{align*}
    for some proper subsheaves $\mathcal{L}_i\subset\mathcal{O}_Y$.
    
    Moreover, if $Y\to X$ is in addition genuinely ramified in codimension $1$, 
    then $\supp(\mathcal{O}_Y/\mathcal{L}_i)$ has codimension $1$ in $Y$.
\end{lemma}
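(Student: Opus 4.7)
The plan is to work locally, writing $X = \Spec A$ and $Y = \Spec B$ with $B/A$ a Galois extension of group $G$, carry out the analysis there, and then globalize by naturality. The algebraic starting point is the ring homomorphism
\[
\phi \colon B \otimes_A B \to \prod_{\sigma \in G} B, \qquad b \otimes c \mapsto (b\sigma(c))_\sigma,
\]
which is an isomorphism after tensoring with $\kappa(X)$ since $\kappa(Y)/\kappa(X)$ is Galois. Viewing $\prod_\sigma B$ as a free $B$-module via the first factor, $\ker\phi$ is exactly the torsion submodule of $B \otimes_A B$, so $\pi^{\ast}\pi_{\ast}\mathcal{O}_Y/(\mathrm{torsion})$ is identified with $I := \mathrm{im}(\phi)$. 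The canonical inclusion $\mathcal{O}_Y \hookrightarrow \pi^{\ast}\pi_{\ast}\mathcal{O}_Y$ corresponds under $\phi$ to the diagonal $\Delta B \subset I$, giving an identification $\pi^{\ast}((\pi_{\ast}\mathcal{O}_Y)/\mathcal{O}_X)/(\mathrm{torsion}) \cong I/\Delta B$, which is the sheaf to be embedded.

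For each pair $\sigma \neq \tau$ in $G$, the projection $p_{\sigma,\tau}\colon I/\Delta B \to B$, $(a_\nu) \mapsto a_\sigma - a_\tau$, is well-defined. A short calculation on the generators of $I$ (after the substitution $d := \tau(c)$) shows that its image equals the ideal $J_\rho := (\rho(d) - d : d \in B) \subset B$ where $\rho := \sigma\tau^{-1}$. By comparison with the definition of $Y_{e,\rho}$ in \Cref{definition-connected-in-codim}, one recognizes $J_\rho$ as the ideal of the closed subscheme $Y_{e,\rho} \subset Y$; hence $J_\rho \subsetneq \mathcal{O}_Y$ iff $Y_{e,\rho} \neq \emptyset$, and $\supp(\mathcal{O}_Y/J_\rho) = Y_{e,\rho}$. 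The goal is now to assemble $n-1$ of the maps $p_{\sigma_i,\tau_i}$ into an injective morphism $I/\Delta B \hookrightarrow \bigoplus_{i=1}^{n-1}B$ whose image coordinate-wise lies in a proper $J_{\rho_i}$.

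I view the assembly step as a combinatorial problem. The vectors $e_{\sigma_i} - e_{\tau_i}$, $i=1,\dots,n-1$, are linearly independent modulo the diagonal iff the edges $\{\sigma_i,\tau_i\}$ form a spanning tree of the graph $\Gamma$ on vertex set $G$ with an edge $\{\sigma,\tau\}$ whenever $\sigma\tau^{-1} \in R := \{\rho \in G \setminus \{e\} : Y_{e,\rho} \neq \emptyset\}$. The main step, which I expect to be the conceptual heart of the proof, is to show that $\Gamma$ is connected, equivalently that $\langle R \rangle = G$. To see this, one notes that $R$ is closed under $G$-conjugation (a fixed point of $\rho$ at $y$ gives a fixed point of $g\rho g^{-1}$ at $gy$), so $H := \langle R \rangle$ is a normal subgroup of $G$; moreover every stabilizer $G_y$ lies in $R \cup \{e\} \subset H$, so if $H$ were proper in $G$, then $Y/H \to X$ would be a non-trivial finite \'etale cover intermediate to $Y \to X$, contradicting the hypothesis of genuine ramification.

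Once $\Gamma$ is known to be connected, picking a spanning tree and using that $I/\Delta B$ is torsion-free (so generic injectivity upgrades to global injectivity) produces an embedding $I/\Delta B \hookrightarrow \bigoplus_{i=1}^{n-1}\mathcal{L}_i$ with $\mathcal{L}_i := J_{\sigma_i\tau_i^{-1}} \subsetneq \mathcal{O}_Y$, which glues to a global embedding of sheaves on $Y$ by the naturality of the construction (the projections $p_{\sigma,\tau}$ are $G$-equivariantly defined, and the ideal sheaves $\mathcal{I}_{Y_{e,\rho}}$ exist globally). For the moreover statement, I would run the same argument with $R$ replaced by $R_1 := \{\rho \in G \setminus \{e\} : Y_{e,\rho} \text{ has codimension} \leq 1 \text{ in } Y\}$; by \Cref{lemma-connecting-group}, the hypothesis of genuine ramification in codimension $1$ is precisely the statement $G_1 = \langle R_1 \rangle = G$, so the analogous graph $\Gamma_1$ is connected. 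The resulting $\mathcal{L}_i = J_{\rho_i}$ then have $\supp(\mathcal{O}_Y/\mathcal{L}_i) = Y_{e,\rho_i}$ of codimension exactly $1$: nonempty by $\rho_i \in R_1$, and of codimension $\geq 1$ because $\rho_i \neq e$ acts nontrivially on the Galois cover.
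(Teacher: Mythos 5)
Your proof is correct and follows essentially the same route as the paper's: both identify $\pi^*\pi_*\mathcal{O}_Y$ modulo torsion with functions on the components of $Y\times_X Y$, take differences along the edges of a spanning tree of the intersection graph (the paper's breadth-first ordering $\eta$ is exactly such a tree rooted at the diagonal), observe that each difference lands in the ideal of the corresponding pairwise intersection $Y_{e,\rho}$, and get connectivity of the graph from genuine ramification (in codimension $1$) via Lemma~\ref{lemma-connecting-group}. The only cosmetic differences are that you use the scheme-theoretic ideal $J_\rho$ where the paper uses the ideal of the reduced intersection, and that you compute the image of $p_{\sigma,\tau}$ directly on module generators rather than via the Jacobson argument.
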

\begin{proof}
    Consider the cartesian diagram
    \[
    \begin{tikzcd}
        Y\times_X Y \ar[r,"\pr_2"] \ar[d,"\pr_1"] & Y \ar[d,"\pi"]\\
        Y \ar[r,"\pi"] & X.
    \end{tikzcd}
    \]
    We begin by showing
    $\pi^{\ast}\pi_{\ast}\mathcal{O}_Y/T=
    \pr_{1,\ast}\mathcal{O}_{(Y\times_X Y)_{red}}$, 
    where $T$ denotes the torsion submodule of
    $\pi^{\ast}\pi_{\ast}\mathcal{O}_Y$.
    
    Observe that
    $\pi^{\ast}\pi_{\ast}\mathcal{O}_Y=\pr_{1,\ast}\mathcal{O}_{Y\times_X Y}$
    by affine base change. 
    Let $G$ be the Galois group of $Y\to X$ and recall
    that the irreducible components of $Y\times_{X} Y$ are in one-to-one 
    correspondence to elements of $G$ and 
    equipped with the reduced subscheme structure 
    they are all isomorphic to $Y$ via $\pr_1$.
    
    As the intersection of the minimal primes of a ring is the nilpotent radical, we have
    $\mathcal{O}_{(Y\times_X Y)_{red}}\subseteq \bigoplus
    i_{\ast}\mathcal{O}_Y$, where the direct sum is taken over all irreducible
    components $i:Y\to Y\times_X Y$ equipped with the reduced subscheme structure.
    Thus, $\pr_{1,\ast}\mathcal{O}_{(Y\times_X Y)_{red}}$
    is torsion free over $\mathcal{O}_Y$ and we obtain a surjection 
    \[
    \pi^{\ast}\pi_{\ast}\mathcal{O}_Y/T\twoheadrightarrow \pr_{1,\ast}\mathcal{O}_{(Y\times_X Y)_{red}}.
    \]
    Being an isomorphism is a Zariski-local property and
    we can assume that $X$ (resp. $Y$) is the spectrum of a normal domain $A$ (resp. $B$).
    Then all that remains to show is that the nilradical $\eta$ of $B\otimes_A B$
    is torsion considered as a module over $B\to B\otimes_A B, b\mapsto b\otimes 1$.
    We claim that $\eta$ is contained in the kernel $K$ of
    \[
    B\otimes_A B\to Q(B)\otimes_{Q(A)}(Q(A)\otimes_A B)\subseteq Q(B)\otimes_{Q(A)}Q(B).
    \]
    Indeed, $Q(B)\otimes_{Q(A)}Q(B)$ is reduced as $Q(B)/Q(A)$ is Galois.
    Thus, $\eta$ is torsion as $K$ is torsion.

    We now continue with the proof of the lemma in the case where $Y\to X$
    is genuinely ramified in codimension $1$; equivalently 
    $Y\times_X Y$ is connected in codimension $1$, see \Cref{lemma-connecting-group}.
    The irreducible components of $Y\times_X Y$ can be labeled
    by $Y_1,\dots,Y_n$, where $n=\#G$, together 
    with an order preserving map 
    $\eta:\{1,\dots n\} \to \{0,\dots,n-1\}$
    and such that $Y_i\cap Y_{\eta(i)}$ has codimension $1$ as 
    a closed subscheme of $Y_i$ for $1<i\leq n$ as follows:
    Every irreducible component has a shortest path to the diagonal $Y_{e_G}$,
    where we set the distance of irreducible components to $1$ if they are distinct and intersect in codimension $1$.
    Then the labeling and $\eta$ are defined inductively as 
    \begin{itemize}
        \item $Y_1:=Y_{e_G}$ is defined to be the diagonal, $\eta(1):=0$,
        \item for $1<i\leq n$ we define $Y_i$ to be an unlabeled component of 
        the shortest distance among unlabeled components to $Y_1$ 
        and choose $\eta(i)$ such that $Y_{\eta(i)}$ has distance
        $1$ to $Y_i$ and shorter distance to $Y_1$.
    \end{itemize}

    Using this labeling we obtain
    $(\pi^{\ast}\pi_{\ast}\mathcal{O}_Y)/T\subseteq \bigoplus_{i=1}^n \mathcal{O}_{Y_i}\cong \bigoplus_{i=1}^{n}\mathcal{O}_Y.$
    Consider the morphism
    $\varphi:\bigoplus_{i=1}^n \mathcal{O}_Y \to 
    \bigoplus_{i=2}^{n} \mathcal{O}_Y, (s_i)\mapsto (s_i-s_{\eta(i)})$. 
    Then the kernel of $\varphi$ is given by
    \[
        \mathcal{O}_Y\subseteq 
        (\pi^{\ast}\pi_{\ast}\mathcal{O}_Y)/T\cong
        \pr_{1,\ast}\mathcal{O}_{(Y\times_X Y)_{red}}
        \subseteq \bigoplus_{i=1}^{n}\mathcal{O}_{Y},
    \]
    where the first inclusion is via adjunction. Thus
    \[
        (\pi^{\ast}\pi_{\ast}\mathcal{O}_Y)/(T+\mathcal{O}_Y)\subseteq \bigoplus_{i=2}^n\mathcal{O}_Y.
    \]
    
    Let $\mathcal{L}_i$ be the ideal sheaf cutting out $(Y_i\cap Y_{\eta(i)})_{red}$ 
    in $Y_i$. Projecting to the $i$-th component we claim that 
    \[
        (\pi^{\ast}\pi_{\ast}\mathcal{O}_Y)/(T+\mathcal{O}_Y) \to \bigoplus_{i=2}^n \mathcal{O}_Y\to \mathcal{O}_Y \to \mathcal{O}_Y/\mathcal{L}_i
    \]
    is trivial, i.e., 
    \[
    (\pi^{\ast}\pi_{\ast}\mathcal{O}_Y)/(T+\mathcal{O}_Y)\subseteq \bigoplus_{i=2}^n \mathcal{L}_i.  
    \]
    Indeed, a local section $s$ of $\pi^{\ast}\pi_{\ast}\mathcal{O}_Y\cong \pr_{1,\ast}\mathcal{O}_{Y\times_X Y}$
    satisfies $s_i=s_{\eta(i)}$ at $y_i$ for every closed point
    $y_i$ in $(Y_i\cap Y_{\eta(i)})_{red}$.
    This implies the claim as $Y_i\cap Y_{\eta(i)}$ is Jacobson.
    
    Since the $\mathcal{L}_i$ are ideal sheaves cutting out the closed subsets 
    $Y_{i}\cap Y_{\eta(i)}$ as codimension $1$ in $Y_i$ by construction, 
    it follows that the quotients 
    $\mathcal{O}_Y/\mathcal{L}_i$ are supported in codimension $1$.
    The case where $Y\to X$ is only genuinely ramified instead of genuinely
    ramified in codimension $1$ is analogous. The only change required is in the
    ordering of the irreducible components of $Y\times_X Y$: we only require
    non-empty intersection instead of codimension $1$ intersection.
\end{proof}

In the untwisted vector bundle setting and for a genuinely ramified
Galois cover of smooth varieties the following lemma was proven
in \cite[Lemma 6.4]{stratified-iso}.

\begin{lemma}
\label{lemma-homs}
    Let $\pi:Y\to X$ be a Galois cover of normal projective varieties.
    Assume that $\pi$ is genuinely ramified in codimension $1$.
    Let $F$ be a vector bundle on $X$.
    Let $\mathcal{V}_1$ and $\mathcal{V}_2$ 
    be semistable $F$-twisted reflexive sheaves on $X$ of the same slope.
    Then the natural morphism
    \[    
        \Hom_{F-TS}(\mathcal{V}_1,\mathcal{V}_2)\to \Hom_{F_{\mid Y}-TS}({\mathcal{V}_1}_{\mid Y},{\mathcal{V}_2}_{\mid Y})
    \]
    is an isomorphism.

    The same holds if $\mathcal{V}_1,\mathcal{V}_2$ are stable $F$-twisted
    vector bundles of the same slope
    and $Y\to X$ is a genuinely ramified Galois cover.
\end{lemma}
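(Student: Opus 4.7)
Injectivity is immediate: since $\pi$ is dominant and the sheaves are torsion-free in codimension~$1$, an $F$-twisted morphism $\mathcal{V}_1\to\mathcal{V}_2$ is determined by its values at the generic point and cannot be killed by restriction to $Y$.

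For surjectivity, the plan is to use the push--reflexive-pull adjunction. Given $f:\pi^{[*]}\mathcal{V}_1\to\pi^{[*]}\mathcal{V}_2$, let $\tilde f:\mathcal{V}_1\to\pi_*\pi^{[*]}\mathcal{V}_2$ be its adjoint and let $\iota:\mathcal{V}_2\hookrightarrow\pi_*\pi^{[*]}\mathcal{V}_2$ be the unit, which is monic since on a big open it factors as $\id\otimes(\mathcal{O}_X\hookrightarrow\pi_*\mathcal{O}_Y)$ via the projection formula. It suffices to show that $\tilde f$ factors through $\iota$; since both $\mathcal{V}_1$ and $\pi_*\pi^{[*]}\mathcal{V}_2$ are reflexive, this may be checked on any big open $U\subseteq X$ on which $\mathcal{V}_2$ is a vector bundle, i.e., it is enough to show that the composite $\bar{\tilde f}|_U:\mathcal{V}_1|_U\to\mathcal{Q}|_U$ vanishes, where $\mathcal{Q}:=\pi_*\pi^{[*]}\mathcal{V}_2/\mathcal{V}_2$ and $\mathcal{Q}|_U\cong(\mathcal{V}_2\otimes(\pi_*\mathcal{O}_Y/\mathcal{O}_X))|_U$ by projection formula. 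Pulling back to $\pi^{-1}(U)$ and using that $\pi^{[*]}\mathcal{V}_1$ is torsion-free, the pulled-back morphism factors through the torsion-free quotient of $\pi^*\mathcal{Q}|_{\pi^{-1}(U)}$, which by Lemma~\ref{lemma-mod-torsion} embeds into $\bigoplus_{i=1}^{n-1}\pi^{[*]}\mathcal{V}_2\otimes\mathcal{L}_i$ for proper subsheaves $\mathcal{L}_i\subsetneq\mathcal{O}_Y$. The problem thus reduces to showing that each component map $g_i:\pi^{[*]}\mathcal{V}_1\to\pi^{[*]}\mathcal{V}_2\otimes\mathcal{L}_i$ (on $\pi^{-1}(U)$) vanishes.

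In the reflexive case with $\pi$ genuinely ramified in codimension~$1$, Lemma~\ref{lemma-mod-torsion} further gives that $\supp(\mathcal{O}_Y/\mathcal{L}_i)$ has codimension $1$, so $\mu(\mathcal{L}_i)<0$ and hence $\mu(\pi^{[*]}\mathcal{V}_2\otimes\mathcal{L}_i)<\mu(\pi^{[*]}\mathcal{V}_1)$. Since tensoring a semistable sheaf with a rank-$1$ torsion-free sheaf preserves semistability (verified on the big open where $\mathcal{L}_i$ is invertible), $\pi^{[*]}\mathcal{V}_2\otimes\mathcal{L}_i$ is semistable of strictly smaller slope than the semistable $\pi^{[*]}\mathcal{V}_1$ (semistable by Lemma~\ref{lemma-pullback-stable}), forcing each $g_i=0$. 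For the vector bundle case with $\pi$ only genuinely ramified, the $\mathcal{L}_i$ may have cokernels of codimension $\ge 2$ and the slope bound degenerates; here the stability of $\mathcal{V}_1,\mathcal{V}_2$ is essential. By Lemma~\ref{lemma-pullback-stable}(ii) their pullbacks are polystable, so in the semisimple category of polystable $\pi^*F$-twisted sheaves of slope $\mu$ the image $W\subseteq\pi^*\mathcal{V}_2$ of $\iota_i\circ g_i$ (with $\iota_i:\pi^*\mathcal{V}_2\otimes\mathcal{L}_i\hookrightarrow\pi^*\mathcal{V}_2$) is a direct summand. Writing $\pi^*\mathcal{V}_2=W\oplus W^c$ one obtains $\pi^*\mathcal{V}_2\otimes\mathcal{L}_i=(W\otimes\mathcal{L}_i)\oplus(W^c\otimes\mathcal{L}_i)$; since the canonical inclusion $W\hookrightarrow\pi^*\mathcal{V}_2$ projects to $0$ on $W^c$, the lift $W\hookrightarrow\pi^*\mathcal{V}_2\otimes\mathcal{L}_i$ lands entirely in $W\otimes\mathcal{L}_i\subsetneq W$, forcing $W=0$ and thus $g_i=0$.

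The main obstacle is the vector bundle case outside the codimension-$1$ regime, where slopes alone are insufficient and one must exploit the semisimplicity of polystable categories together with the proper inclusion $\mathcal{L}_i\subsetneq\mathcal{O}_Y$ furnished by Lemma~\ref{lemma-mod-torsion}.
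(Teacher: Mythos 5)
Your proposal follows essentially the same route as the paper: reduce via adjunction to the vanishing of $\Hom_{F-TS}(\mathcal{V}_1,\mathcal{V}_2\otimes(\pi_*\mathcal{O}_Y/\mathcal{O}_X))$ on a big open, feed in Lemma~\ref{lemma-mod-torsion}, and kill the resulting maps into $\pi^{[*]}\mathcal{V}_2\otimes\mathcal{L}_i$ by a slope comparison in the codimension-$1$ case and by polystability plus the properness of $\mathcal{L}_i\subsetneq\mathcal{O}_Y$ in the stable-bundle case (your ``direct summand $W$ with $W\subseteq\mathcal{L}_i\cdot W$'' argument is a slightly more explicit packaging of the paper's ``only the zero morphism can factor through $\mathcal{V}_{2\mid Y}\otimes\mathcal{L}_i$'').

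One justification is off, though the step it supports is correct. You claim that because $\pi^{[*]}\mathcal{V}_1$ is torsion-free, the pulled-back morphism ``factors through the torsion-free quotient of $\pi^*\mathcal{Q}$.'' Torsion-freeness of the \emph{source} gives nothing here: a morphism out of a torsion-free sheaf can perfectly well hit the torsion of the target. What you actually need is that vanishing of the composite with the mod-torsion projection forces $\bar{\tilde f}=0$, and this holds because the \emph{target} $\mathcal{Q}|_U\cong\mathcal{V}_2\otimes(\pi_*\mathcal{O}_Y/\mathcal{O}_X)|_U$ is torsion-free on $X$ (so $\bar{\tilde f}$ is detected at the generic point, where the torsion upstairs is invisible). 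This is exactly how the paper arranges things, by pushing the inclusion of Lemma~\ref{lemma-mod-torsion} forward and using that $\pi_*\mathcal{O}_Y/\mathcal{O}_X$ is torsion-free before applying adjunction; with that substitution your argument is complete.
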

    
\begin{proof}
    Let $U$ be a big open in $X$ on which $\mathcal{V}_1$ and $\mathcal{V}_2$
    are vector bundles and $\pi$ is flat.
    Then we have an isomorphism
    \[
        \Hom_{F-TS}(\mathcal{V}_1,\mathcal{V}_2)\xrightarrow{\sim} \Hom_{F_{\mid U}-TS.}(\mathcal{V}_{1\mid U},\mathcal{V}_{2\mid U}),\;\; \varphi\mapsto \varphi_{\mid U}
    \]
    as $\mathcal{V}_1,\mathcal{V}_2$ are reflexive.
    We also have an analogous statement for 
    $V:=\pi^{-1}(U)$ and ${\mathcal{V}_i}_{\mid Y}$, $i\in\{1,2\}$.
    Thus, it suffices to show that the natural morphism
    \[
        \Hom_{F_{\mid U}-TS}(\mathcal{V}_{1\mid U},\mathcal{V}_{2\mid U})\to 
        \Hom_{F_{\mid V}-TS}(\pi_{\mid U}^{\ast}\mathcal{V}_{1\mid U},
        \pi_{\mid U}^{\ast}\mathcal{V}_{2 \mid U})
    \]
    is an isomorphism.
    Using adjunction we obtain
    \begin{align*}
        \Hom_{F_{\mid U}-TS}(\mathcal{V}_{1\mid U},\mathcal{V}_{2\mid U}) \to & \Hom_{F_{\mid V}-TS}(\pi_{\mid U}^{\ast}\mathcal{V}_{1\mid U},\pi_{\mid U}^{\ast}\mathcal{V}_{2\mid U})\\
                     & \cong \Hom_{F_{\mid U}-TS}(\mathcal{V}_{1\mid U},\pi_{\mid U,\ast}\pi_{\mid U}^{\ast}\mathcal{V}_{2\mid U}).
        \end{align*}
    Note that this composition is also induced by the short exact sequence
    \[
    0\to \mathcal{O}_X \to \pi_{\ast}\mathcal{O}_Y \to \pi_{\ast}\mathcal{O}_Y / \mathcal{O}_X \to 0
    \]
    via restricting to $U$, tensoring with $\mathcal{V}_{2\mid U}$, 
    and applying $\Hom_{F_{\mid U}-TS}(\mathcal{V}_{1\mid U},-)$.
    Thus, it suffices to show the vanishing of $\Hom_{F_{\mid U}-TS}(\mathcal{V}_{1\mid U}, \mathcal{V}_{2\mid U}\otimes \pi_{\mid U,\ast}(\mathcal{O}_V/\mathcal{O}_U)).$

    Let $T$ be the torsion submodule of $\pi^{\ast}\pi_{\ast}\mathcal{O}_Y$.
    By Lemma \ref{lemma-mod-torsion} we have 
    \[
        (\pi^{\ast}\pi_{\ast}\mathcal{O}_Y)/(\mathcal{O}_Y+T)\subseteq 
        \bigoplus_{i=1}^{d-1}\mathcal{L}_i
    \]
    for some subsheaves $\mathcal{L}_i\subseteq \mathcal{O}_Y$ such that $\mathcal{O}_Y/\mathcal{L}_i$ has support in codimension $1$, where $d$ 
    denotes the degree of $Y\to X$.
    Restricting to $V$ we obtain
    \[
        \pi_{\mid U}^{\ast}\pi_{\mid U,\ast}\mathcal{O}_V/(\mathcal{O}_V + T_{\mid V})\subseteq \bigoplus_{i=1}^{d-1}\mathcal{L}_{i\mid V}.
    \]
    As $\pi_{\ast}\mathcal{O}_Y/\mathcal{O}_X$ is torsion-free, 
    we find the inclusion 
    \[
        \pi_{\mid U,\ast}\mathcal{O}_V/\mathcal{O}_U\subseteq 
        \pi_{\mid U,\ast}(\pi_{\mid U}^{\ast}\pi_{\mid U,\ast}\mathcal{O}_V/(\mathcal{O}_V + T_{\mid V}))\subseteq 
        \bigoplus_{i=1}^{d-1}\pi_{\mid U,\ast}\mathcal{L}_{i\mid V}.
    \]
    Then we conclude the desired vanishing via computing
    \begin{align*}
        \ & \Hom_{F_{\mid U}-TS}(\mathcal{V}_{1\mid U},
        \mathcal{V}_{2\mid U}\otimes (\pi_{U,\ast}\mathcal{O}_V/\mathcal{O}_U))\\
        \ & \subseteq
        \Hom_{F_{\mid U}-TS}(\mathcal{V}_{1\mid U},
            \mathcal{V}_{2\mid U}\otimes
            \bigoplus_{i=1}^{d-1} \pi_{\mid U,\ast}\mathcal{L}_{i\mid V})\\
        \ &  = \bigoplus_{i=1}^{d-1}
            \Hom_{F_{\mid U}-TS}(\mathcal{V}_{1\mid U},
            \pi_{U,\ast}(\pi_{\mid U}^{\ast}\mathcal{V}_{2\mid U}
            \otimes (\mathcal{L}_i)_{\mid V}))\\ 
        \ & = \bigoplus_{i=1}^{d-1} \Hom_{F_{\mid V}-TS}
            (\pi_{\mid U}^{\ast}(\mathcal{V}_{1\mid U}),
            \pi_{\mid U}^{\ast}(\mathcal{V}_{2\mid U})\otimes (\mathcal{L}_i)_{\mid V})
         = 0,
    \end{align*}
    where in the first equality we use that
    $\mathcal{V}_{2\mid U}$ is a vector bundle.
    In the last equality, we use that $V$ is a big open,
    $\mu({\mathcal{V}_1}_{\mid Y})<\mu({\mathcal{V}_2}_{\mid Y}\otimes \mathcal{L}_i)$
    as $\mathcal{O}_Y/\mathcal{L}_i$ is supported in codimension $1$, 
    and that both ${\mathcal{V}_1}_{\mid Y}$ and ${\mathcal{V}_2}_{\mid Y}$
    are semistable of the same slope.

    The proof of the stable
    vector bundle case is analogous, except for the vanishing
    of $\Hom_{F_{\mid Y}-TS}({\mathcal{V}_1}_{\mid Y},{\mathcal{V}_2}_{\mid Y}\otimes \mathcal{L}_i)$
    as we only know that $\mathcal{L}_i\subsetneq \mathcal{O}_Y$
    are proper subsheaves with no information of the codimension of the support
    of $\mathcal{O}_Y/\mathcal{L}_i$. Indeed, a non-zero morphism of stable
    $F$-twisted vector bundles of the same slope has to be an isomorphism,
    see \Cref{lemma-stable-simple}.
    As ${\mathcal{V}_i}_{\mid Y}$ are both polystable $F_{\mid Y}$-twisted bundles of the same slope for $i\in\{1,2\}$,
    only the $0$-morphism 
    ${\mathcal{V}_1}_{\mid Y}\to{\mathcal{V}_2}_{\mid Y}$
    can factor via ${\mathcal{V}_2}_{\mid Y}\otimes \mathcal{L}_i$.
\end{proof}

\begin{cor}
    \label{cor: bdp analogue stable galois}
    Reflexive pullback by a Galois cover $Y\to X$ of normal projective varieties
    genuinely ramified in codimension $1$
    preserves stability of reflexive $F$-twisted sheaves,
    where $F$ is a vector bundle on $X$.
\end{cor}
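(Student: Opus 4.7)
The plan is to deduce the corollary directly from Lemma \ref{lemma-homs} by taking $\mathcal{V}_1=\mathcal{V}_2=\mathcal{V}$ and comparing endomorphism algebras. The key point is that, once we know $\End_{F-TS}(\mathcal{V})=\End_{F_{\mid Y}-TS}(\mathcal{V}_{\mid Y})=k$, the fact that $\mathcal{V}_{\mid Y}$ is already known to be polystable will force it to be stable.

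Let $\mathcal{V}$ be a stable reflexive $F$-twisted sheaf on $X$. Since a stable sheaf is polystable, \Cref{lemma-pullback-stable}(ii) gives that $\mathcal{V}_{\mid Y}$ is a polystable $F_{\mid Y}$-twisted reflexive sheaf, so we may write its isotypic decomposition
\[
\mathcal{V}_{\mid Y} \;\cong\; \bigoplus_{i=1}^{n} \mathcal{W}_i^{\oplus e_i},
\]
with pairwise non-isomorphic stable $F_{\mid Y}$-twisted reflexive summands $\mathcal{W}_i$ of slope $\mu(\mathcal{V}_{\mid Y})$. By \Cref{lemma-stable-simple}, $\Hom_{F_{\mid Y}-TS}(\mathcal{W}_i,\mathcal{W}_j)=0$ for $i\neq j$ and $\End_{F_{\mid Y}-TS}(\mathcal{W}_i)=k$, so that $\End_{F_{\mid Y}-TS}(\mathcal{V}_{\mid Y})\cong \bigoplus_{i=1}^n M_{e_i}(k)$ as a $k$-algebra.

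Now I apply Lemma \ref{lemma-homs} with $\mathcal{V}_1=\mathcal{V}_2=\mathcal{V}$ (which is legal since $\mathcal{V}$ is in particular semistable): the natural map $\End_{F-TS}(\mathcal{V})\to \End_{F_{\mid Y}-TS}(\mathcal{V}_{\mid Y})$ is an isomorphism. By \Cref{lemma-stable-simple}, the source is $k$. Hence $\bigoplus_{i=1}^n M_{e_i}(k)\cong k$, which forces $n=1$ and $e_1=1$; that is, $\mathcal{V}_{\mid Y}$ is stable.

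Essentially no new work is needed here: all the nontrivial content has been absorbed into Lemma \ref{lemma-homs}, which supplies the hom-comparison under genuine ramification in codimension $1$, and into \Cref{lemma-pullback-stable}(ii), which provides the polystable pullback. The only subtle point one should verify when writing this up is that the isotypic decomposition and the identification of endomorphism algebras of polystable objects go through in the twisted reflexive setting, but both are immediate from \Cref{lemma-stable-simple} and standard Schur-type reasoning.
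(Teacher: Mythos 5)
Your proposal is correct and follows essentially the same route as the paper: polystability of $\mathcal{V}_{\mid Y}$ from \Cref{lemma-pullback-stable}(ii), the hom-comparison of \Cref{lemma-homs} applied with $\mathcal{V}_1=\mathcal{V}_2=\mathcal{V}$, and \Cref{lemma-stable-simple} to identify $\End_{F-TS}(\mathcal{V})=k$, forcing a single stable summand. You merely spell out the isotypic endomorphism-algebra computation that the paper leaves implicit.
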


\begin{proof}
    Recall that the reflexive pullback $\mathcal{V}_{\mid Y}$
    of a stable $F$-twisted sheaf $\mathcal{V}$ 
    is polystable. By \Cref{lemma-homs} we know the endomorphisms of $\mathcal{V}_{\mid Y}$.
    Indeed, we have 
    $\Hom_{F_{\mid Y}-TS}(\mathcal{V}_{\mid Y},\mathcal{V}_{\mid Y})
    =\Hom_{F-TS}(\mathcal{V},\mathcal{V})=k$
    since $\mathcal{V}$ is stable, see \Cref{lemma-stable-simple}.
    Thus, we conclude that the polystable reflexive twisted sheaf
    $\mathcal{V}_{\mid Y}$ has only one stable direct summand,
    i.e., it is stable.
\end{proof}

\subsection{Genuinely ramified in codimension 1 and stability: the general case}
\begin{proof}[Proof of Theorem \ref{thm:bdp analogue stable}]    
    Let $Y\to X$ be a cover of normal projective varieties
    genuinely ramified in codimension $1$.
    Let $F$ be a vector bundle on $X$.
    Consider a stable $F$-twisted sheaf $\mathcal{V}$ on $X$. 
    We wish to show that $\mathcal{V}_{\mid Y}$ is stable as an
    $F_{\mid Y}$-twisted sheaf.
    
    Let $Z\to Y\to X$ be the Galois closure of $Y\to X$,
    i.e., $Z$ is the normal closure of $X$ in the Galois closure of 
    the function fields $\kappa(Y)/\kappa(X)$.
    
    Consider the factorization of $Z\to X$ into 
    a cover $Z\to X'$ genuinely ramified in codimension $1$ 
    and a quasi-\'etale cover $X'\to X$.
    Note that $Z\to X'$ is Galois and so is 
    $X'\to X$ as the maximal quasi-\'etale part of a Galois cover.
    Denote the Galois group of $X'/X$ by $G$.
    
    Note that as $Y\to X$ is genuinely ramified in codimension $1$, 
    the normalization of the reduced fiber product $Y':=(Y\times_X X')_{red}$ 
    is a Galois cover with Galois group $G$,
    see \Cref{thm:bdp analogue surjective on fundamental group}.
    
    There is a morphism $Z\to Y'$ induced by $Z\to Y$ and $Z\to X'$ -
    in a picture
    \[
    \begin{tikzcd}
        Z \ar[ddr, bend right=30] \ar[rrd, bend left=30] \ar[dr, dashed] & \ & \\
        \ & Y' \ar[r] \ar[d] & X' \ar[d]\\
        \ & Y \ar[r] & X.
    \end{tikzcd}
    \]
    Then $\mathcal{V}_{\mid X'}$ is polystable.
    Consider the decomposition 
    $\mathcal{V}_{\mid X'}\cong\bigoplus_{i\in I} \mathcal{V}'_i$
    into stable $F_{\mid X'}$-twisted reflexive sheaves on $X'$.
    By the Galois case, \Cref{cor: bdp analogue stable galois},
    $\mathcal{V}'_{i\mid Z}$ is stable and then so is 
    $\mathcal{W}'_i:=\mathcal{V}'_{i\mid Y'}$.

    Consider a saturated $F$-twisted stable subsheaf 
    $\mathcal{W}\subset \mathcal{V}_{\mid Y}$ of slope 
    $\mu(\mathcal{W})=\mu(\mathcal{V}_{\mid Y})$ on $Y$.
    Then $\mathcal{W}_{\mid Y'}\subset \mathcal{V}_{\mid Y'}$ 
    is saturated and polystable of slope $\mu(\mathcal{V}_{\mid Y'})$.
    Thus, there is some subset $J\subset I$ such that 
    $\mathcal{W}_{\mid Y'}\cong\bigoplus_{j\in J}\mathcal{W}'_j$. 
    Identifying $\mathcal{V}'_i$ with its image in $\mathcal{V}_{\mid X'}$ 
    we find some $\mathcal{W}'_i\subset \mathcal{V}_{\mid Y'}$ 
    contained in $\mathcal{W}_{\mid Y'}$.
    
    Then on the one hand, the pullback of the image $\mathcal{E}$ of 
    \[
        \bigoplus_{\sigma\in G}\sigma^{\ast}\mathcal{V}'_i
        \xrightarrow{\oplus \varphi_{\sigma}^{-1}\sigma^{\ast}\iota} \mathcal{V}_{\mid X'}
    \]
    to $Y'$ is contained in $\mathcal{W}_{\mid Y'}$, 
    where $\iota:\mathcal{V}'_i\to \mathcal{V}_{\mid X'}$ denotes the inclusion
    and $\varphi_{\sigma}$ the $G$-linearization associated to $\mathcal{V}$.
    On the other hand, $\mathcal{E}$ is a $G$-invariant 
    saturated $F_{\mid X'}$-twisted subsheaf of $\mathcal{V}_{\mid X'}$ 
    and descends to a saturated $F$-twisted 
    subsheaf of $\mathcal{V}$ of the same slope, 
    see also the proof of \Cref{lemma-key}.
    Thus, $\mathcal{E}=\mathcal{V}_{\mid X'}$, as $\mathcal{V}$ is stable 
    and therefore $\mathcal{W}_{\mid Y'}=\mathcal{V}_{\mid Y'}$.
    We conclude $\mathcal{W}=\mathcal{V}_{\mid Y}$, i.e., $\mathcal{V}_{\mid Y}$ is stable.
\end{proof}

\section{Functorial notions of stability}

While semistability is preserved under pullback along a cover,
the same is not true for stability. 
In this section we show that the property
of a twisted reflexive sheaf on a smooth projective variety
(or twisted vector bundle on a normal projective variety)
to remain stable on all covers 
of degree prime to the characteristic $p\geq 0$ is an open property.
We do this via constructing an \'etale  prime to $p$ cover 
that checks for this property.
The construction of the cover checking for stability
also has an analogue for stable twisted reflexive sheaves
but "\'etale" has to be replaced by "quasi-\'etale".
Let us make the definition and statement precise:

\begin{definition}
    We call a cover $Y\to X$ of a normal projective variety $X$
    \emph{prime to $p$}, where $p$ is the characteristic of 
    the base field $k$, if the degree of the Galois closure $Y'\to X$ of $Y\to X$
    is not divisible by $p$.
\end{definition}

\begin{definition}
    Let $X$ be a normal projective variety.
    Let $F$ be a vector bundle on $X$.
    We call an $F$-twisted reflexive sheaf $\mathcal{V}$
    \emph{prime to $p$ stable} if for every quasi-\'etale prime to $p$ cover $Y\to X$
    we have that $\mathcal{V}_{\mid Y}$ is a stable $F_{\mid Y}$-twisted reflexive
    sheaf.
\end{definition}

\begin{remark}
    Note that it suffices to check prime to $p$ stability
    of a (twisted) bundle $\mathcal{V}$ on all \'etale prime to $p$ covers
    instead of all quasi-\'etale prime to $p$ covers.
    Indeed, only the maximal \'etale part of 
    a (quasi-\'etale) cover matters for the
    stability of the pullback of $\mathcal{V}$
    by \Cref{thm:bdp analogue stable}.
\end{remark}

\begin{theorem}[Analogue of Theorem 1, \cite{fun}]
    \label{thm:good cover}
    Let $X$ be a normal projective variety. Let $r\geq 2$ and $F$ be a 
    vector bundle on $X$.
    
    Then there exists a prime to $p$ quasi-\'etale Galois cover
    $X_{r,\reflgood}\to X$ such that 
    a stable $F$-twisted 
    reflexive sheaf $\mathcal{V}$ of rank $r$ on $X$
    is prime to $p$ stable if and only if 
    $\mathcal{V}_{\mid X_{r,\reflgood}}$ is stable as an 
    $F_{\mid X_{r,\reflgood}}$-twisted reflexive sheaf.

    Furthermore, there exists a prime to to $p$ \'etale Galois cover
    $X_{r,\good}\to X$ such that a stable $F$-twisted vector bundle
    $\mathcal{V}$ of rank $r$ on $X$
    is prime to $p$ stable if and only if 
    $\mathcal{V}_{\mid X_{r,\good}}$ is stable as
    an $F_{\mid X_{r,\good}}$-twisted bundle.

    If $X$ is in addition smooth, then we can arrange $X_{r,\good}=X_{r,\reflgood}$.
\end{theorem}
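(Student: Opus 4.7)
The plan is to define $X_{r,\reflgood}$ (respectively $X_{r,\good}$) as a quasi-\'etale (respectively \'etale) Galois prime-to-$p$ cover of $X$ dominating every quasi-\'etale (respectively \'etale) prime-to-$p$ cover of $X$ of degree at most a bound $B(r)$ depending only on $r$. By \Cref{cor: quasi-etale of fixed degree} this collection is finite, and the Galois closure of their compositum inherits the quasi-\'etale (or \'etale) and prime-to-$p$ properties, giving the required cover. For smooth $X$, purity of branch locus (\Cref{lemma:quasi-etale smooth locus}) gives quasi-\'etale $=$ \'etale, so the two constructions coincide.

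The core step is to establish the bound $B(r)$: if $\mathcal{V}$ is a stable $F$-twisted reflexive sheaf of rank $r$ on $X$ with $\mathcal{V}_{\mid Y}$ stable for every quasi-\'etale prime-to-$p$ cover $Y \to X$ of degree at most $B(r)$, then $\mathcal{V}$ is prime-to-$p$ stable. Arguing by contradiction, pick a quasi-\'etale prime-to-$p$ cover where stability fails and pass to its Galois closure $Y \to X$ with group $G$. By \Cref{lemma-pullback-stable}(iii) the pullback $\mathcal{V}_{\mid Y}$ is polystable, and \Cref{lemma-key} gives a decomposition
\begin{equation*}
    \mathcal{V}_{\mid Y} \cong \bigoplus_{i=1}^n \mathcal{W}_i^{\oplus e}
\end{equation*}
with pairwise non-isomorphic stable summands on which $G$ acts transitively (on isomorphism classes), and $ne>1$.

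If $n>1$, let $H\le G$ be the stabilizer of $[\mathcal{W}_1]$, of index $n\le r$; the canonical isotypic summand $\mathcal{W}_1^{\oplus e}$ is $H$-invariant and descends by \Cref{lemma-twisted-descend} to a proper saturated $F$-twisted subsheaf of the same slope in $\mathcal{V}_{\mid Y/H}$, so $\mathcal{V}$ is non-stable on the cover $Y/H\to X$ of degree $\le r$. If $n=1$ and $e>1$, then $\End_{F-TS}(\mathcal{W}_1)=k$ by \Cref{lemma-stable-simple}, and the $G$-linearization of $\mathcal{V}_{\mid Y}=\mathcal{W}_1^{\oplus e}$ yields a projective representation $\bar\rho\colon G \to \mathrm{PGL}_e(k)$, which must be projectively irreducible (otherwise an invariant subspace $W\subset k^e$ would give a $G$-invariant subsheaf $W\otimes\mathcal{W}_1$ descending to destabilize $\mathcal{V}$ on $X$). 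Jordan's theorem, applicable since $G$ is prime-to-$p$, supplies an abelian normal subgroup $\bar A \le \bar\rho(G)$ of index bounded by a function $J(r)$; set $H:=\bar\rho^{-1}(\bar A)$, so $[G:H]\le J(r)$. Either $\bar A$ acts non-scalarly on $k^e$ and its isotypic decomposition yields a proper $H$-invariant subsheaf of $\mathcal{V}_{\mid Y}$ of the same slope; or $\bar A$ is trivial in $\mathrm{PGL}_e(k)$, so $|\bar\rho(G)|\le J(r)$ and $H=\ker\bar\rho$ acts by scalars on the fibres, in which case any proper subspace of $k^e$ provides such a subsheaf. In either case, \Cref{lemma-twisted-descend} descends it to destabilize $\mathcal{V}_{\mid Y/H}$ on the cover $Y/H\to X$ of degree $\le J(r)$. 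Setting $B(r):=\max(r,J(r))$ closes the contradiction.

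For the bundle case and $X_{r,\good}$, the first part of \Cref{thm:bdp analogue stable} implies that an $F$-twisted bundle is stable iff its pullback along a genuinely ramified cover is stable, so prime-to-$p$ stability of a bundle is already tested on \'etale prime-to-$p$ covers; the same argument applied to \'etale covers of degree $\le B(r)$ then produces $X_{r,\good}$. The main obstacle is the handling of the case $n=1,\,e>1$, which fundamentally relies on Jordan's theorem together with the descent of twisted subsheaves (\Cref{lemma-twisted-descend}); the latter extension from the vector-bundle setting of \cite[Theorem 3.11]{fun} to reflexive twisted sheaves is what necessitates the careful Clifford-style analysis in each sub-case and the use of descent beyond the flat situation.
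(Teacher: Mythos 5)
Your overall architecture --- dominate all quasi-\'etale prime-to-$p$ covers up to a degree bound, split into the cases $n>1$ and $n=1,\,e>1$ of the decomposition from \Cref{lemma-key}, handle $n>1$ by descending the isotypic summand to an index-$n$ intermediate cover, and attack $e>1$ with Jordan's theorem --- matches the paper's, and your $n>1$ case, the reduction of the bundle statement to \'etale covers via \Cref{thm:bdp analogue stable}, and the smooth case via purity are all fine. The gap is in the crucial case $n=1$, $e>1$. The $G$-linearization of $\mathcal{V}_{\mid Y}\cong\mathcal{W}_1^{\oplus e}$ only gives a \emph{projective} representation $\bar\rho\colon G\to\mathrm{PGL}_e(k)$, and the abelian normal subgroup $\bar A\le\bar\rho(G)$ supplied by Jordan's theorem does not act on $k^e$: only a lift to $\mathrm{GL}_e(k)$ does, and that lift is a central extension of $\bar A$ by scalars which need not be abelian and can act irreducibly (Heisenberg-type groups of central type, e.g.\ the standard projective representation $(\mathbb{Z}/m)^2\to\mathrm{PGL}_m$). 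Hence the branch of your dichotomy asserting that ``$\bar A$ acts non-scalarly, so its isotypic decomposition yields a proper invariant subspace'' is false, and the argument collapses exactly where the real difficulty of the theorem sits.

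This obstruction is precisely what the paper's proof spends most of its length removing before Jordan's theorem is invoked: it first arranges, after a cyclic cover $X'\to X$ of degree dividing $r$ that is genuinely ramified in codimension $1$ (\Cref{example: cyclic cover genuinely ramified in codimension 1}), that $\det(\mathcal{W})$ descends over a big open; it then restricts to a well-chosen complete intersection curve, lifts the descent datum of the determinant to isomorphisms $\psi_\sigma$ whose failure to satisfy the cocycle condition is a $2$-cocycle valued in $\mu_r$, and kills that cocycle on a further central cyclic cover of degree dividing $r$. Only after this linearization does one have an honest representation $\rho\colon\gal(E'/C')\to\mathrm{GL}_e$ to which Jordan (or Brauer--Feit) applies with a genuinely abelian, hence simultaneously triangularizable, subgroup, forcing $e=1$. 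This is also why the paper's cover must dominate quasi-\'etale prime-to-$p$ covers of degree up to $rJ(r)$ rather than your $\max(r,J(r))$. To repair your proof you would need to supply this linearization step (or an argument dealing directly with groups of central type); citing \Cref{lemma-twisted-descend} does not address it.
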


Since stability is an open property and pullback induces
a morphism of the level of stacks we obtain the following:

\begin{cor}\label{stability bundles open}
    Let $F$ be a vector bundle on a normal projective variety $X$.
    Then prime to $p$ stability of $F$-twisted bundles 
    on a normal projective variety is an open property.
\end{cor}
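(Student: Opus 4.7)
The plan is to deduce openness from \Cref{thm:good cover} together with the classical openness of stability in flat families of twisted sheaves. Let $\pi\colon X_{r,\good}\to X$ be the prime to $p$ \'etale Galois cover provided by \Cref{thm:good cover}, where $r$ denotes the rank we are fixing.

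Given a scheme $S$ parametrizing a flat family $(\mathcal{V},\theta)$ of $F$-twisted bundles of rank $r$ on $X$, the pullback $(\pi^{\ast}\mathcal{V},\pi^{\ast}\theta)$ is a flat family of $\pi^{\ast}F$-twisted bundles of rank $r$ on $X_{r,\good}$ parametrized by $S$. Pullback along $\pi$ therefore induces a morphism on the level of families, and equivalently on the level of the associated moduli stacks of $F$-twisted bundles on $X$ and $\pi^{\ast}F$-twisted bundles on $X_{r,\good}$.

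The openness of stability in flat families of $F$-twisted bundles is standard. The argument in \cite[Proposition 2.3.1]{hl} adapts verbatim: the potential destabilizing quotients of bounded rank and slope form a bounded family, parametrized by a Quot scheme proper over $S$, and the $F$-twisted compatibility condition cuts out a closed subscheme inside it. The image of the non-stable locus in $S$ is therefore closed. Consequently, the locus in $S$ over which $(\pi^{\ast}\mathcal{V})_s$ is a stable $\pi^{\ast}F$-twisted bundle is open. By \Cref{thm:good cover}, the fiber $(\mathcal{V}_s,\theta_s)$ is prime to $p$ stable if and only if its pullback along $\pi$ is stable, so the prime to $p$ stable locus in $S$ coincides with the preimage of this open locus and is itself open.

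The main technical point is the openness of stability in families for $F$-twisted bundles; this is not genuinely harder than the untwisted case, since imposing compatibility with $\theta$ on a Quot parameter space is a closed condition and preserves the properness arguments. Once this is in place, the reduction via the good cover is essentially formal: pullback is continuous on families and the good cover converts a condition on infinitely many covers into a condition on a single one, turning the abstractly defined property of prime to $p$ stability into the preimage of a Zariski open locus.
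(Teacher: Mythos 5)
Your argument is correct and is essentially the paper's own: the paper deduces the corollary from \Cref{thm:good cover} by observing that pullback to the good cover induces a morphism on the level of stacks/families and that stability is an open property, which is exactly your reduction. The extra detail you supply (adapting the Quot-scheme argument of \cite[Proposition 2.3.1]{hl} to the twisted setting, and noting that stability of the pullback forces stability on $X$ so the biconditional of \Cref{thm:good cover} applies) only makes explicit what the paper leaves implicit.
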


Similarly, using that for a smooth variety quasi-\'etale
covers coincide with \'etale covers, and thus reflexive pullback
is just pullback, we obtain:

\begin{cor}\label{stability refl open}
    Let $F$ be a vector bundle on a smooth projective variety $X$.
    Then prime to $p$ stability of $F$-twisted reflexive sheaves
    is an open property. 
\end{cor}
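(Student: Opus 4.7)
The plan is to reduce this to Theorem \ref{thm:good cover} in exactly the same way that \Cref{stability bundles open} was obtained, exploiting the additional feature that in the smooth case the good cover $\pi:X_{r,\reflgood}\to X$ is actually \'etale.

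First, I would recall that on a smooth variety every quasi-\'etale cover is \'etale, by the purity of branch locus (\cite[\href{https://stacks.math.columbia.edu/tag/0BMB}{Tag 0BMB}]{sp}). In particular, the quasi-\'etale Galois cover $\pi:X_{r,\reflgood}\to X$ produced by Theorem \ref{thm:good cover} is \'etale, and the theorem tells us that a stable $F$-twisted reflexive sheaf $\mathcal{V}$ of rank $r$ on $X$ is prime to $p$ stable if and only if $\pi^{[*]}\mathcal{V}=\pi^*\mathcal{V}$ is stable as an $F_{\mid X_{r,\reflgood}}$-twisted reflexive sheaf (reflexive pullback and ordinary pullback agree since $\pi$ is flat).

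Next, I would set up the family-theoretic statement: given a flat family $\mathcal{V}$ over $X\times T$ of $F$-twisted reflexive sheaves of rank $r$ with fixed Hilbert polynomial, the locus of $t\in T$ at which $\mathcal{V}_t$ is prime to $p$ stable is to be shown open. Since $\pi$ is \'etale, the morphism $\pi\times\id_T$ is flat, so the pullback $(\pi\times\id_T)^*\mathcal{V}$ is again a flat family of $F_{\mid X_{r,\reflgood}}$-twisted reflexive sheaves on $X_{r,\reflgood}\times T$, and its formation commutes with passing to fibers over $T$.

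Finally, openness of stability in families of reflexive sheaves on the fixed smooth projective variety $X_{r,\reflgood}$ (a standard upper-semicontinuity argument for the slope of the maximal destabilizing subsheaf, compatible with the twisted structure via Definition \ref{fts}) implies that the locus in $T$ where $((\pi\times\id_T)^*\mathcal{V})_t$ is stable is open. By the equivalence recalled above, this open locus coincides with the prime to $p$ stable locus in $T$, which proves the corollary. The only nontrivial input is Theorem \ref{thm:good cover}, and hence there is no real obstacle beyond invoking the standard openness of stability for reflexive sheaves on the smooth target $X_{r,\reflgood}$.
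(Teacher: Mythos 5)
Your proposal is correct and follows essentially the same route as the paper: by purity of branch locus the good cover of Theorem \ref{thm:good cover} is \'etale over the smooth $X$, so reflexive pullback is ordinary (flat) pullback, which is compatible with flat families, and the prime to $p$ stable locus is then the preimage of the (open) stable locus on $X_{r,\reflgood}$. This matches the paper's one-line justification preceding the corollary.
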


We split the construction of $X_{r,\reflgood}\to X$ into two parts.
By the key observation, \Cref{lemma-key}, a stable $F$-twisted reflexive sheaf
$\mathcal{V}$ decomposes on a quasi-\'etale Galois cover $Y\to X$ as
a direct sum of pairwise non-isomorphic
$F_{\mid Y}$-twisted stable reflexive sheaves 
$\mathcal{V}_{\mid Y}\cong \bigoplus_{i=1}^n \mathcal{W}_i^{\oplus e}$
such that the Galois group $\mathrm{Gal}(Y/X)$ acts transitively on the isomorphism
classes of the $\mathcal{W}_i$.
Then $\mathcal{W}_1^{\oplus e}$ descends as a direct summand of twisted reflexive sheaves 
to $Y\to Y/\mathrm{Stab}(\mathcal{W}_1)$ which is a quasi-\'etale cover of
degree $n$ over $X$, where $\mathrm{Stab}(\mathcal{W}_1)\subset \gal(Y/X)$
denotes the stabilizer of $\mathcal{W}_1$.
We obtain the following analogue of \cite[Lemma 3.3]{fun}:

\begin{lemma}
    \label{lemma:n at least 2}
    Let $Y\to X$ be a quasi-\'etale Galois cover of a normal projective variety $X$. 
    Let $F$ be a vector bundle on $X$ and 
    $\mathcal{V}$ be a stable $F$-twisted reflexive sheaf.
    If the decomposition
    $\mathcal{V}_{\mid Y}\cong \bigoplus_{i=1}^n \mathcal{W}_i^{\oplus e}$
    of \Cref{lemma-key} satisfies $n\geq 2$,
    then $\mathcal{V}\cong \pi'_{\ast}\mathcal{W'}$,
    where $Y\to X$ factors via the quasi-\'etale cover $Y\to Y'\xrightarrow{\pi'} X$, 
    $\pi'$ is of degree $n$, and $\mathcal{W'}$ is a stable $F_{\mid Y'}$ twisted sheaf
    on $Y'$.
\end{lemma}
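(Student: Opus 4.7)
The plan is to identify $\mathcal{V}$ with the pushforward along a quasi-\'etale intermediate cover $Y' := Y/H$ of $Y \to X$, where $H$ is the stabilizer of the isomorphism class $[\mathcal{W}_1]$. By \Cref{lemma-key}, the Galois group $G := \gal(Y/X)$ acts transitively on the set of isomorphism classes $\{[\mathcal{W}_1],\dots,[\mathcal{W}_n]\}$, so its stabilizer $H \leq G$ of $[\mathcal{W}_1]$ has index $n$. Setting $Y' := Y/H$ gives a factorization $Y \xrightarrow{q} Y' \xrightarrow{\pi'} X$ where $q$ is Galois with group $H$ and $\pi'$ has degree $n$. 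Since $\pi = \pi' \circ q$ is quasi-\'etale, both $q$ and $\pi'$ are quasi-\'etale as well: on the big open of $X$ over which $\pi$ is \'etale, both factors are \'etale too.

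Next, I descend the isotypic component $\mathcal{W}_1^{\oplus e} \subseteq \mathcal{V}_{\mid Y}$ along $q$. This is a saturated $F_{\mid Y}$-twisted subsheaf, saturation holding because the complement $\bigoplus_{i \geq 2}\mathcal{W}_i^{\oplus e}$ is reflexive and hence torsion-free. It is also $H$-invariant, since $H$ preserves $[\mathcal{W}_1]$ and the isotypic component of this class is canonical. \Cref{lemma-twisted-descend} applied to the Galois cover $q$ then yields a saturated reflexive $F_{\mid Y'}$-twisted subsheaf $\mathcal{W}' \subseteq \mathcal{V}_{\mid Y'}$ with $q^{[\ast]} \mathcal{W}' \cong \mathcal{W}_1^{\oplus e}$.

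To identify $\mathcal{V}$ with $\pi'_\ast \mathcal{W}'$, I work on a big open $U \subseteq X$ over which $\pi$ and $\pi'$ are both \'etale, writing $V := \pi^{-1}(U)$ and $U' := \pi'^{-1}(U)$. The fiber product $V \times_U U'$ is a disjoint union indexed by $G/H$, so flat base change gives a $G$-equivariant isomorphism
\[
   (\pi'_\ast \mathcal{W}')_{\mid V} \;\cong\; \bigoplus_{\sigma H \in G/H} \sigma^\ast \mathcal{W}_1^{\oplus e} \;\cong\; \bigoplus_{i=1}^n \mathcal{W}_i^{\oplus e} \;\cong\; \mathcal{V}_{\mid V}.
\]
Since $V$ is big in $Y$ and both sides are reflexive, this extends uniquely to a $G$-equivariant isomorphism on all of $Y$, and Galois descent along $\pi$ then produces an isomorphism $\pi'_\ast \mathcal{W}' \cong \mathcal{V}$ on $X$. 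Finally, $\mathcal{W}'$ is stable: it is semistable because its pullback $\mathcal{W}_1^{\oplus e}$ is, and any saturated proper $F_{\mid Y'}$-twisted subsheaf of $\mathcal{W}'$ of the same slope would push forward along $\pi'$ to a proper destabilizing subsheaf of $\pi'_\ast \mathcal{W}' = \mathcal{V}$ (with slopes tracked by \Cref{lemma-push-pull-slope} and \Cref{lemma:direct image quasi-etale}), contradicting the stability of $\mathcal{V}$.

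I expect the main technical obstacle to be the identification $\mathcal{V} \cong \pi'_\ast \mathcal{W}'$: one must verify that the base-change isomorphism on the \'etale locus is genuinely $G$-equivariant, so that descent along $\pi$ yields precisely $\pi'_\ast \mathcal{W}'$ on $X$ rather than some twist thereof. Everything else essentially reduces to the Galois descent result \Cref{lemma-twisted-descend} and a routine slope comparison.
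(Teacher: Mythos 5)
Your proposal is correct and follows exactly the route the paper indicates (the paper states this lemma without a written proof, only the preceding remark that the isotypic component $\mathcal{W}_1^{\oplus e}$ descends along $Y\to Y/\mathrm{Stab}(\mathcal{W}_1)$, citing the analogue \cite[Lemma 3.3]{fun}): stabilizer $H$ of $[\mathcal{W}_1]$, quotient $Y'=Y/H$ of degree $n$ by transitivity, descent of the saturated $H$-invariant isotypic component via \Cref{lemma-twisted-descend}, and identification of $\mathcal{V}$ with $\pi'_\ast\mathcal{W}'$. The $G$-equivariance verification you flag can be bypassed entirely: the projection $\mathcal{V}_{\mid Y'}\twoheadrightarrow\mathcal{W}'$ onto the direct summand gives, by adjunction, a nonzero $F$-twisted map $\mathcal{V}\to\pi'_\ast\mathcal{W}'$ between sheaves of equal rank and slope (using \Cref{lemma:direct image quasi-etale}), with $\mathcal{V}$ stable and $\pi'_\ast\mathcal{W}'$ semistable and reflexive, which forces it to be an isomorphism.
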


Then defining $X_{r,\refllarge}$ as 
a quasi-\'etale Galois cover (of degree prime to $p$)
dominating all the finitely many quasi-\'etale (prime to $p$) covers
of $X$ of degree dividing $r$ we obtain the following:

\begin{lemma}
    \label{lemma:big cover}
    Let $X$ be a normal projective variety. Let $r\geq 2$.
    Let $F$ be a vector bundle on $X$.    
    There exist a quasi-\'etale Galois cover $X_{r,\refllarge}\to X$
    and an quasi-\'etale prime-to-$p$ Galois cover $X'_{r,\refllarge}\to X$
    satisfying the following: 
    
    If an $F$-twisted stable reflexive sheaf
    $\mathcal{V}$ of rank $r$ is stable after reflexive pullback to 
    $X_{r,\refllarge}$ (resp. $X'_{r,\refllarge}$),
    then for all Galois covers $Y\to X$ 
    (resp. prime-to-$p$ Galois covers $Y\to X$) 
    we have that $\mathcal{V}_{\mid Y}\cong \mathcal{W}^{\oplus e}$
    for some $e\mid r$ and a stable $F_{\mid Y}$-twisted
    reflexive sheaf $\mathcal{W}$ on $Y$.
\end{lemma}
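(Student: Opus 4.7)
The plan is to prove the contrapositive: construct a Galois cover $X_{r,\refllarge}\to X$ such that, whenever the decomposition $\mathcal{V}_{\mid Y}\cong\bigoplus_{i=1}^n\mathcal{W}_i^{\oplus e}$ from \Cref{lemma-key} has $n\ge2$ for some Galois cover $Y\to X$, already $\mathcal{V}_{\mid X_{r,\refllarge}}$ fails to be stable. The guiding observation is that $n\cdot e=r$, so $n\mid r$ and in particular $n\leq r$; combined with \Cref{lemma:n at least 2}, any nontrivial decomposition forces $\mathcal{V}\cong\pi'_{\ast}\mathcal{W}'$ for some quasi-\'etale cover $\pi':Y'\to X$ of degree $n\le r$.

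Construction of the cover: by \Cref{cor: quasi-etale of fixed degree}, $X$ admits only finitely many quasi-\'etale covers of degree at most $r$. Using that $\pi_1^{\et}(X_{sm})$ is topologically finitely generated, the intersection of the finitely many open normal subgroups corresponding to the Galois closures of these covers is open and normal; the associated quasi-\'etale Galois cover $X_{r,\refllarge}\to X$ then dominates every quasi-\'etale cover of $X$ of degree $\leq r$. For the prime-to-$p$ version $X'_{r,\refllarge}\to X$, one does the same within the finitely many quasi-\'etale prime-to-$p$ covers of degree $\leq r$ and then, if necessary, replaces the result by its maximal prime-to-$p$ quotient to ensure the cover itself is prime to $p$.

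To conclude: suppose $\mathcal{V}$ is stable of rank $r$ and that for some Galois cover $Y\to X$ the decomposition of \Cref{lemma-key} has $n\ge2$. By \Cref{lemma:n at least 2} there is an intermediate quasi-\'etale cover $\pi':Y'\to X$ of degree $n$ with $\mathcal{V}\cong\pi'_{\ast}\mathcal{W}'$ for a stable $F_{\mid Y'}$-twisted reflexive sheaf $\mathcal{W}'$. Since $n\leq r$, our construction gives a factorization $X_{r,\refllarge}\xrightarrow{q}Y'\xrightarrow{\pi'}X$. On a big open $U\subset X$ over which both $\pi'$ and $X_{r,\refllarge}\to X$ are \'etale, the fibre product $X_{r,\refllarge}\times_X Y'$ is a disjoint union of $n$ copies of $X_{r,\refllarge}$, exactly as in the proof of \Cref{lemma:direct image quasi-etale}. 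Flat base change therefore yields an isomorphism of $F$-twisted sheaves on the big open $q^{-1}(\pi'^{-1}(U))\subset X_{r,\refllarge}$,
\[
(\pi'_{\ast}\mathcal{W}')_{\mid X_{r,\refllarge}}\;\cong\;(\mathcal{W}'_{\mid X_{r,\refllarge}})^{\oplus n},
\]
and passing to reflexive hulls extends this to a direct sum decomposition of $\mathcal{V}_{\mid X_{r,\refllarge}}$ into $n\ge2$ non-zero reflexive $F$-twisted summands of the same slope. This contradicts the stability of $\mathcal{V}_{\mid X_{r,\refllarge}}$.

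The prime-to-$p$ case is identical once we verify that the cover $Y'\to X$ produced by \Cref{lemma:n at least 2} is prime to $p$ whenever $Y\to X$ is: $Y'$ is the quotient of $Y$ by the stabilizer of $\mathcal{W}_1$ under $\gal(Y/X)$, hence $\deg(\pi')=n$ divides $\deg(Y/X)$, which is prime to $p$ by assumption. Thus $Y'\to X$ is dominated by $X'_{r,\refllarge}\to X$ and the same argument applies. The main obstacle is essentially bookkeeping: one must ensure the decomposition of $(\pi'_{\ast}\mathcal{W}')_{\mid X_{r,\refllarge}}$ respects the $F$-twisted structure, which is automatic because the decomposition comes from the Galois action on $X_{r,\refllarge}\times_X Y'$ over $U$, which commutes with the pulled-back Higgs field.
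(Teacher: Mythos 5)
Your construction of $X_{r,\refllarge}$ and the mechanics of the contradiction (the cover dominates $Y'$, so $(\pi'_{\ast}\mathcal{W}')_{\mid X_{r,\refllarge}}$ splits over a big open into $n\ge 2$ twisted summands of the same slope and smaller rank, killing stability) are exactly the paper's intended argument, which consists of taking a quasi-\'etale Galois cover dominating the finitely many quasi-\'etale covers of degree dividing $r$ and feeding it into \Cref{lemma:n at least 2}. One cosmetic slip: $n\cdot e=r$ is false in general; the correct relation is $n\cdot e\cdot\rk(\mathcal{W}_i)=r$, which still yields $n\mid r$, so nothing downstream is affected.

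There is, however, one genuine gap. \Cref{lemma:n at least 2} is stated only for \emph{quasi-\'etale} Galois covers $Y\to X$, and its conclusion that $Y'=Y/\mathrm{Stab}(\mathcal{W}_1)\to X$ is quasi-\'etale is precisely what your argument needs, since $X_{r,\refllarge}$ by construction dominates only quasi-\'etale covers of bounded degree. The lemma you are proving quantifies over \emph{all} Galois covers $Y\to X$; for such a $Y$ the intermediate cover $Y'\to X$ may be ramified in codimension $1$, and then nothing forces $X_{r,\refllarge}\to X$ to factor through it, so the sentence ``our construction gives a factorization $X_{r,\refllarge}\to Y'\to X$'' breaks down. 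The repair is short and uses tools already in the paper: factor $Y\to X''\to X$ with $X''\to X$ the maximal quasi-\'etale part (which is Galois, as noted in the proof of \Cref{thm:bdp analogue stable}) and $Y\to X''$ genuinely ramified in codimension $1$, cf. \Cref{rmk: factoring quasi-etale and genuinely ramified in codimension 1}. By \Cref{thm:bdp analogue stable}, reflexive pullback along $Y\to X''$ preserves stability, so if $\mathcal{V}_{\mid X''}\cong\mathcal{W}''^{\oplus e}$ with $\mathcal{W}''$ stable then the same holds on $Y$; contrapositively, $n\ge 2$ on $Y$ forces $n\ge 2$ on $X''$. Now apply \Cref{lemma:n at least 2} to the quasi-\'etale Galois cover $X''\to X$ and run your argument with the resulting $Y'$ intermediate in $X''\to X$. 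This also keeps the prime-to-$p$ bookkeeping intact, since $X''\to X$ and the Galois closure of $Y'\to X$ are intermediate covers of the prime-to-$p$ Galois cover $Y\to X$.
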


Now, in the vector bundles case the proof of \Cref{thm:good cover} 
is analogous to the construction
of the good cover $X_{r,\good}\to X$
given in \cite{fun}.
In particular, the good covers can be chosen to be the same
and depend only on the rank $r$ and the variety $X$. 
To also obtain a version for reflexive sheaves,
we have to make several adjustments but the strategy remains the same.

\begin{definition}[Determinant]
    Let $\mathcal{V}$ be a reflexive sheaf of rank $r\geq 1$ on a normal variety $X$.
    The \emph{determinant} $\det(V)$ is defined as the reflexive hull of
    $\wedge^r \mathcal{V}$.
\end{definition}

\begin{proof}[Proof of \Cref{thm:good cover}]
    We assume that $X$ has dimension at least $1$; 
    the case where $X$ is a point being trivial.
    Assume we have constructed the cover $X_{r,\reflgood}\to X$.
    Then we claim that the maximal \'etale part $X_{r,\good}\to X$
    of $X_{r,\reflgood}\to X$
    is a prime to $p$ \'etale Galois cover and has the desired properties.
    Indeed, the genuinely ramified cover $X_{r,\reflgood}\to X_{r,\good}$ 
    preserves stability of $F$-twisted bundles
    under pullback by \Cref{thm:bdp analogue stable}.
    Furthermore, if $X$ is smooth, then every quasi-\'etale cover of $X$
    is \'etale by purity of branch locus. 
    Thus, we find that we can choose $X_{r,\good}=X_{r,\reflgood}$
    in the smooth case.
    We now continue with the remaining reflexive case and the construction of
    $X_{r,\reflgood}\to X$.
    
    Let $X_{r,\reflgood}\to X$ be a quasi-\'etale prime to $p$ Galois cover 
    dominating all the quasi-\'etale prime to $p$ covers of degree at most $rJ(r)$, 
    where $J(r)$ denotes the constant
    from Jordan's theorem in characteristic $0$ and the constant from
    the positive characteristic analogue due to  Brauer and Feit, 
    see \cite[p. 114]{jor} and \cite{jordanp} respectively.
    There are only finitely many such covers by \Cref{cor: quasi-etale of fixed degree}.
    
    One implication in the statement of the theorem is immediate.
    For the other implication assume that $\mathcal{V}$ is stable
    after pullback to $X_{r,\reflgood}$.
    Let $\pi:Y\to X$ be a quasi-\'etale prime to $p$ Galois cover of $X$
    with Galois group $G$. 
    
    By \Cref{lemma:big cover} we have that 
    $\mathcal{V}_{\mid Y}\cong \mathcal{W}^{\oplus e}$
    for a stable $F_{\mid Y}$-twisted reflexive sheaf $\mathcal{W}$.
    Furthermore, $\mathcal{W}$ is a $G$-invariant reflexive $F_{\mid Y}$-twisted
    sheaf, that is for $\sigma\in G$ there is an isomorphism
    $\mathcal{W}\cong \sigma^{\ast}\mathcal{W}$ of twisted sheaves.
    Note that this need not imply descend of $\mathcal{W}$ to $X$.
    It is however fairly close to descending: up to a finite cyclic prime to
    $p$ cover of $X'\to X$ and $Y'\to Y$ of degree dividing $r$ the determinant $\det(\mathcal{W})$ descends to $X$ over a big open.
    To be precise we claim that there exists a commutative diagram of prime to
    $p$ covers
    \begin{equation}
    \label{diagram:descending det}
    \begin{tikzcd}
        Y' \ar[d]\ar[r] & X' \ar[d]\\
        Y \ar[r] & X
    \end{tikzcd}
    \end{equation}
    such that we have the following:
    \begin{itemize}
        \item $X'\to X$ is a cyclic of degree dividing $r$, 
        \item $X'\to X$ is a cover genuinely ramified in codimension $1$,
        \item $Y'\to X'$ is a  quasi-\'etale Galois cover,
        \item
            $\mathcal{W}_{\mid Y'}\cong \mathcal{W}'^{\oplus e'}$ 
            for a stable $F_{\mid Y'}$-twisted reflexive sheaf $\mathcal{W}'$,
            and 
        \item $\det(\mathcal{W}')$ descends to $X'$ over a big open of $X'$.
    \end{itemize} 
    Note that if $\mathcal{W}_{\mid Y'}$
    is stable as an $F_{\mid Y'}$-twisted sheaf, then so is $\mathcal{W}_{\mid Y}$.
    We postpone the construction of diagram (\ref{diagram:descending det})
    to the end of the proof.
    
    The cover $Y'\to X'$ only has finitely many intermediate normal covers
    as these correspond to intermediate field extensions
    of the finite extension $\kappa(Y')/\kappa(X')$.
    Thus, we claim that there is a complete intersection curve $C'\hookrightarrow X'$
    such that for all intermediate covers $Y'\to Y''\to X'$
    we have 
    \begin{itemize}
        \item $D'':=C'\times_{Y''}X'$ is a smooth projective curve,
        \item $F_{\mid D''}$ and $\mathcal{W}_{\mid D''}$ are 
            vector bundles,
        \item $\mathcal{W}_{\mid Y''}$ is stable as an $F_{\mid Y''}$-twisted sheaf 
            if and only if $\mathcal{W}_{\mid D''}$ is stable as an 
            $F_{\mid D''}$-twisted bundle,
        \item $\mathcal{V}_{\mid Y''}$ is stable as an $F_{\mid Y''}$-twisted sheaf 
            if and only if $\mathcal{V}_{\mid D''}$ is stable as an 
            $F_{\mid D''}$-twisted bundle,
        \item $D''\to C'$ is \'etale, and 
        \item $\det(\mathcal{W}_{\mid D'})$ descends to $C'$, where $D':=Y'\times_{X'}C'$.
    \end{itemize}
    Indeed, this is a combination of the normal Bertini theorem, \cite[Theorem 7]{sei},
    the irreducible Bertini theorem, \cite[Corollaire 6.11 (3)]{jou},
    and the restriction theorem for stable twisted reflexive sheaves,
    \cite[Theorem 3.9]{langer-restriction-theorem} (which also holds for our convention
    of twisted sheaves),
    and the fact that the general hyperplane section of an ample bundle
    intersects a given closed subvariety properly (here: the locus where 
    the quasi-\'etale morphism $Y''\to X'$ is not \'etale, the locus where
    $\mathcal{V}_{\mid X'}$ is not a vector bundle,
    the complement of the locus to which $\det(\mathcal{W})$ descends; 
    all of these loci have codimension at least $2$).
    
    It remains to show that $\mathcal{V}_{\mid D'}$ 
    is a stable $F_{\mid D'}$-twisted bundle,
    where $D'\to C'$ is an \'etale prime to $p$ Galois cover with
    Galois group $\gal(D'/C')=\gal(Y'/X')$. 
    Note that $\mathcal{V}_{\mid C'}$ is stable and 
    $\mathcal{V}_{\mid D'}\cong \mathcal{W}_{\mid D'}^{\oplus e'}$ is the decomposition
    of \Cref{lemma-key} into stable direct summands. 
    Recall that $\mathcal{W}_{\mid D'}$ is simple as a twisted bundle.
    For $\sigma\in \gal(D'/C')$ we have an isomorphism 
    $\mathcal{W}_{\mid D'}\cong \sigma^{\ast}\mathcal{W}_{\mid D'}$ of twisted bundles
    which induces an isomorphism
    $\Hom_{F-TS}(\mathcal{W}_{\mid D'}, \sigma^{\ast}\mathcal{W}_{\mid D'})\cong k^{\ast}$.
    Taking determinants corresponds to taking $r$-th powers
    and we obtain a surjective morphism
    $\Hom_{F-TS}(\mathcal{W}_{\mid D'}, \sigma^{\ast}\mathcal{W}_{\mid D'})
    \twoheadrightarrow
    \Hom(\det(\mathcal{W}_{\mid D'}),\det(\sigma^{\ast}\mathcal{W}_{\mid D'}))$.
    The line bundle $\det(\mathcal{W}_{\mid D'})$
    descends to $C'$ by construction and we can lift the descend datum
    of $\det(\mathcal{W}_{\mid D'})$
    to a system of isomorphisms
    $\psi_{\sigma}:\mathcal{W}_{\mid D'}\xrightarrow{\sim} 
    \sigma^{\ast}\mathcal{W}_{\mid D'}$
    of twisted bundles.

    Consider the obstruction for the system of isomorphisms $\psi_{\sigma}$
    to be a descend datum $\lambda:=(\lambda_{\sigma,\tau})_{\sigma,\tau\in G}$,
    where 
    \[
        \lambda_{\sigma,\tau}:=
        \psi_{\sigma\tau}^{-1}\circ \tau^*(\psi_{\sigma})\circ \psi_{\tau}
        \in \Aut_{F_{\mid D'}-TS}(\mathcal{W}_{\mid D'})\cong k^*.
    \]
    Since $\psi_{\sigma}$ was a lift of a descend datum under taking
    determinants, it follows that $\lambda_{\sigma,\tau}^{r}$ is trivial.
    A computation verifies that 
    $(\lambda_{\sigma,\tau})_{\sigma,\tau\in G}$
    defines an inhomogeneous $2$-cocycle in $H^2(\gal(D'/C'),\mu_{r})$, 
    where $\gal(D'/C')$ acts trivially on $\mu_{r}$,
    see \cite[Proposition 2.8]{dk}.
    By \cite[Lemma 3.5]{fun} there exists a finite cyclic cover
    $\varphi:E'\to D'$ such that 
    \begin{itemize}
        \item $\deg(\varphi)\mid r$,
        \item $E'\to C'$ is Galois and prime to $p$,
        \item $\gal(E'/D')\subset \gal(E'/C')$ is central,
        \item $\varphi^*(\lambda)$ is a boundary of an inhomogenous $1$-cocycle
        $\alpha:\gal(E'/C')\to \mu_r$.
    \end{itemize}
    Thus, $\psi_{\sigma'}:=\varphi^*(\psi_{\sigma})\alpha(\sigma')^{-1}:
    \mathcal{W}_{\mid E'}
    \xrightarrow{\sim} \sigma'^{\ast}\mathcal{W}_{\mid E'}$
    defines a $\gal(E'/C')$-linearization,
    where $\sigma'\in\gal(E'/C')$ maps to $\sigma\in \gal(D'/C')$.

    Fix an isomorphism 
    $\psi:\mathcal{V}_{\mid D'}\xrightarrow{\sim}\mathcal{W}_{\mid D'}^{\oplus e}$
    of $F_{\mid D'}$-twisted bundles on $D'$ and let 
    $\psi^{\mathcal{V}}_{\sigma}:\mathcal{V}_{\mid D'}\xrightarrow{\sim}
    \sigma^{\ast}\mathcal{V}_{\mid D'}, \sigma\in \gal(D'/C')$ 
    be the descend datum associated to $\mathcal{V}$.
    Then the failure of the diagram
    \begin{equation}
        \begin{tikzcd}
        \label{representation diagram}
            \mathcal{V}_{\mid D'} \ar[r,"\psi"] \ar[d,"\psi_{\sigma}^{\mathcal{V}}"] &
            \mathcal{W}_{\mid D'}^{\oplus e} \ar[d,"\psi_{\sigma}^{\oplus e}"]\\
            \sigma^*\mathcal{V}_{\mid D'} \ar[r,"\sigma^*\psi"] &
            \sigma^{\ast}\mathcal{W}_{\mid D'}^{\oplus e}
        \end{tikzcd}
    \end{equation}
    to commute induces a representation 
    \[
        \rho:\gal(E'/C')\to \gl_e, \sigma'\mapsto
        \text{diag}(\alpha(\sigma')) \big{(}\psi \circ (\psi^{\mathcal{V}}_{\sigma})^{-1}
        \circ \sigma^{*}\psi^{-1} \circ \psi_{\sigma}^{\oplus e}\big{)},
    \]
    where $\text{diag}(\alpha(\sigma'))$ denotes the diagonal $e\times e$ matrix
    with $\alpha(\sigma')$ on each diagonal entry and $\sigma'$ maps to 
    $\sigma\in\gal(D'/C')$,
    see the proof of \cite[Lemma 3.7]{fun} for the computation in the untwisted setting
    which carries over verbatim.

    Apply Jordan's theorem (or the positive characteristic analogue of
    Brauer and Feit if $p>0$) 
    to obtain a normal abelian subgroup $N$ of the image
    $\im(\rho)$ of index at most $J(e)$, where $J(e)$ denotes Jordan's constant.
    Then $\rho^{-1}(N)+\gal(E'/D')$ is a normal subgroup of $\gal(E'/C')$
    which furthermore has abelian image under $\rho$
    since $\gal(E'/D')$ is central in $\gal(E'/C')$.
    By construction we have a factorization of Galois covers
    $E'/\ker(\rho)\to D'':=E'/(\rho^{-1}(N)+\gal(E'/D'))\to C'$,
    where the first cover is abelian and the second one is
    of degree at most $J(e)$. Furthermore, 
    $D'\to C'$ factors via $D''$,
    as we modded out by $\gal(E'/D')$.

    Recall that $D'\to C'$ was obtained from restricting the quasi-\'etale
    cover Galois cover $Y'\to X'$ to the large complete intersection curve $C'$
    with the same Galois group $\gal(D'/C')=\gal(Y'/X')$.
    In particular, $D''=Y''\times_{X'}C'$ for a unique intermediate
    quasi-\'etale Galois cover $Y'\to Y''\to X'$, namely the Galois
    closure of $X'$ in the intermediate field extension
    of $\kappa(Y')/\kappa(X')$ corresponding to the normal subgroup
    $\gal(D'/D'')\subset \gal(D'/C')$.
    Then $Y''\to X'\to X$ has degree at most $rJ(e)\leq rJ(r)$.
    Thus, $X_{r,\reflgood}\to X$ dominates 
    the maximal quasi-\'etale part of $Y''\to X$. 
    By assumption $\mathcal{V}_{\mid X_{r,\reflgood}}$ is a stable twisted
    sheaf and then so is $\mathcal{V}_{\mid Y''}$ by \Cref{thm:bdp analogue stable}.
    By construction of $C'$, this also implies that
    $\mathcal{V}_{\mid D''}$ is a stable twisted bundle.

    The pullback of the diagram (\ref{representation diagram}) to $E'$ and modifying
    it by $\text{diag}(\alpha(\sigma'))$
    \[
        \begin{tikzcd}
            \mathcal{V}_{\mid E'} \ar[r,"\psi"] \ar[d,"\psi_{\sigma}^{\mathcal{V}}"] &
            \mathcal{W}_{\mid E'}^{\oplus e} 
            \ar[d,"\text{diag}(\alpha(\sigma'))\varphi^*(\psi_{\sigma}^{\oplus e})"]\\
            \sigma'^*\mathcal{V}_{\mid E'} \ar[r,"\sigma'^*\psi"] &
            \sigma'^{\ast}\mathcal{W}_{\mid E'}^{\oplus e}
        \end{tikzcd}
    \]
    commutes for elements $\sigma'$ in the kernel of $\rho$ mapping
    to $\sigma\in\gal(D'/C')$ by definition of $\rho$.
    Restricting $\rho$ to $\gal(E'/D'')$ yields an irreducible representation
    $\rho':\gal(E'/D'')\to \gl_e$
    since $\mathcal{V}_{\mid D''}$ is stable as a twisted bundle.
    Furthermore, $E'/\ker(\rho)\to D''$ is abelian by construction of $D''$.
    As an abelian subgroup of $\gl_e$ is simultaneously triagonalizable,
    we conclude that $\rho'$ can only be irreducible if $e=1$.
    That is, we conclude that $\mathcal{V}_{\mid Y'}\cong \mathcal{W}$
    is stable.
    
    It remains to construct the commutative diagram \eqref{diagram:descending det}
    with the desired properties.
    Let $r'$ be the prime to $p$ part of $r$.
    By \Cref{example: cyclic cover genuinely ramified in codimension 1}
    applied to taking an $r'$-th root of the reflexive hull of a 
    root of $\det(\mathcal{W})$ defined over the big open where $\det(\mathcal{W})$
    is a line bundle,
    there exists a cyclic prime to $p$
    cover $\varphi:X'\to X$ of degree dividing $r'$ such that
    $\det(\mathcal{W})_{\mid X'}$ has an $r'$-th root over a big open $V'$ of $X'$,
    i.e., there exists a line bundle $\mathcal{L}$ on $V'$
    such that $\mathcal{L}^{\otimes r'}\cong \det(\mathcal{W})_{\mid V'}$.
    Furthermore, $X'\to X$ is genuinely ramified in codimension $1$.
    
    Consider the Galois closure $Y''$ over $X$ of the 
    normalization of an irreducible component of $Y\times_X X'$
    equipped with the reduced subscheme structure surjecting onto $X$.
    Then $Y''\to X$ is a prime to $p$ Galois cover factoring via $X'\to X$
    and $Y\to X$. Furthermore, $Y''\to X'$ is quasi-\'etale since $Y\to X$ is so.
    
    As $\mathcal{V}$ is stable after pullback along $X'_{r,\refllarge}\to X$, 
    we find $\mathcal{V}_{\mid Y''}\cong \mathcal{W}''^{\oplus e''}$
    for a stable reflexive sheaf $\mathcal{W}''$ on $Y''$.
    Note that $\det(\mathcal{V}_{\mid Y''})\cong \det(\mathcal{W}'')^{{\otimes e''}}$.
    The tensor product
    $\bigotimes_{\sigma\in \gal(Y''/X')}\sigma^{\ast}\det(\mathcal{W''})$
    descends to $X'$ over a big open and then so does $\det(\mathcal{W})''^{\otimes d}$,
    where $d:=\mathrm{gcd}(e'',\#\gal(Y''/X'))$.
    Note that $d$ is prime to $p$ since $Y''\to X'$
    is prime to $p$ and $d\mid r'$.
    
    By construction of $X'\to X$ we have an $r'$-th root $\mathcal{L}$
    of $\det(\mathcal{V}_{\mid X'})$
    on a big open $U'$ of $X'$. Let $V'$ be the preimage of $U'$ under $Y''\to X'$
    intersected with the big open locus where $\det(\mathcal{W}'')$ is a line bundle.
    Then $(\mathcal{L}^{\otimes -r'/d}\otimes \det(\mathcal{W}''))_{\mid V'}$
    is a line bundle on $V'$ whose $d$-th tensor power is trivial.
    Thus, there is a quasi-\'etale cyclic cover $Y'\to Y''$ of degree dividing $d$
    such that $(\mathcal{L}^{\otimes r'/d}_{\mid Y'})=\det(\mathcal{W}'')_{\mid Y'}$
    as reflexive rank $1$ sheaves, that is, we found descend
    of $\det(\mathcal{W}'')_{\mid Y'}$ over a big open.

    If $\mathcal{W''}_{\mid Y'}$ remains stable, then we have found the desired diagram.
    If not, then $\mathcal{V}_{\mid Y'}\cong \mathcal{W}'^{\oplus e'}$
    for some $e'>e''$ and we repeat the construction replacing $Y$ by $Y'$.
    Since, the number $e'$ is bounded by $r$ this process terminates.
\end{proof}

\section{Decomposition strata}
In this section, we define a stratification
of the moduli space of stable twisted Higgs bundles
via their behaviour under pullback
and study the image of the Hitchin morphism of the smallest stratum.
As an application, we show that the conjecture of Chen-Ngô 
(\cite[Conjecture 5.2]{chenngo}) holds for abelian and hyperelliptic varieties
(\Cref{hyperelliptic}).

\subsection{Moduli}

Throughout this section $X$ is a normal projective variety over an algebraically
closed field $k$, and $F$ is a vector bundle on $X$. 

We recall the definition of an $F$-twisted Higgs sheaf as well as 
some preliminiaries on moduli of Higgs-sheaves.

\begin{definition}
    An \emph{$F$-twisted Higgs sheaf} on $X$ is a pair $(\mathcal{V},\theta)$, where $\mathcal{V}$
    is a coherent sheaf and $\theta:\mathcal{V}\to\mathcal{V}\otimes F$ is an $\mathcal{O}_X$-linear morphism with the additional requirement that the composition 
    \[
    \theta\wedge\theta: \mathcal{V}\xrightarrow{\theta}\mathcal{V}\otimes F\xrightarrow{\theta\otimes\id}\mathcal{V}\otimes F^{\otimes2}\xrightarrow{\id\otimes\wedge}\mathcal{V}\otimes\wedge^2F
    \]
    vanishes. The last arrow is the identity of $\mathcal{V}$ tensored with the
    natural surjection $F^{\otimes2}\to\wedge^2F$.

    We define $\theta^{i}:\mathcal{V}\to \mathcal{V}\otimes F^{\otimes i}$
    to be the composition
    \[
        \mathcal{V}\xrightarrow{\theta}
        \mathcal{V}\otimes F \xrightarrow{\theta\otimes \mathrm{id}_F} 
        \mathcal{V} \otimes F^{\otimes 2} 
        \xrightarrow{\theta \otimes \mathrm{id}_{F^{\otimes 2}}}\cdots 
        \xrightarrow{\theta\otimes \mathrm{id}_{F^{\otimes {i-1}}}} 
        \mathcal{V}\otimes F^{\otimes i} 
    \]
    for $i\geq 1$.

    We call an $F$-Higgs sheaf \emph{Gieseker-semistable} if
    it is Gieseker-semistable as an $F$-twisted sheaf.
    Similarly, we define Gieseker-stable, slope-stable, and slope-semistable.
    
    When $X$ is smooth, setting $F=\Omega^1_X$ recovers the usual notion of a Higgs sheaf.
    A locally free $F$-twisted Higgs sheaf is called a \emph{$F$-twisted Higgs bundle}.
\end{definition}

For brevity, we often suppress the 'twisted' and just write $F$-Higgs sheaf
or $F$-Higgs bundle. To obtain moduli of $F$-Higgs bundles we recall the notion
of Gieseker semistability.

\begin{definition}[Gieseker stability]
    Let $\mathcal{V}$ be a $F$-twisted sheaf on a normal projective variety $X$. We call $\mathcal{V}$ \emph{Gieseker semistable} if for any $F$-twisted subsheaf $0\neq\mathcal{W}\subset\mathcal{V}$, the inequality of reduced Hilbert polynomials $p_{\mathcal{W}}\le p_{\mathcal{V}}$ holds. 
    
    We call $\mathcal{V}$ \emph{Gieseker stable} if for every proper $F$-twisted subsheaf $0\neq\mathcal{W}\subsetneq \mathcal{V}$ the strict inequality $p_{\mathcal{W}}<p_{\mathcal{V}}$ holds.
\end{definition}

\begin{definition}[JH-filtration]
    Let $\mathcal{V}$ be a Gieseker semistable $F$-twisted sheaf on $X$. 
    Then $\mathcal{V}$ admits a filtration 
    $0=\mathcal{V}_0\subset\mathcal{V}_1\subset\dots\subset
    \mathcal{V}_m=\mathcal{V}$
    by $F$-twisted subsheaves such that the associated graded $\bigoplus_{i=0}^m\mathcal{V}_i/\mathcal{V}_{i-1}$ 
    is a Gieseker polystable $F$-twisted sheaf 
    (i.e. it is a direct sum of $F$-twisted sheaves 
    with the same reduced Hilbert polynomial). 
    This is called a \emph{Jordan-H\"older} (JH for short) filtration of $\mathcal{V}$.
\end{definition}

\begin{definition}[S-equivalence]
    Two $F$-twisted Gieseker semistable 
    sheaves $\mathcal{V}$ and $\mathcal{W}$ on $X$ are called \emph{S-equivalent} 
    if the associated graded objects $\bigoplus_i\mathcal{V}_i/\mathcal{V}_{i-1}$ 
    and $\bigoplus_j\mathcal{W}_j/\mathcal{W}_{j-1}$
    with respect to the JH-filtration are isomorphic as $F$-twisted sheaves.
\end{definition}

We denote by $M_X^{P,\FGss}$ the moduli space of torsion-free,
Gieseker-semistable, $F$-Higgs sheaves of 
Hilbert polynomial $P$ (see \cite{langer-properness} for the construction).
Further, we denote by $M^{P,F-s}_X$ 
the moduli space of slope-stable $F$-Higgs
bundles of Hilbert polynomial $P$ on $X$. 

Given any $F$-Higgs sheaf $(\mathcal{V},\theta)$, we can view $\theta\in
H^0(X,F\otimes\mathrm{End}(\mathcal{V}))$ as an $F$-valued endomorphism of
$\mathcal{V}$ and associate to it its characteristic polynomial. The
characteristic polynomial is a degree $r$ polynomial in a formal variable whose
degree $i$ coefficient is the trace $\tr\theta^{r-i}$ of $\theta^{r-i}$. Here,
$r$ denotes the rank of Higgs sheaves with Hilbert polynomial $P$. Although
$\tr\theta^i\in H^0(X,F^{\otimes i})$ a priori, the Higgs condition
$\theta\wedge\theta=0$ implies that we may view $\tr\theta^i$ as an element of
$H^0(X,\mathrm{Sym}^iF)$ for all $1\le i\le r$.

\begin{definition}
    The \emph{Hitchin morphism} 
    $h^{P,F}:M^{P,\FGss}_X\to\bigoplus_{i=1}^rH^0(X,\mathrm{Sym}^iF)$
    is given by sending $(\mathcal{V},\theta)$ to the coefficients
    $(\tr\theta,\tr\theta^2,\dots,\tr\theta^r)$ of its characteristic polynomial.
    Henceforth, we suppress the Hilbert polynomial $P$ 
    and the bundle $F$ and denote the Hitchin morphism by $h^r$.
    The target of the Hitchin morphism is called the \emph{Hitchin base}
    and we denote it by $A^{r,F}_X$. 
\end{definition}

By \cite[Theorem 3.8]{langer-properness},
we know that the Hitchin morphism is proper in our setting.
As a consequence, we observe in the next lemma that
taking direct sums of Higgs sheaves induces a finite morphism between moduli
spaces. The proof uses a projective completion of the Hitchin base, which is
inspired by the projective compactification of Dolbeault moduli spaces
constructed in \cite{decat}.

\begin{lemma}\label{direct-sum-finite}
    Let $X$ be a normal projective variety and
    $F$ be a vector bundle on $X$.
    Let $P\in \mathbf{Q}[x]$ be a polynomial with rational coefficients.
    Denote by $r$ the rank of torsion-free sheaves with Hilbert polynomial $P$.
    Assume $r\geq 2$ and $r=r_1+r_2$ be a sum of non-negative integers $r_1,r_2$.
    
    Then the direct sum morphism 
    \[
    \oplus: M^{\frac{r_1}{r}P ,\FGss}_X\times M^{\frac{r_2}{r}P,\FGss}_X\to
    M^{P,\FGss}_X,\;\;
    \]
    \[
        ((\mathcal{V}_1,\theta_1), (\mathcal{V}_2,\theta_2))\mapsto
    (\mathcal{V}_1\oplus\mathcal{V}_2,(\theta_1\otimes \id)\oplus(\id \otimes \theta_2))
    \]
    is finite.
\end{lemma}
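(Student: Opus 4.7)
The plan is to show $\oplus$ is finite by proving it is proper with finite fibres, using the commutative square
\[
    \begin{tikzcd}
        M^{\frac{r_1}{r}P,\FGss}_X\times M^{\frac{r_2}{r}P,\FGss}_X \ar[r,"\oplus"] \ar[d,"h^{r_1}\times h^{r_2}"'] & M^{P,\FGss}_X \ar[d,"h^r"]\\
        A^{r_1,F}_X\times A^{r_2,F}_X \ar[r,"m"] & A^{r,F}_X,
    \end{tikzcd}
\]
where $m$ denotes multiplication of characteristic polynomials; the square commutes because the characteristic polynomial of a block-diagonal endomorphism is the product of its block characteristic polynomials. First I observe that $\oplus$ is well-defined and of finite type: the direct sum of two Gieseker-semistable $F$-Higgs sheaves of common reduced Hilbert polynomial $P/r$ is Gieseker-semistable with reduced Hilbert polynomial $P/r$.

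For properness, the key step is to show that $m$ itself is proper; granting this, the composition $h^r\circ\oplus=m\circ(h^{r_1}\times h^{r_2})$ is proper since the Hitchin morphisms are proper by \cite[Theorem 3.8]{langer-properness}, and since $h^r$ is separated, $\oplus$ is proper. Properness of $m$ is not obvious since $m$ is a polynomial map between affine spaces, and this is where the projective completion of the Hitchin base enters. I would introduce a weighted projective completion $\overline{A^{r_i,F}_X}$ reflecting the $\mathbb{G}_m$-action $\theta\mapsto\lambda\theta$ (which gives weight $j$ to the summand $H^0(X,\mathrm{Sym}^jF)$): adjoin a homogenising coordinate $s_i$ of weight $1$ to the weighted coordinate ring of $A^{r_i,F}_X$, so that the open locus $\{s_i\ne 0\}$ recovers $A^{r_i,F}_X$ (the locus of genuinely monic characteristic polynomials). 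Polynomial multiplication is weighted-graded and takes the product of leading coefficients ($s=s_1s_2$), so it extends to a morphism $\overline m\colon\overline{A^{r_1,F}_X}\times\overline{A^{r_2,F}_X}\to\overline{A^{r,F}_X}$ between proper schemes over $k$, which is therefore automatically proper. The crucial observation is $\overline m^{-1}(A^{r,F}_X)=\{s_1s_2\ne 0\}=A^{r_1,F}_X\times A^{r_2,F}_X$, so $m$ is proper as the restriction of a proper morphism to the preimage of an open subset; since $A^{r,F}_X$ is affine, $m$ is in fact finite.

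For finite fibres I use $S$-equivalence. The $S$-class of $(\mathcal{V},\theta)\in M^{P,\FGss}_X$ is uniquely represented by the associated graded $\bigoplus_j(\mathcal{E}_j,\theta_j)^{\oplus n_j}$ of any JH-filtration. Any preimage under $\oplus$ is a pair of polystable $F$-Higgs sheaves of Hilbert polynomials $\frac{r_i}{r}P$ whose direct sum is this polystable representative, and such a splitting is determined by multiplicities $0\le m_j\le n_j$ subject to $\sum_j m_j P_{\mathcal{E}_j}=\frac{r_1}{r}P$; this leaves only finitely many possibilities. Hence $\oplus$ is a proper quasi-finite morphism of finite type, hence finite.

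I expect the main obstacle to be the rigorous setup of the weighted projective compactification together with the verification that $\overline m$ extends to an honest morphism (not merely a rational map) with $\overline m^{-1}(A^{r,F}_X)=A^{r_1,F}_X\times A^{r_2,F}_X$. Conceptually this is the elementary fact that the leading coefficient of a polynomial product is the product of leading coefficients, but some care is needed to handle the mixed weighted $\mathbb{G}_m$-structure, along the lines of the Dolbeault compactification of \cite{decat}.
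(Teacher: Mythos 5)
Your overall strategy is exactly the paper's: quasi-finiteness of $\oplus$ via the uniqueness of the Jordan--H\"older associated graded (only finitely many ways to split a polystable object as a direct sum with prescribed reduced Hilbert polynomial), plus properness deduced from the commutative square with the proper Hitchin morphisms once the multiplication map on Hitchin bases is shown to be proper by extending it to a projective completion. The quasi-finiteness and cancellation ($g\circ f$ proper, $g$ separated $\Rightarrow$ $f$ proper) steps are fine as written.

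The one place you deviate -- the \emph{weighted} completion, with weight $j$ on $H^0(X,\mathrm{Sym}^jF)$ and weight $1$ on the homogenising coordinate -- is precisely where the argument as written breaks, and the obstacle you flag at the end is not merely a matter of care. Dehomogenising via $b_j=a_j/s_0^j$, the homogeneous form of the $i$-th coefficient of the product is $V_i=\sum_{j+k=i}\pm a_ja'_k\,s_0^{k}t_0^{j}$, so every term of every $V_i$ with $i\ge1$ (and also $U_0=s_0t_0$) vanishes on $\{s_0=0\}\cap\{t_0=0\}$; already for $r_1=r_2=1$ one gets $V_2=a_1a_1's_0t_0$ rather than $a_1a_1'$. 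Hence $\overline m$ is only a rational map with nonempty indeterminacy locus, its domain of definition is not proper, and the ``restriction of a proper morphism to the preimage of an open'' argument does not go through; the valuative-criterion extension may land in the base locus. The paper avoids this entirely by taking the \emph{unweighted} scaling action, so that $P^{r,F}_X$ is an honest projective space and each $v_i=\sum_{j+k=i}\pm s_jt_k$ is bilinear of bidegree $(1,1)$; the base locus is then empty because the product of two not-identically-zero polynomials is not identically zero (as $\bigoplus_i H^0(\eta,\mathrm{Sym}^iF_\eta)$ is a domain), and the Cartesian square over the charts $\{s_0\ne0\}$ follows from $v_0=s_0t_0$. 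If you replace your weighted completion by this plain one (or resolve the indeterminacy, which is unnecessary extra work), your proof coincides with the paper's.
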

\begin{proof}
    We first show that $\oplus$ is quasi-finite. As quasi-finiteness
    is an open property, it suffices to show that over every closed point
    the preimage under $\oplus$ is finite.
    
    Note that closed points of $M^{P,\FGss}_X$
    correspond to Gieseker-polystable $F$-twisted torsion-free Higgs sheaves 
    of rank $r$ with reduced Hilbert polynomial $p$,
    where $p$ is the reduced Hilbert polynomial associated to $P$.
    
    Now, given a Gieseker-polystable $F$-twisted torsion-free Higgs sheaf
    $\mathcal{V}$ of rank $r_1+r_2$
    and reduced Hilbert polynomial $p$, there are only finitely many choices
    (up to isomorphism) to write $\mathcal{V}$ as a direct sum 
    $\mathcal{V}_1 \oplus \mathcal{V}_2$
    of Gieseker-polystable $F$-twisted torsion-free Higgs sheaves 
    with reduced Hilbert polynomial $p$. Indeed, each $\mathcal{V}_i$ 
    is a direct sum of the finitely many Gieseker-stable 
    direct summands of $\mathcal{V}$ by 
    the uniqueness of the associated graded of the JH-filtration. We obtain the desired quasi-finiteness
    and it remains to show that the direct sum morphism is proper.

    The characteristic polynomial of 
    $(\mathcal{V}_1\oplus\mathcal{V}_2,\theta_1\oplus\theta_2)$
    is given by the product of the characteristic polynomials of 
    $(\mathcal{V}_1,\theta_1)$ and $(\mathcal{V}_2,\theta_2)$. 
    Thus, $\oplus$ induces a morphism
    $\oplus:A^{r_1,F}_X\times A^{r_2,F}_X\to A^{r_1+r_2,F}_X$, 
    $((s_1,\dots,s_{r_1}),(t_1,\dots,t_{r_2}))\mapsto(v_1,\dots,v_{r_1+r_2})$, 
    where $v_i$ is given by $(-1)^i$ times the degree $r_1+r_2-i$
    coefficient of the polynomial
    $(y^{r_1}-s_1y^{r_1-1}+\dots+(-1)^{r_1}s_{r_1})
    \cdot(y^{r_2}-t_1y^{r_2-1}+\dots+(-1)^{r_2}t_{r_2})$, 
    for $1\le i\le r_1+r_2$. 
    For example, 
    $v_1=s_1+t_1$, $v_2=s_2+s_1\cdot t_1+t_2$,...,$v_{r_1+r_2}=
    s_{r_1}\cdot t_{r_2}$.
    
    We have the following commutative diagram

\begin{equation}\label{csum}
\begin{tikzcd}
       M^{\frac{r_1}{r}P,\FGss}_X\times M^{\frac{r_2}{r}P,\FGss}_X 
       \ar{r}{\oplus} \ar{d}{h^{r_1}\times h^{r_2}} & M_X^{P,\FGss} 
       \ar{d}{h^{r}}\\
        A^{r_1,F}_X\times A^{r_2,F}_X \ar{r}{\oplus} & A^{r,F}_X.
\end{tikzcd}
\end{equation}

Note that $h^{r_1}\times h^{r_2}$ is proper because both $h^{r_1}$
and $h^{r_2}$ are proper.
We claim that $\oplus:A^{r_1,F}_X\times A^{r_2,F}_X\to A^{r,F}_X$
is proper as well.

Let $P^{r,F}_X$ denote the projective completion of $A^{F,r}_X$ formed 
by adding an extra variable in degree zero.
More precisely,
$P^{r,F}_X\cong(\bigoplus_{i=0}^rH^0(X,\mathrm{Sym}^iF)
\setminus\{\mathbf{0}\})\sslash\mathbb{G}_m$,
where $H^0(X,\mathrm{Sym}^0F)=H^0(X,\mathcal{O}_X)$, $\mathbf{0}
:=(0,\dots,0)\in\bigoplus_{i=0}^rH^0(X,\mathrm{Sym}^iF)$
is the "origin", and the $\mathbb{G}_m$-action is given by scaling,
i.e., $\lambda\cdot(s_0,\dots,s_n)=(\lambda s_0,\dots,\lambda s_n)$.
It follows from the construction that $P^{r,F}_X$
is a projective space over $k$.

Thus, a point in $P_X^{r,F}$ can be represented by 
$[s_0:\dots:s_n]$ in homogeneous coordinates
and $A_X^{r,F}$ can be identified with the chart $\{s_0\neq0\}$. 

We define a map $\overline{\oplus}:P^{r_1,F}_X\times P^{r_2,F}_X\to P^{r,F}_X$ 
extending $\oplus$, given by
\[
    ([s_0:s_1:\dots:s_{r_1}],[t_0:t_1:\dots:t_{r_2}])\mapsto[v_0:v_1:\dots:v_{r}],
\]
where each $v_i$ is $(-1)^i$ times the degree $r-i$ coefficient of the polynomial 
\[
    (s_0y^{r_1}-s_1y^{r_1-1}+\dots+(-1)^{r_1}s_{r_1})\cdot
    (t_0y^{r_2}-t_1y^{r_2-1}+\dots+(-1)^{r_2}t_{r_2}),
\]
for $0\le i\le r_1+r_2=r$. 

Note that $\overline{\oplus}$ is indeed well defined, because the product of two
such polynomials is identically zero if and only if one of them is identically zero.
This follows from the fact that the ring 
$\bigoplus_{i=1}^\infty H^0(\eta,\Sym^i F_{\eta})$ 
is a polynomial ring
over the generic point $\eta=\Spec(\kappa(X))$.
We recover $\oplus$ by restricting $\overline{\oplus}$ to the chart
$\{s_0\neq0\}\times\{t_0\neq0\}$.
We obtain the Cartesian diagram

\begin{center}
\begin{tikzcd}
       A^{r_1,F}_X\times A^{r_2,F}_X \ar[hookrightarrow]{r} \ar{d}{\oplus} & P^{r_1,F}_X\times P^{r_2,F}_X \ar{d}{\overline{\oplus}}\\
        A^{r,F}_X \ar[hookrightarrow]{r} & P^{r,F}_X
\end{tikzcd}
\end{center}
from which it follows that $\oplus:A^{r_1,F}_X\times A^{r_2,F}_X\to A^{r,F}_X$ is proper.
By the commutativity of \ref{csum}, we obtain the properness of 
\[
    \oplus: M^{\frac{r_1}{r}P,\FGss}_X\times M^{\frac{r_2}{r}P,\FGss}_X\to
    M^{P,\FGss}_X.
\]
From the properness together with the quasi-finiteness, we conclude that $\oplus$ is finite. 
\end{proof}

\subsection{The split cover and strata}
The splitting of a stable twisted Higgs bundle after pullback along 
an \'etale Galois cover enables us to construct a stratification on moduli spaces 
of stable Higgs bundles with respect to this cover.
Iterating the good cover constructed in \Cref{thm:good cover}
we obtain a stratification on these moduli spaces that is independent of the
cover.

\begin{definition}[Decomposition type with respect to a cover]\label{dec type}
    Let $\pi:Y\to X$ be an \'etale Galois cover with Galois group $G$
    of a normal projective variety $X$.
    Let $F$ be a vector bundle on $X$ and
    let $\mathcal{V}$ be a stable $F$-twisted bundle of rank $r$ on $X$.
    Then by \Cref{lemma-key} we have 
    $\mathcal{V}_{\mid Y}\cong \bigoplus_{i=1}^n \mathcal{W}_i^{\oplus e}$
    for $n,e\geq 1$ and stable $F_{\mid Y}$-twisted bundles $\mathcal{W}_i$
    such that $G$ acts transitively on the isomorphism classes of the $\mathcal{W}_i$.
    Thus by the transitivity, $m:=\rk(\mathcal{W}_i)$ is independent of $i$ and the
    same holds for $P':=P(\mathcal{W}_i)$ as the Hilbert polynomial is computed with
    respect to $\pi^{\ast}\mathcal{O}_X(1)$ which is invariant under $G$.

    We call the natural number $m$ the \emph{decomposition type} 
    of $\mathcal{V}$ with respect to $\pi$.
\end{definition}

As in \cite{strata}, the decomposition type 
allows us to define a stratification 
of the moduli space of $F$-twisted stable Higgs
bundles with respect to an \'etale Galois cover.

\begin{definition}[Decomposition stratification with respect to a cover]
For an \'etale Galois cover $\pi:Y\to X$ of a normal projective variety $X$
we define decomposition strata using Lemma \ref{lemma-key}. Let $P\in\mathbb{Q}[x]$ be a polynomial
and assume that the moduli space $M^{P,F-s}_X$ 
of $F$-twisted
stable Higgs bundles with Hilbert polynomial $P$ is non-empty. 
Denote the rank of bundles with Hilbert polynomial $P$ by $r$.
We define for integers $m\geq 1$ such that $m\mid r$ the following stratum

\[
    Z^{F-s,P}(m,\pi):=
    \left\{(V,\varphi)\in M^{P,F-s}_X 
    \middle \vert
    \begin{array}{c}
    (\mathcal{V},\varphi) \text{ has decomposition}\\
    \text{type }m \text{ with respect to }\pi
    \end{array}
    \right\}.
\]
\end{definition}

\begin{lemma}
    Let $\pi:Y\to X$ be an \'etale Galois cover of a normal projective variety $X$.
    Let $P$ be a polynomial with rational coefficients and $F$ be a vector bundle on $X$. 
    Let $r$ be the rank of torsion-free sheaves on $X$ with Hilbert polynomial $P$.
    
    Then the decomposition stratification with respect to $\pi$ 
    defines a stratification with ordering given by 
    \[
        Z^{F-s,P}(m',\pi)\leq Z^{F-s,P}(m,\pi):\Leftrightarrow m'\mid m
    \]
    in the sense that the strata are disjoint and
    $\bigcup_{m'\mid m}Z^{F-s,P}(m',\pi)$ is closed
    in $M^{P,F-s}_X$ for all $m\mid r$.
\end{lemma}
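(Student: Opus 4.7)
The plan is as follows. The first assertion, that the strata are pairwise disjoint, is immediate from \Cref{lemma-key} together with the Krull--Schmidt-type uniqueness of the decomposition of a polystable $F_{\mid Y}$-twisted Higgs bundle into stable summands: for any $(\mathcal{V},\varphi)\in M^{P,F-s}_X$, the common rank $m=\rk(\mathcal{W}_i)$ of the stable summands of $\mathcal{V}_{\mid Y}$ is an isomorphism invariant of $(\mathcal{V},\varphi)$, so $Z^{F-s,P}(m,\pi)$ and $Z^{F-s,P}(m',\pi)$ are disjoint whenever $m\neq m'$.

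For the closedness of $Z_{\leq m}:=\bigcup_{m'\mid m}Z^{F-s,P}(m',\pi)$, my strategy is to realize each stratum $Z^{F-s,P}(m',\pi)$ as the intersection of $M^{P,F-s}_X$ with the image of a proper pushforward morphism from a moduli space on an intermediate cover. Concretely, for each subgroup $H\leq G$ with $n:=[G:H]$ and each integer $e\geq 1$ with $ne\mid r$, I would set $Z_H:=Y/H$, $m':=r/(ne)$, and consider the moduli space $M^{P_H,F_{Z_H}-s}_{Z_H}$ of slope-stable $F_{Z_H}$-twisted Higgs sheaves on $Z_H$ of rank $m'e$ and suitable Hilbert polynomial $P_H$ determined by $P$. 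Any stable $(\mathcal{V},\varphi)$ of decomposition type $m'$ arises, by \Cref{lemma-key} and the argument of \Cref{lemma:n at least 2} (descending $\mathcal{W}_1^{\oplus e}$ along the Galois cover $Y\to Z_H$ using its $H$-linearization coming from the stabilizer of $[\mathcal{W}_1]$), as the pushforward $(\pi_{Z_H})_\ast\mathcal{W}'$ of such a $\mathcal{W}'$; conversely, whenever a pushforward $(\pi_{Z_H})_\ast\mathcal{W}'$ happens to be stable on $X$, a direct computation of $\mathcal{V}_{\mid Y}$ via base change shows that it has decomposition type exactly $r/(ne)=m'$.

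This produces pushforward morphisms $\Phi_{H,e}\colon M^{P_H,F_{Z_H}-s}_{Z_H}\to M^{P,F-ss}_X$. The core technical step is to verify that each $\Phi_{H,e}$ is proper and therefore has closed image, which I plan to do via a Hitchin-base compactification argument analogous to the proof of \Cref{direct-sum-finite}: pushforward of twisted Higgs sheaves is compatible with taking characteristic polynomials in a way that extends to a projective completion of the Hitchin base, and the Hitchin morphism on $M^{P_H,F_{Z_H}-s}_{Z_H}$ is proper by \cite[Theorem 3.8]{langer-properness}. Since $G$ has only finitely many subgroups and the multiplicity $e$ is bounded by $r$, the equality
\[
    Z_{\leq m}\;=\;\bigcup_{\substack{(H,e):\,r/(ne)\mid m}}\Phi_{H,e}\bigl(M^{P_H,F_{Z_H}-s}_{Z_H}\bigr)\;\cap\; M^{P,F-s}_X
\]
expresses $Z_{\leq m}$ as a finite union of closed subsets of the open locus $M^{P,F-s}_X\subset M^{P,F-ss}_X$, hence as a closed subset of $M^{P,F-s}_X$. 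The principal obstacle I anticipate is the proper extension of $\Phi_{H,e}$ via the Hitchin base, together with the careful bookkeeping of Hilbert polynomials and multiplicities under the pushforward $(\pi_{Z_H})_\ast$.
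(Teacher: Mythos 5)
Your disjointness argument is fine, but the closedness argument has a genuine gap: the set-theoretic identity expressing $Z_{\leq m}$ as a union of images of pushforward morphisms $\Phi_{H,e}$ is false. The forward inclusion (every $\mathcal{V}$ of decomposition type $m'$ is a pushforward from $Y/\mathrm{Stab}([\mathcal{W}_1])$) is correct via \Cref{lemma:n at least 2}, but the claimed converse --- that stability of $(\pi_{Z_H})_\ast\mathcal{W}'$ on $X$ forces decomposition type exactly $r/(ne)$ --- does not hold. Being a pushforward from $Z_H=Y/H$ only records the number $n$ of distinct isomorphism classes of stable summands on $Y$; it does not control the rank of those summands, which is governed by how $\mathcal{W}'_{\mid Y}$ itself decomposes. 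Concretely, take $H=G$ and $e=r/m$ with $m<r$: then $Z_H=X$, $\Phi_{G,e}$ is the identity on $M^{P,F-s}_X$, and $r/(ne)=m\mid m$, so your union contains all of $M^{P,F-s}_X$ --- including any stable Higgs bundle that remains stable on $Y$ and hence has decomposition type $r\nmid m$. Restricting the source of $\Phi_{H,e}$ to those $\mathcal{W}'$ whose pullback to $Y$ decomposes into rank-$m'$ pieces would repair the identity, but showing that locus is closed is exactly the statement you are trying to prove (and is literally circular for $H=G$), so the pushforward strategy cannot be completed as stated.

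The paper avoids this by going in the opposite direction: it uses the pullback morphism $\pi^\ast:M^{P,F-s}_X\to\bigsqcup_i M^{P_i',F_{\mid Y}\mathrm{-G-ss}}_Y$ (well defined since pullback preserves semistability) and observes that, because all stable summands of $\mathcal{V}_{\mid Y}$ have the same rank by \Cref{lemma-key}, the condition ``decomposition type divides $m$'' is equivalent to $\mathcal{V}_{\mid Y}$ lying in the image of the $(r/m)$-fold direct sum map $\prod_{j=1}^{r/m}M^{\frac{m}{r}P_i'}_Y\to M^{P_i'}_Y$. That image is closed because the direct sum morphism is finite (\Cref{direct-sum-finite}), and $Z_{\leq m}$ is its preimage under $\pi^\ast$, hence closed. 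This uses only the already-established finiteness result and sidesteps both the properness of a pushforward morphism and the bookkeeping over subgroups $H$ and multiplicities $e$. If you want to salvage your route, you would at minimum need to replace the images of the $\Phi_{H,e}$ by loci cut out on $Y$ (or on $Z_H$) by a decomposition condition and prove their closedness independently --- at which point you have essentially reconstructed the paper's argument.
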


\begin{proof}
    It is clear that the strata are disjoint by definition.
    The interesting claim is that the union of all smaller strata is closed.
    Note that the moduli space of stable Higgs bundles
    with Hilbert polynomial $P$ is of finite type.
    In particular, there are only finitely many Hilbert polynomials
    $P'_1,\dots,P'_n$ that can appear as
    the Hilbert polynomial of a pullback $\mathcal{V}_{\mid Y}$
    of a stable Higgs bundle with Hilbert polynomial $P$.
    Observe that all the stable direct summands of $\mathcal{V}_{\mid Y}$
    have the same Hilbert polynomial since the Galois group of $Y\to X$
    acts transitively on them by the key lemma, \Cref{lemma-key},
    and the Hilbert polynomial is computed
    with respect to the pullback of the polarization of $X$ to $Y$
    which is invariant under this action as well.
    
    Then $\bigcup_{m'\mid m}Z^{F-s,P}(m',\pi)$
    is the union of the preimages of the image of the direct sum map
    \[
       \prod_{j=1}^{r/m} M^{\frac{m}{r}P'_i,F_{\mid Y}\mathrm{-G-ss}}_Y\to M^{P'_i,F_{\mid Y}\mathrm{-G-ss}}_X, 
    (\mathcal{W}_1,\dots,\mathcal{W}_{r/m})\mapsto \mathcal{W}_1\oplus \dots
    \oplus \mathcal{W}_{r/m}
    \]
    for $i=1,\dots,n$ under
    $\pi^{\ast}:M^{P,F-s}_X\to \bigsqcup_{i=1}^n M^{P'_i,F_{\mid Y}\mathrm{-G-ss}}_X$.
    Since direct sum induces a finite morphism by \Cref{direct-sum-finite},
    we obtain the desired closedness.
\end{proof}

\begin{lemma}[Split cover]\label{splitcover}
    Let $X$ be a normal projective variety. Let $r\geq 2$ be an integer
    and $F$ be a vector bundle on $X$.
    Then there exists an \'etale prime to $p$ Galois cover
    $X_{r,\split}\to X$ such that for every \'etale prime to $p$
    Galois cover $Y\to X$ dominating $X_{r,\split}\to X$
    any stable $F$-twisted vector bundle of rank $\leq r$
    on $X$ has the same decomposition
    type with respect to $Y$ as it does with respect to $X_{r,\split}$.
\end{lemma}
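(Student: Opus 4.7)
The plan is to proceed by strong induction on $r\ge 1$, phrased uniformly over all pairs consisting of a normal projective variety and a vector bundle on it. The base case $r=1$ is immediate: every stable rank-one $F$-twisted bundle has decomposition type $1$ on every cover, so one can take $X_{1,\split}:=X$. The uniformity in the base variety is what allows the inductive hypothesis at level $r-1$ to be applied not only to $(X,F)$ but also to $(X_{r,\good},F_{\mid X_{r,\good}})$, which will be essential.

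For the inductive step I would first invoke \Cref{thm:good cover} to produce the good cover $\pi_{r,\good}:X_{r,\good}\to X$. Applying the inductive hypothesis to $(X_{r,\good},F_{\mid X_{r,\good}})$ at rank $r-1$ yields a prime to $p$ \'etale Galois cover $(X_{r,\good})_{r-1,\split}\to X_{r,\good}$; let $Z\to X$ denote the Galois closure over $X$ of its composition with $\pi_{r,\good}$. I would then define $X_{r,\split}\to X$ as the compositum over $X$ of $Z$ and $X_{r-1,\split}$. Since compositions, compositums and Galois closures of prime to $p$ \'etale Galois covers are again of that type, $X_{r,\split}\to X$ is a prime to $p$ \'etale Galois cover that dominates $X_{r,\good}$, $X_{r-1,\split}$, and $(X_{r,\good})_{r-1,\split}\to X_{r,\good}$.

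To verify the stated property, let $Y\to X$ be a prime to $p$ \'etale Galois cover dominating $X_{r,\split}$ and $\mathcal{V}$ a stable $F$-twisted bundle of rank $r'\le r$ on $X$. If $r'<r$, both $Y$ and $X_{r,\split}$ dominate $X_{r-1,\split}$, so the inductive hypothesis forces the decomposition types of $\mathcal{V}$ with respect to $Y$ and to $X_{r,\split}$ to both equal the one with respect to $X_{r-1,\split}$. If $r'=r$, then by \Cref{thm:good cover} either $\mathcal{V}_{\mid X_{r,\good}}$ is already stable, in which case $\mathcal{V}$ is prime to $p$ stable and has decomposition type $r$ on every prime to $p$ \'etale Galois cover, or $\mathcal{V}_{\mid X_{r,\good}}\cong\bigoplus_{j=1}^n\mathcal{W}_j^{\oplus e}$ with pairwise non-isomorphic stable summands of rank $m<r$ as in \Cref{lemma-key}. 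In the second subcase one has $\mathcal{V}_{\mid Y}\cong\bigoplus_{j=1}^n(\mathcal{W}_j)_{\mid Y}^{\oplus e}$, and lifting the transitive action of $\gal(X_{r,\good}/X)$ on the $\{\mathcal{W}_j\}$ to $\gal(Y/X)$ I would note that pullback by a Galois automorphism of $Y$ over $X$ preserves rank and stability, so all $(\mathcal{W}_j)_{\mid Y}$ share a common decomposition type. Applying the inductive hypothesis to $(X_{r,\good},F_{\mid X_{r,\good}})$ at rank $m$ and using that both $Y$ and $X_{r,\split}$ dominate $(X_{r,\good})_{r-1,\split}$, this common value is the same for $Y$ and for $X_{r,\split}$, yielding the desired equality of decomposition types of $\mathcal{V}$.

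The hard part will be the Galois-equivariance step in the non-stable subcase of $r'=r$: one must lift the transitive $\gal(X_{r,\good}/X)$-action on the pairwise non-isomorphic stable summands $\mathcal{W}_j$ to any Galois cover $Y\to X$ dominating $X_{r,\good}$ and conclude that they have a common decomposition type on $Y$, so that the reduction to the rank-$m$ problem on $X_{r,\good}$ is actually well-posed. The remaining work is essentially bookkeeping, provided the induction is stated universally in the base variety so that it may be applied to $(X_{r,\good},F_{\mid X_{r,\good}})$.
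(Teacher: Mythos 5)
Your proposal is correct and is essentially the paper's argument: the paper's proof simply says ``iterate the good cover until every stable summand of $\mathcal{V}_{\mid X_{r,\split}}$ is prime to $p$ stable,'' and your induction on the rank, with $X_{r,\split}$ built as a compositum of $X_{r,\good}$, $X_{r-1,\split}$ and $(X_{r,\good})_{r-1,\split}$, is just a careful bookkeeping of that iteration. The step you flag as hard is already supplied by \Cref{lemma-key} applied to $\mathcal{V}$ and the Galois cover $Y\to X$: all stable summands of $\mathcal{V}_{\mid Y}$ have the same rank, so no lifting of the Galois action to $Y$ is actually needed.
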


\begin{proof}
This is analogous to \cite[Lemma 4.2]{strata}:
By iterating the good cover we obtain a prime to $p$ \'etale Galois
cover $X_{r,\split}\to X$ such that an $F$-twisted stable bundle 
$\mathcal{V}$ of rank $r$ on $X$
satisfies that $\mathcal{V}_{\mid X_{r,\split}}$
is a direct sum of prime to $p$ stable 
$F_{\mid X_{r,\split}}$-twisted bundles on $X_{r,\split}$.
\end{proof}

\begin{definition}[Decomposition stratification]\label{abs-decomp-strat}
    We call the decomposition stratification of $M^{P,F-s}_X$
    with respect to $X_{r,\split}\to X$ 
    the \emph{prime to $p$ decomposition stratification}
    (or just the \emph{decomposition stratification}).
    We denote the strata for $m\mid r$ by
    \[
        Z^{F-s,P}(m):=Z^{F-s,P}(X_{r,\split}\to X,m).
    \]

    This is justified as the decomposition type
    of a twisted Higgs bundle $\mathcal{V}$ of rank $r$
    with respect to $X_{r,\split}\to X$
    agrees with the decomposition type of $\mathcal{V}$
    with respect to every prime to $p$ Galois cover 
    $Y\to X$ dominating $X_{r,\split}\to X$, see \Cref{splitcover}.

    When $m=1$, we call $Z^{F-s,P}(1)$ the \emph{1-stratum} of $M_X^{P,F-s}$.
\end{definition}    

\begin{definition}[Semistable $1$-stratum]
    Consider the moduli space $M^{P,\FGss}_X$ 
    of Gieseker semistable torsion-free $F$-Higgs sheaves with Hilbert
    polynomial $P$. 
    
    Define the \emph{semistable $1$-stratum} to be the 
    closed subset $Z^{\FGss,P}(1)$ of S-equivalence 
    classes of Gieseker semistable $F$-Higgs bundles in $M^{P,\FGss}_X$ 
    which split as a direct sum of $F_{\mid X_{r,\split}}$-Higgs bundles 
    of rank $1$ on the split cover $X_{r,\split}$.
\end{definition}

\subsection{The image of the semistable 1-stratum under the Hitchin morphism}
Let $X$ be a normal projective variety and let $F$ be a vector bundle on $X$. 
We interpret an element of the Hitchin base
$A_X^{r,F}=\bigoplus_{i=1}^rH^0(X,\mathrm{Sym}^iF)$
as a degree $r$ homogeneous polynomial in a formal variable $y$
as already done in \Cref{direct-sum-finite}.
In other words, we express
$s=(s_1,\dots,s_r)\in A_X^{r,F}$ as $y^r-s_1y^{r-1}+\dots+(-1)^rs_r$.

Multiplication of polynomials then induces maps 
\[
    \prod\nolimits_{p,q}:A_X^{p,F}\times A_X^{q,F}\to A_X^{r,F}
\]
for all $p,q\in\mathbb{Z}_{\ge1}$ such that $p+q=r$.
These maps are finite (\Cref{direct-sum-finite}).

\begin{definition}
Let $(p,q)\subset A_X^{r,F}$ denote the set-theoretic 
image of $\prod_{p,q}$, and define $(r)$
to be the subset $(r):=A_X^{r,F}\setminus\bigcup_{p\le q}(p,q)$. 
We obtain the Hitchin base as the union
\[
A_X^{r,F} = \bigcup_{p\le q}(p,q)\cup(r).
\]

Each of the subsets $(p,q)$ is closed because it is the image of a finite morphism. 
Thus, the subset $(r)$ is open. 
Points of $(p,q)$ are those polynomials of $A_X^{r,F}$
that split as a product of a degree $p$ and a degree $q$ polynomial, 
while points of $(r)$ are the irreducible polynomials.
\end{definition}

A special case is when we consider
the multiplication map 
\begin{align*}
\prod\nolimits_{1,\dots,1}:(A_X^{1,F})^r:=&A_X^{1,F}\times\dots\times A_X^{1,F}\to A_X^{r,F} \\
&{\big (}(y-s_1),\dots,(y-s_r){\big )}\mapsto (y-s_1)\cdots(y-s_r).
\end{align*}

The set-theoretic image is the closed subset of $A_X^{r,F}$ 
consisting of polynomials that split completely,
i.e. as a product of $r$ degree 1 polynomials. 

\begin{definition}
We call the image of the map $\prod_{1,\dots,1}$
the \emph{1-stratum} of the Hitchin base $A_X^{r,F}$ 
and denote it by $(1)^{\oplus r}$.
\end{definition}

Let $\pi:Y\to X$ be an \'etale Galois cover. 
Then pullback induces a %proper 
morphism 
on the Hitchin bases $\pi^*:A_X^{r,F}\to A_Y^{r,F_{\mid Y}}$
which is injective on closed points.

Note that $\pi^*$ preserves each of the closed subsets $(p,q)$, 
and also the 1-stratum $(1)^{\oplus r}$. 
However, it does not necessarily preserve the subset $(r)$.
The pullback of an irreducible degree $r$ polynomial in $A_X^{r,F}$ may
decompose as a product of lower degree polynomials in $A_Y^{r,F_{\mid Y}}$.

\begin{definition}
    Let $Y\to X$ be an \'etale Galois cover.
    We define for $\pi:Y\to X$ the closed subset 
    $(p,q)_{Y\to X}:=(\pi^*)^{-1}(p,q)_Y$, 
    where $(p,q)_Y$ denotes the image of $\prod_{p,q}$
    in $A_Y^{r,F_{\mid Y}}$. 
    In other words, $(p,q)_{Y\to X}$ consists of those polynomials
    in $A_X^{r,F}$ that split as a product of degree $p$ and degree $q$
    polynomials after pullback to $Y$.
    We similarly define $(1)^{\oplus r}_{Y\to X}$.
\end{definition}

\begin{remark}
    The affine scheme $(A^{1,F}_X)^r$ admits a permutation action 
    by the symmetric group $S_r$.
    The Keel-Mori theorem 
    (\cite[\href{https://stacks.math.columbia.edu/tag/0DUT}{Tag 0DUT}]{sp})
    tells us that the quotient stack $[(A^{1,F}_X)^r/S_r]$ admits
    a coarse (i.e. $k$-points correspond to $S_r$-orbits) moduli space, 
    which is the GIT quotient $(A^{1,F}_X)^r\sslash S_r$. 
    
    We see from this that the $k$-points of $(A_X^{1,F})^r\sslash S_r$ can be
    identified with unordered tuples $[\omega_1,\dots,\omega_r]$,
    where $\omega_i\in H^0(X,F)$ for all $1\le i\le r$. 
    The next Lemma shows that each such unordered tuple can in turn be
    identified with a (totally split) degree $r$ polynomial
    $\prod_{i=1}^r(y-\omega_i)$.
\end{remark}

\begin{lemma}\label{uf}
Let $X$ be a normal projective variety and $F$ a vector bundle on $X$.
Let $s\in (1)^{\oplus r}$ be in the 1-stratum of the Hitchin base $A^{r,F}_X$.
Then the decomposition $s=\prod_{i=1}^n(y-s_i)$ 
is unique up to permutation of the $s_i$'s.
Further, the induced morphism
$\prod_{1,...,1}:(A_X^{1,F})^r\sslash S_r \to A_X^{r,F}$ is injective
on closed points.
\end{lemma}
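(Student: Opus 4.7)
The plan is to reduce the uniqueness claim to unique factorization of polynomials at the generic point $\eta = \Spec(\kappa(X))$. Since $F$ is a vector bundle, $F_\eta$ is a finite-dimensional $\kappa(X)$-vector space, so
\[
    R := \bigoplus_{i\geq 0} H^0(\eta, \Sym^i F_\eta) \cong \Sym_{\kappa(X)}(F_\eta)
\]
is a polynomial ring over the field $\kappa(X)$, and hence $R[y]$ is a UFD (this is the observation already invoked in the proof of \Cref{direct-sum-finite}). Moreover, since $X$ is integral and $F$ is torsion-free, the restriction $H^0(X, \Sym^i F) \hookrightarrow R_i$ is injective for every $i \geq 0$, so an element of $A_X^{r,F}$ is faithfully represented by its image in $R[y]$.

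Next, I would prove the uniqueness. Suppose $s \in (1)^{\oplus r}$ admits two decompositions $\prod_{i=1}^r (y - s_i) = \prod_{i=1}^r (y - t_i)$ with $s_i, t_i \in H^0(X, F)$. Each monic linear factor $y - a$ with $a \in R$ is irreducible in $R[y]$: in a factorization $y - a = fg$ the $y$-degree of one of $f, g$ must be zero, and such a factor divides the monic leading coefficient $1$, hence is a unit. Unique factorization in $R[y]$ then yields a permutation $\sigma \in S_r$ and units $u_i \in R^*$ with $y - s_{\sigma(i)} = u_i(y - t_i)$; comparing leading coefficients gives $u_i = 1$, and then comparing constant terms gives $s_{\sigma(i),\eta} = t_{i,\eta}$ in $F_\eta$. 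By the injectivity $H^0(X, F) \hookrightarrow F_\eta$, we conclude $s_{\sigma(i)} = t_i$ as global sections, proving uniqueness up to permutation.

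For the final assertion, a closed point of $(A_X^{1,F})^r \sslash S_r$ is an $S_r$-orbit of an $r$-tuple in $H^0(X, F)^r$, and two such orbits with the same image under $\prod_{1,\ldots,1}$ must coincide by the uniqueness above; this gives injectivity on closed points. There is no real obstacle here: the whole argument rests on the two standard observations that a symmetric algebra on a finite-dimensional vector space over a field is a UFD, and that sections of a torsion-free sheaf on an integral scheme inject into the generic stalk.
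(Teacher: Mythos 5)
Your proof is correct and follows essentially the same route as the paper: pass to the generic point, use that $\bigoplus_i H^0(\eta,\Sym^i F_\eta)$ is a polynomial ring over $\kappa(X)$ (hence a UFD), and conclude via the injectivity of $H^0(X,\Sym^i F)\to H^0(\eta,\Sym^i F_\eta)$. You merely spell out a couple of details the paper leaves implicit (irreducibility of monic linear factors and the handling of units), which is fine.
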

\begin{proof}
    Roughly speaking, the Hitchin base $A^{r,F}_X$ is locally 
    a unique factorization domain (UFD).
    To be precise, after passing to the generic point $\eta=\Spec(\kappa(X))$
    we have that $\bigoplus_{i=1}^{\infty}H^0(\eta,\mathrm{Sym}^iF_{\eta})$ is a UFD
    since it is isomorphic to a polynomial ring over $\kappa(X)$ 
    in $\rk(F)$ many variables.
    Thus, restricting two given decompositions
    $s= \prod_{i=1}^r(y-s_i)=\prod_{i=1}^r(y-s'_i)$
    to the generic point, 
    we have that the $(s_i)\mid _{\eta}$
    are unique up to permutation.
    It follows that the decomposition on $X$ 
    is also unique up to permutation as the localization maps 
    $H^0(X,\Sym^l F)\to H^0(\eta,\Sym^l F|_\eta)$ 
    are injective for all $l$.

    To see the injectivity of
    $\prod_{1,...,1}:(A_X^{1,F})^r\sslash S_r \to A_X^{r,F}$
    consider a $k$-point of the source as an unordered tuple 
    $[(y-\omega_1),\dots,(y-\omega_r)]$, where $\omega_i$
    is a global section in $A_X^{1,F}=H^0(X,F)$
    for all $1\le i\le r$. Its image in the Hitchin base $A_X^{r,F}$ is the polynomial
    $(y-\omega_1)\cdots(y-\omega_r)$ and it follows from the first part that the decomposition
    $\prod_{j=1}^r(y-\omega_j)$ is unique up to permutation of the $\omega_j$'s.
\end{proof}

\begin{lemma}\label{roots-distinct}
Let $Y\to X$ be an \'etale prime to $p$ 
Galois cover of a normal projective variety with Galois group $G$.
Let $F$ be a vector bundle on $X$.
Further, let $s\in(1)^{\oplus r}_{Y\to X}\cap(r)\subset A^{r,F}_X$. 

Then $s_{\mid Y}=\prod_{i=1}^r(y-t_i)$, and $G$ acts
on the set $\{t_1,\dots,t_r\}$ transitively.
Furthermore, the $t_i$ are pairwise distinct.
\end{lemma}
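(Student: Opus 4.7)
The plan is to exploit the unique factorization from \Cref{uf} on $Y$ together with Galois descent, using the prime to $p$ hypothesis to guarantee that $H^0(X,\Sym^iF)=H^0(Y,\Sym^iF_{\mid Y})^G$ via the averaging operator $\tfrac{1}{|G|}\sum_{\sigma\in G}\sigma^{\ast}$, which splits the canonical inclusion $\mathcal{O}_X\hookrightarrow\pi_{\ast}\mathcal{O}_Y$ of the cover $\pi:Y\to X$.

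First I would use $s\in(1)^{\oplus r}_{Y\to X}$ to write $s_{\mid Y}=\prod_{i=1}^r(y-t_i)$ with $t_i\in H^0(Y,F_{\mid Y})$ determined up to permutation by \Cref{uf} applied to $Y$. Since $s$ descends to $X$, the polynomial $s_{\mid Y}$ is $G$-invariant; applying any $\sigma\in G$ to the decomposition and invoking uniqueness shows that $\sigma$ permutes the multiset $\{t_1,\dots,t_r\}$.

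For transitivity I would argue by contradiction: if the $G$-orbits on $\{t_1,\dots,t_r\}$ are $O_1,\dots,O_k$ with $k\ge 2$, then each $q_j:=\prod_{t\in O_j}(y-t)\in A^{|O_j|,F_{\mid Y}}_Y$ is $G$-invariant. Its coefficients, being $G$-invariant sections of $\Sym^iF_{\mid Y}$, descend to sections of $\Sym^iF$ on $X$ by the discussion above, yielding $q_j'\in A^{|O_j|,F}_X$ with $q_j'{}_{\mid Y}=q_j$. Then $s=\prod_j q_j'$ is a nontrivial factorization on $X$ with each $\deg q_j'<r$, contradicting $s\in(r)$.

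For pairwise distinctness the same reasoning applies: if $t_i=t_j$ for some $i\ne j$, then transitivity forces every element of the multiset to appear with the same multiplicity $m\ge 2$, so $s_{\mid Y}=q^m$ where $q$ denotes the product of $(y-t)$ over the distinct values occurring among the $t_i$, which has degree $r/m<r$. Since $G$ permutes those distinct values, $q$ is $G$-invariant and descends to some $q'\in A^{r/m,F}_X$ with $s=(q')^m$, once again contradicting $s\in(r)$. The only step requiring real care is the descent of $G$-invariant polynomials from $Y$ to $X$, which is where the prime to $p$ hypothesis is used essentially; everything else is a formal manipulation of the unique factorization on $Y$.
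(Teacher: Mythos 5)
Your proposal is correct and follows essentially the same route as the paper: use the uniqueness of the factorization on $Y$ (\Cref{uf}) to get a $G$-action on the $t_i$, descend the $G$-invariant elementary symmetric functions of an orbit to a factor of $s$ on $X$, and invoke irreducibility of $s$ to force a single orbit of $r$ distinct roots. One small remark: the descent of $G$-invariant sections along an \'etale Galois cover is ordinary Galois descent and holds in any characteristic, so the prime-to-$p$ hypothesis is not actually needed for that step (the paper does not use it there either); your averaging argument works but is not where the content lies.
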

\begin{proof}
    Note that $G=\mathrm{Aut}(Y/X)$. We consider
    each $\sigma\in G$ as
    an automorphism $\sigma:Y\to Y$ fixing $X$.
    Denote the $G$-linearization of $F_{\mid Y}$ by
    $\varphi^F_{\sigma}:F_{\mid Y}\xrightarrow{\sim}\sigma^{\ast}F_{\mid Y}$
    for $\sigma\in G$. 
    Similarly, denote the natural $G$-linearization of $\mathcal{O}_{Y}$
    by $\varphi^{\mathcal{O}_Y}_{\sigma}$.
    
    Given a global section $t\in H^0(Y,F_{\mid Y})$ we obtain a global section
    $\sigma\cdot t\in H^0(Y,F_{\mid Y})$ defined as the composition
    $\mathcal{O}_Y\xrightarrow{\varphi^{\mathcal{O}_Y}_{\sigma}}
    \sigma^{\ast}\mathcal{O}_Y \xrightarrow{\sigma^{\ast}t}
    \sigma^{\ast}F_{\mid Y}\xrightarrow{(\varphi^F_{\sigma})^{-1}}
    F_{\mid Y}.$
    Similarly, we define $\sigma\cdot t$ for global sections of
    symmetric powers of $F$.
    
    The pullback $s_{\mid Y}$ of $s\in (1)^{\oplus r}_{Y\to X}\cap (r)$
    defines a $G$-equivariant global section
    $\mathcal{O}_Y\xrightarrow{s_{\mid Y}}
    \bigoplus_{i=1}^r \Sym^i F_{\mid Y}$,
    that is, $s_{\mid Y}$
    is compatible with the $G$-linearizations on source and target.
    We can phrase this as $\sigma\cdot s_{\mid Y}=s_{\mid Y}$ in the
    above notation.

    Observe that 
    $\sigma\cdot s_{\mid Y} 
    = \sigma\cdot \prod_{i=1}^r(y-t_i)=\prod_{i=1}^r (y-\sigma \cdot t_i)$, for some $t_i\in H^0(Y,F_{\mid Y})$, for $1\le i\le r$.
    %\dario{should introduce the $t$'s here?}
    Then, by the uniqueness of the decomposition $s_{\mid Y}=\prod_{i=1}^r (y-t_i)$ by
    \Cref{uf},
    we find that $\sigma \cdot t_i=t_{\sigma(i)}$ for some index $\sigma(i)$.
    This induces an action of $G$ on $\{t_1,\dots,t_r\}$.
    
    Fix an index $l$ and consider the $G$-orbit $G\cdot t_l$ 
    consisting of $m_l$
    distinct elements.
    Let $\{t^1_l,\dots,t^{m_l}_l\}$ be the set of distinct elements of $G\cdot t_l$.
    Then the $j$-th symmetric polynomial in $t^1_l,\dots,t_l^{m_l}$ defines a
    $G$-equivariant global section of $\Sym^j(F_{\mid Y})$.
    Thus, all coefficients of $P=(y-t_l^1)\cdot(y-t_l^2)\cdots(y-t_l^{m_l})$
    descend to $X$. Denote the polynomial on $X$ obtained by $P'$.
    As $P$ divides $s_{\mid Y}$, we find that $P'$ divides $s$ 
    by the injectivity $H^0(X,\Sym^j F)\to H^0(Y,\Sym^j F_{\mid Y})$
    for all $j$.
    
    Since $s\in(r)$ is irreducible, we conclude that $P'=s$. 
    In particular, $m_l=r$ i.e., the action of $G$ on $\{t_1,\dots,t_r\}$ 
    is transitive and all $t_i$ are pairwise distinct by definition of $m_l$.
\end{proof}

\begin{lemma}\label{H-split}
    Let $X$ be a normal projective variety.  Let $r\geq 2$ be an integer and let $F$ be a vector bundle on $X$.
    Then there exists an \'etale prime to $p$ Galois cover 
    $X_{r,\Hsplit}\to X$
    such that for every \'etale prime to $p$ Galois cover $Y\to X$ 
    dominating $X_{r,\Hsplit}\to X$,
    we have that 
    $(1)^{\oplus r}_{Y\to X}=(1)^{\oplus r}_{X_{r,\Hsplit}\to X}$.
\end{lemma}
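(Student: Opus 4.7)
The plan is to take $X_{r,\Hsplit}\to X$ to be an \'etale prime-to-$p$ Galois cover of $X$ dominating every \'etale prime-to-$p$ Galois cover of degree at most $r!$. Such a cover exists because \Cref{cor: quasi-etale of fixed degree} yields only finitely many \'etale covers of any fixed degree, so one may take the Galois closure of the compositum of the finitely many candidates; this remains prime to $p$ since each constituent cover is.

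The inclusion $(1)^{\oplus r}_{X_{r,\Hsplit}\to X}\subseteq(1)^{\oplus r}_{Y\to X}$, for any $Y\to X$ dominating $X_{r,\Hsplit}\to X$, is immediate by pulling back a complete degree-$1$ factorization along $Y\to X_{r,\Hsplit}$. For the reverse inclusion I would start with $s\in(1)^{\oplus r}_{Y\to X}$, write $s_{\mid Y}=\prod_{i=1}^r(y-t_i)$ with $t_i\in H^0(Y,F_{\mid Y})$, and set $G:=\gal(Y/X)$. Since $s$ is pulled back from $X$, the polynomial $s_{\mid Y}$ is $G$-invariant in the sense of the proof of \Cref{roots-distinct}, so \Cref{uf} forces the multiset $\{t_1,\dots,t_r\}$ to be $G$-stable. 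Letting $\{t'_1,\dots,t'_m\}$ be the underlying set of distinct values with multiplicities $e_1,\dots,e_m$ (so $m\le r$), the $G$-action on this set gives a homomorphism $\rho:G\to S_m$.

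Let $H:=\ker\rho$, a normal subgroup of $G$ with $[G:H]\le m!\le r!$. Each $t'_j$ is by construction $H$-invariant, hence descends to a section of $F_{\mid Y^H}$ on the intermediate Galois quotient $Y^H\to X$ by \'etale Galois descent of global sections. Consequently $s_{\mid Y^H}=\prod_{j=1}^m(y-t'_j)^{e_j}$ is a complete degree-$1$ factorization, so $s\in(1)^{\oplus r}_{Y^H\to X}$. Since $Y^H\to X$ is an \'etale prime-to-$p$ Galois cover of degree at most $r!$, it is dominated by $X_{r,\Hsplit}$ by construction, and a final pullback places $s$ in $(1)^{\oplus r}_{X_{r,\Hsplit}\to X}$.

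The only substantive point is the descent of the $t'_j$: since $Y\to Y^H$ is finite \'etale Galois with group $H$ and $F_{\mid Y}$ is the pullback of $F_{\mid Y^H}$, the standard identity $(p_*\mathcal{O}_Y)^H=\mathcal{O}_{Y^H}$ yields $H^0(Y,F_{\mid Y})^H=H^0(Y^H,F_{\mid Y^H})$. The small verification needed is that the $G$-action on sections defined via the $G$-linearization in the proof of \Cref{roots-distinct} restricts on $H$ to the Galois action used for descent, which is immediate by tracing definitions. I do not expect any serious obstacle beyond this routine check.
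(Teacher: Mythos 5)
Your proof is correct, but it takes a genuinely different route from the paper's. The paper defines $X_{r,\Hsplit}$ to dominate all \'etale prime-to-$p$ covers of degree at most $r$ (not $r!$): it first reduces to the case of an irreducible $s$, invokes Lemma \ref{roots-distinct} to get that the roots $t_1,\dots,t_r$ of $s_{\mid Y}$ are pairwise distinct and permuted \emph{transitively} by $G$, and then descends each $t_i$ individually along the degree-$r$ intermediate cover $Y/\mathrm{Stab}(t_i)\to X$, recovering the full factorization of $s_{\mid X_{r,\Hsplit}}$ by a divisibility argument. You instead skip the reduction to irreducible polynomials and the transitivity/distinctness input entirely: you only need that the multiset of roots is $G$-stable (which follows from $G$-invariance of $s_{\mid Y}$ plus the uniqueness of factorization in Lemma \ref{uf}), and you descend all roots simultaneously along $Y/\ker(G\to S_m)\to X$, at the cost of only being able to bound the degree of that cover by $m!\le r!$. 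Both arguments are sound; the trade-off is that your construction is more self-contained and avoids Lemma \ref{roots-distinct}, while the paper's yields a much smaller, more economical cover (degree bound $r$ rather than $r!$) and along the way establishes the transitivity and distinctness facts that are reused in Lemma \ref{lemma:descend invariant sections}. Since the statement does not assert any minimality of $X_{r,\Hsplit}$ and the downstream applications only use the stated property, your larger cover causes no problems. The two descent ingredients you flag as routine --- $H^0(Y,F_{\mid Y})^H=H^0(Y^H,F_{\mid Y^H})$ via $(p_*\mathcal{O}_Y)^H=\mathcal{O}_{Y^H}$, and the compatibility of the linearization action with Galois descent --- are indeed standard and match what the paper uses implicitly.
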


\begin{proof}
    Define $X_{r,\Hsplit}\to X$ to 
    be an \'etale prime to $p$ Galois
    cover dominating all \'etale prime to $p$
    covers of $X$ of degree $\leq r$.

    Let $Y\to X$ an \'etale prime to $p$ Galois cover
    dominating $X_{r,\Hsplit}\to X$.
    The inclusion 
    $(1)^{\oplus r}_{X_{r,\Hsplit}\to X}\subseteq (1)^{\oplus r}_{Y\to X}$
    is immediate.
    To see the converse, consider $s\in (1)^{\oplus r}_{Y\to X}$
    and let $s=\prod_{i=1}^n s_i$ be a decomposition
    of $s$ into irreducible polynomials of degree $r_i$.
    Then we have that $s_i\in (1)^{\oplus r_i}_{Y\to X}\cap (r_i)$
    in the Hitchin base $A^{r_i,F}_X$.
    Since $r_i\leq r$ we are reduced to showing the lemma
    for irreducible $s$.

    Consider the action of the Galois group $G:=\gal(Y/X)$
    on the pairwise distinct factors $t_i$ of 
    $s_{\mid Y}=\prod_{i=1}^r (y-t_i)$
    obtained from Lemma \ref{roots-distinct}.
    Let $H_i$ be the stabilizers of $t_i$ for $i=1,\dots,r$.
    Then the section $t_i$ is the pullback of a section $t'_i$ of $F_{\mid Y_i}$,
    where $Y\to Y_i:=Y/H_i\to X$ is the intermediate \'etale cover
    associated to $H_i\subset G$.
    The same holds for the elementary symmetric polynomials in
    the $t_j$ for $j\neq i$.
    We conclude that $(y-t'_i)$ is a factor of $s_{\mid Y_i}$. 

    Note that $Y_i\to X$ is an \'etale prime to $p$ cover
    of degree $\leq r$ for each $i$.
    Thus, the cover $X_{r,\Hsplit}\to X$ dominates 
    $Y_i\to X$ for all $1\le i\le r$.
    It follows that $s_{\mid X_{r,\Hsplit}}$ is
    divisible by $(y-(t'_i)_{\mid X_{r,\Hsplit}})$ for all $1\le i\le r$.
    %\dario{get rid of the $e$ as above?}
    Since the $t_i$ are pairwise distinct and
    $(t'_i)_{\mid X_{r,\Hsplit}}$ pulls back to $t_i$ on $Y$,
    the $(t'_i)_{\mid X_{r,\Hsplit}}$ 
    are pairwise distinct as well.
    Thus, $\prod_{i=1}^r (y-(t'_i)_{\mid X_{r,\Hsplit}})$ is
    a monic polynomial of degree $r$ dividing $s_{\mid X_{r,\Hsplit}}$. 
    Since $s_{\mid X_{r,\Hsplit}}$ is a pullback of $s$, 
    it is also a monic polynomial of degree $r$ and thus 
    $s_{\mid X_{r,\Hsplit}} = \prod_{i=1}^r  (y-(t'_i)_{\mid X_{r,\Hsplit}})$ i.e., 
    $s\in(1)^{\oplus r}_{X_{r,\Hsplit}\to X}$.
\end{proof}

\begin{lemma}
\label{lemma:descend invariant sections}
Let $\pi:Y\to X$ be an \'etale Galois cover of a normal projective variety with
Galois group $G$. Let $F$ be a vector bundle on $X$ and $r\geq 2$.
Let $s\in (r)\cap (1)^{\oplus r}_{Y\to X}$
and $s_{\mid Y}=\prod_{i=1}^r (y-t_i)$ for some $t_i\in H^0(Y,F_{\mid Y})$.

Then $s=h_X(\pi'_{\ast}t')$ for some intermediate \'etale cover $Y\to Y'\xrightarrow{\pi'}X$
of degree $r$ and $t'\in H^0(Y',F_{\mid Y'})$ such that $t'_{\mid Y}=t_i$ for some $i$,
where $\pi'_{\ast}t'$ is considered as an $F$-twisted Higgs bundle with underlying
vector bundle $\pi'_{\ast}\mathcal{O}_{Y'}$.
Furthermore, $\pi'_{\ast}t'$ is a stable $F$-twisted Higgs bundle.

\end{lemma}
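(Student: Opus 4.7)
The plan is to realize $Y'$ as a quotient of $Y$ by a stabilizer subgroup. By \Cref{roots-distinct}, $G := \gal(Y/X)$ acts transitively on the pairwise distinct sections $\{t_1, \ldots, t_r\}$. Fix $t_1$, set $H := \mathrm{Stab}_G(t_1)$ so that $[G:H] = r$, and define $Y' := Y/H$. This yields an intermediate factorization $Y \to Y' \xrightarrow{\pi'} X$ with $\pi'$ \'etale of degree $r$ and $Y \to Y'$ \'etale Galois with group $H$. Since $t_1$ is $H$-invariant, \'etale descent produces a unique $t' \in H^0(Y', F_{\mid Y'})$ with $t'_{\mid Y} = t_1$. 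Via the projection formula $\pi'_\ast(F_{\mid Y'}) \cong \pi'_\ast\mathcal{O}_{Y'} \otimes F$, the pushforward $\pi'_\ast t'$ becomes an $F$-twisted endomorphism of the rank-$r$ bundle $\pi'_\ast\mathcal{O}_{Y'}$, and the Higgs condition is automatic since $t' \wedge t' = 0$.

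Next I would compute $h_X(\pi'_\ast t')$ via flat base change along $\pi$. The Galois identification $Y \times_X Y \cong \bigsqcup_{\sigma \in G} Y$ descends modulo the $H$-action on the second factor to $Y \times_X Y' \cong \bigsqcup_{\sigma \in G/H} Y$, so $(\pi'_\ast\mathcal{O}_{Y'})_{\mid Y} \cong \mathcal{O}_Y^{\oplus r}$ with summands indexed by $G/H \xrightarrow{\sim} G \cdot t_1 = \{t_1, \ldots, t_r\}$. Under this identification, $\pi^{\ast}(\pi'_\ast t')$ becomes the diagonal field $\mathrm{diag}(t_1, \ldots, t_r)$, whose characteristic polynomial on $Y$ is $\prod_{i=1}^r(y - t_i) = s_{\mid Y}$. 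Injectivity of the pullback $H^0(X, \Sym^j F) \hookrightarrow H^0(Y, \Sym^j F_{\mid Y})$ then yields $h_X(\pi'_\ast t') = s$.

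For stability I would argue by contradiction: suppose $\mathcal{W} \subsetneq \pi'_\ast\mathcal{O}_{Y'}$ is a non-zero saturated $F$-twisted Higgs subsheaf. Its pullback $\mathcal{W}_{\mid Y} \subset (\mathcal{O}_Y^{\oplus r}, \mathrm{diag}(t_1, \ldots, t_r))$ is a $G$-invariant Higgs subsheaf. At the generic point $\eta_Y$, any subspace of $\kappa(Y)^r$ invariant under the diagonal $F_{\eta_Y}$-valued field must be a direct sum of coordinate lines: expanding the $t_i$ in a $\kappa(Y)$-basis of $F_{\eta_Y}$ gives a commutative algebra of diagonal matrices that separates the standard basis, because the $t_i$ are pairwise distinct. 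Since $G$ permutes these coordinate lines transitively via its action on $\{t_1, \ldots, t_r\}$, the only $G$-invariant such subspaces are $0$ and $\kappa(Y)^r$. By saturation this forces $\mathcal{W} = 0$ or $\mathcal{W} = \pi'_\ast\mathcal{O}_{Y'}$, a contradiction.

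The main obstacle will be the careful base-change bookkeeping needed to identify $(\pi'_\ast\mathcal{O}_{Y'})_{\mid Y}$ with $\mathcal{O}_Y^{\oplus r}$ and the pulled-back Higgs field with $\mathrm{diag}(t_1, \ldots, t_r)$, together with the compatibility of the $G$-linearization with the permutation of coordinates. Once this identification is in place, both the Hitchin-image and stability claims follow directly from the transitivity and pairwise distinctness provided by \Cref{roots-distinct}.
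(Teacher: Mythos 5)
Your proposal is correct. For the construction of $Y'$, the descent of $t_1$, and the computation of the characteristic polynomial it follows the paper's proof essentially verbatim: take $H=\mathrm{Stab}_G(t_1)$, set $Y'=Y/H$, descend $t_1$ to $t'$, identify $\pi^{\ast}\pi'_{\ast}\mathcal{O}_{Y'}$ with $\bigoplus_{G/H}\mathcal{O}_Y$ carrying the diagonal Higgs field $\mathrm{diag}(t_1,\dots,t_r)$, and conclude $h_X(\pi'_{\ast}t')=s$ from the injectivity of $H^0(X,\Sym^jF)\to H^0(Y,\Sym^jF_{\mid Y})$ (the paper phrases the diagonal identification via adjunction squares and $S$-equivalence, but the content is the same). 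Where you genuinely diverge is the stability claim: the paper disposes of it in one line by noting that $\pi'_{\ast}\mathcal{O}_{Y'}$ is \'etale trivializable, hence semistable of slope $0$, so any destabilizing Higgs subsheaf would have slope $0$ and its characteristic polynomial would give a proper monic factor of the irreducible $s$. Your argument instead shows the stronger statement that $(\pi'_{\ast}\mathcal{O}_{Y'},\pi'_{\ast}t')$ has no proper nonzero saturated Higgs subsheaf at all, by observing that at the generic point the $\theta$-invariant subspaces of the diagonal field are exactly sums of coordinate lines (the joint eigenspaces are lines precisely because the $t_i$ are pairwise distinct), and that $G$ permutes these lines transitively. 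This buys a self-contained, slope-free argument at the cost of a slightly longer generic-fiber computation; both routes ultimately rest on the transitivity and distinctness supplied by \Cref{roots-distinct}, and both are valid.
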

\begin{proof}
    Let $H$ be the stabilizer of a fixed $t_i$ under the action of $G$ on
    $\{t_1,\dots,t_r\}$. Then the cover $Y\to X$ factors as $Y\to Y':=Y/H\xrightarrow{\pi'} X$.
    
    The section $t_i\in H^0(Y,F_{\mid Y})$ is $H$-equivariant
    by definition
    of $H$ and thus descends to a section $t'\in H^0(Y',F_{\mid Y'})$. 
    Consider via pushforward the $F$-twisted Higgs bundle
    $\pi'_{\ast}(t'):\pi'_{\ast}\mathcal{O}_{Y'}\to \pi'_{\ast}F_{\mid Y'}\cong
    \pi'_{\ast}\mathcal{O}_{Y'}\otimes F$, where
    the isomorphism follows from the projection formula.
    We claim that $\pi'_{\ast}(t')$ has characteristic polynomial $s$.

    First observe that $(\pi'_{\ast}\mathcal{O}_{Y'})_{\mid Y}\cong\mathcal{O}_Y^{\oplus r}$
    since $Y\to X$ is Galois and $Y'\to X$ has degree $r$.
    By adjunction we have a commutative diagram
    \[
        \begin{tikzcd}
            \pi'^{\ast}\pi'_{\ast}\mathcal{O}_{Y'}
            \ar[r,"\pi'^{\ast}\pi'_{\ast}(t')"] \ar[d,two heads] & 
            \pi'^{\ast}\pi'_{\ast}\mathcal{O}_{Y'}  \otimes F_{\mid Y'}
            \ar[d, two heads]\\
            \mathcal{O}_{Y'}\ar[r,"t'"] & F_{\mid Y'}
        \end{tikzcd}
    \]
    with surjective vertical arrows which further pulls back to
    \[
        \begin{tikzcd}
            \mathcal{O}_Y^{\oplus r} \ar[r,"\pi^{\ast}\pi'_{\ast}t'"] 
            \ar[d, two heads] &
            \mathcal{O}_Y^{\oplus r} \otimes F_{\mid Y} \ar[d, two heads]\\
            \mathcal{O}_Y \ar[r,"t_i"] & F_{\mid Y}.
        \end{tikzcd}
    \]
    Pulling back along $\sigma\in G$ yields an analogous diagram
    for all $t_1,\dots,t_r$ via the transitive action of $G$ on $\{t_1,\dots,t_r\}$.
    Since all $t_i$ are pairwise distinct we find that 
    the $F_{\mid Y}$-twisted Higgs bundle 
    $(\mathcal{O}_Y^{\oplus r},\pi^{\ast}\pi'_{\ast}t')$
    is $S$-equivalent
    to the $F_{\mid Y}$-twisted Higgs bundle $\bigoplus_{i=1}^r(\mathcal{O}_Y,t_i)$
    which by construction of the $t_i$ has characteristic polynomial $s_{\mid Y}$.
    By the injectivity of $H^0(X,\Sym^j F)\to H^0(Y,\Sym^j F_{\mid Y})$
    we conclude that the characteristic polynomial of $\pi'_{\ast}t'$ is $s$.

    Note that the underlying vector bundle of 
    $\pi'_{\ast}(t')$ is \'etale trivializable
    and in particular semistable.
    Since $s$ is by assumption irreducible we conclude that $\pi'_{\ast}(t')$
    is stable as an $F$-twisted Higgs bundle.
\end{proof}

For a smooth projective variety in characteristic $0$,
the conjectured set-theoretic image of the Hitchin morphism 
is a closed subscheme of the Hitchin base,
called the \emph{spectral base} (see \cite{chenngo} for details). 

In the smooth case, the $1$-stratum for the split cover $X_{r,\Hsplit}\to X$ is always contained in the image of the Hitchin morphism from the moduli space of Gieseker-semistable $F$-Higgs bundles with vanishing Chern classes. 

\begin{theorem}\label{1-stratum image}
    Let $X$ be a smooth projective variety and 
    let $F$ be a vector bundle on $X$.
    Let $P_0$ denote the Hilbert polynomial
    of the trivial vector bundle of rank $r$.
    The set-theoretic image of the semistable $1$-stratum 
    $Z^{\FGss,P_0}(1)\subset M^{P_0,\FGss}_X$
    via the Hitchin morphism $h^r:M^{P_0,\FGss}_X\to A^{r,F}_X$ 
    is precisely the $1$-stratum for the split cover 
    $(1)^{\oplus r}_{X_{r,\Hsplit}\to X}\subset A^{r,F}_X$.
\end{theorem}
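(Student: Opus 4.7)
The plan is to establish the two set-theoretic inclusions separately, relying on Lemma~\ref{H-split}, Lemma~\ref{lemma:descend invariant sections}, Lemma~\ref{uf}, and Lemma~\ref{splitcover}.

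For the inclusion $h^r(Z^{\FGss,P_0}(1)) \subseteq (1)^{\oplus r}_{X_{r,\Hsplit}\to X}$, I would start with $(\mathcal{V},\theta) \in Z^{\FGss,P_0}(1)$. By definition, $(\mathcal{V},\theta)_{\mid X_{r,\split}}$ splits as a direct sum of $r$ rank-one $F_{\mid X_{r,\split}}$-Higgs bundles, so the characteristic polynomial of $\theta_{\mid X_{r,\split}}$ factors as a product of $r$ degree-one polynomials. This places $h^r(\mathcal{V},\theta)$ in $(1)^{\oplus r}_{X_{r,\split}\to X}$. Picking an \'etale Galois cover $Z\to X$ dominating both $X_{r,\split}$ and $X_{r,\Hsplit}$ and applying Lemma~\ref{H-split} then gives $h^r(\mathcal{V},\theta) \in (1)^{\oplus r}_{Z\to X} = (1)^{\oplus r}_{X_{r,\Hsplit}\to X}$.

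For the reverse inclusion, given $s\in (1)^{\oplus r}_{X_{r,\Hsplit}\to X}$, I would first decompose $s = s_1\cdots s_n$ into monic irreducible factors in $A^{r_i,F}_X$. Since $s_{\mid X_{r,\Hsplit}}$ splits completely into degree-one factors, unique factorization at the generic point (as in Lemma~\ref{uf}) forces each $s_i{}_{\mid X_{r,\Hsplit}}$ to split completely as well, so $s_i \in (r_i)\cap (1)^{\oplus r_i}_{X_{r,\Hsplit}\to X}$. Then Lemma~\ref{lemma:descend invariant sections} applied to each $s_i$ yields an intermediate \'etale Galois factor $X_{r,\Hsplit}\to Y_i \xrightarrow{\pi'_i} X$ of degree $r_i$ and a section $t'_i \in H^0(Y_i, F_{\mid Y_i})$ such that $(\pi'_{i,*}\mathcal{O}_{Y_i}, \pi'_{i,*}t'_i)$ is a stable $F$-twisted Higgs bundle with characteristic polynomial $s_i$. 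Forming
\[
    (\mathcal{V},\theta) := \bigoplus_{i=1}^n (\pi'_{i,*}\mathcal{O}_{Y_i}, \pi'_{i,*}t'_i)
\]
produces a Gieseker-semistable $F$-Higgs bundle whose underlying vector bundle is \'etale trivializable (hence has vanishing Chern classes and Hilbert polynomial $P_0$) and whose characteristic polynomial equals $s$.

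It remains to verify $(\mathcal{V},\theta)\in Z^{\FGss,P_0}(1)$. I would choose an \'etale Galois cover $Z\to X$ dominating every $Y_i$ and $X_{r,\split}$. On $Z$ each $\pi'_{i,*}\mathcal{O}_{Y_i}$ pulls back to a trivial bundle of rank $r_i$, so each stable summand $(\pi'_{i,*}\mathcal{O}_{Y_i}, \pi'_{i,*}t'_i)$ splits over $Z$ into $r_i$ rank-one $F_{\mid Z}$-Higgs bundles. By Lemma~\ref{splitcover}, the decomposition type of each stable summand with respect to $Z$ equals its decomposition type with respect to $X_{r,\split}$, so each summand already splits as $r_i$ rank-one pieces on $X_{r,\split}$, and therefore so does $(\mathcal{V},\theta)_{\mid X_{r,\split}}$. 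The main obstacle is this final transfer: one needs the stability of the decomposition type at $X_{r,\split}$ to move the splitting observed on the larger cover $Z$ back to the split cover itself, which is exactly the content of Lemma~\ref{splitcover}.
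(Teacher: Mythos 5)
Your proof is correct and takes essentially the same route as the paper: reduce to irreducible $s$ by factoring, then apply Lemma~\ref{lemma:descend invariant sections} to produce a stable, \'etale-trivializable $F$-Higgs bundle with characteristic polynomial $s$ and take direct sums. You additionally make explicit the easy forward inclusion and the verification via Lemma~\ref{splitcover} that the constructed bundle genuinely lies in $Z^{\FGss,P_0}(1)$ (i.e.\ splits into rank-one pieces already on $X_{r,\split}$ rather than only on a larger cover), both of which the paper's proof leaves implicit.
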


 \begin{proof}
    Consider a closed point $s\in(1)^{\oplus r}_{X_{r,\Hsplit}\to X}$. 
    By induction, it suffices to treat the case that $s$ is irreducible, 
    i.e., that $s\in (1)^{\oplus r}_{X_{r,\Hsplit}\to X}\cap(r)$.

    Then it follows from Lemma \ref{lemma:descend invariant sections} 
    that there is a stable $F$-Higgs bundle $(\mathcal{V},\theta)$
    of rank $r$
    whose underlying vector bundle $\mathcal{V}$ is \'etale-trivializable, 
    and $s$ is the characteristic polynomial of $(\mathcal{V},\theta)$. 
    In particular, the Chern classes of $\mathcal{V}$ vanish, 
    so it lies in $Z^{\FGss,P_0}(1)\subset M^{P_0,\FGss}_X$.
\end{proof}

\begin{cor}\label{Fettriv}
    Let $X$ be a smooth projective variety and let $F$ be an \'etale trivializable vector bundle of rank $n$ on $X$. 
    Then the set-theoretic
    image of the Hitchin morphism 
    $h^r:M^{P_0,\FGss}_X\to A^{r,F}_X$ 
    coincides with the $1$-stratum $(1)^{\oplus r}_{X_{r,\Hsplit}\to X}$. 
\end{cor}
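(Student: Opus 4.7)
My plan is to show both inclusions separately. The containment $(1)^{\oplus r}_{X_{r,\Hsplit}\to X}\subseteq \mathrm{im}(h^r)$ is immediate from \Cref{1-stratum image}: since $Z^{\FGss,P_0}(1)$ sits inside $M^{P_0,\FGss}_X$ and already surjects onto $(1)^{\oplus r}_{X_{r,\Hsplit}\to X}$ under $h^r$, the full moduli space also covers it.

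For the reverse inclusion, given $s=h^r(\mathcal{V},\theta)$ for some $(\mathcal{V},\theta)\in M^{P_0,\FGss}_X$, I plan to pass to an \'etale prime to $p$ Galois cover $\pi\colon Y\to X$ that simultaneously trivializes $F$ (possible by hypothesis) and dominates $X_{r,\Hsplit}\to X$, constructed as the Galois closure of a common refinement. A fixed trivialization $F|_Y\cong\mathcal{O}_Y^{\oplus n}$ with basis $x_1,\dots,x_n$ identifies $\theta|_Y$ with an $n$-tuple $(\theta_1,\dots,\theta_n)$ of pairwise commuting global endomorphisms of $\mathcal{V}|_Y$. After replacing $(\mathcal{V},\theta)$ by a Gieseker polystable representative of its S-equivalence class (which does not change $s$), I would pull back and take a Jordan-H\"older filtration of $(\mathcal{V}|_Y,\theta|_Y)$ as an $\mathcal{O}_Y^{\oplus n}$-Higgs sheaf.

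The heart of the argument is a scalarity observation: for any stable $\mathcal{O}_Y^{\oplus n}$-twisted Higgs sheaf $(\mathcal{W},\phi)$ on $Y$, each component $\phi_j$ of $\phi=(\phi_1,\dots,\phi_n)$ is of the form $\lambda_j\cdot\mathrm{id}_{\mathcal{W}}$ with $\lambda_j\in k$. Once $F|_Y$ is trivialized, the Higgs condition $\phi\wedge\phi=0$ translates into the commutation relations $\phi_i\phi_j=\phi_j\phi_i$, so each $\phi_j$ is itself a Higgs endomorphism of $(\mathcal{W},\phi)$; the Higgs analogue of \Cref{lemma-stable-simple} then forces $\phi_j\in k\cdot\mathrm{id}_{\mathcal{W}}$. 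Granting this, each stable factor $(\mathcal{W}^{(i)},\phi^{(i)})$ of rank $r_i$ has characteristic polynomial $(y-\sum_j\lambda_j^{(i)}x_j)^{r_i}$, and multiplying over $i$ gives
\[
    s|_Y=\prod_i\Bigl(y-\sum_{j=1}^n\lambda_j^{(i)}x_j\Bigr)^{r_i}\in (1)^{\oplus r}_Y.
\]
Hence $s\in(1)^{\oplus r}_{Y\to X}$, and \Cref{H-split} yields $s\in (1)^{\oplus r}_{X_{r,\Hsplit}\to X}$.

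The main obstacle I anticipate is setting up the cover $Y$ with all required properties simultaneously (\'etale, prime to $p$, Galois, trivializing $F$, and dominating $X_{r,\Hsplit}$) and confirming that Gieseker semistability descends cleanly under $\pi^*$ so that the Jordan-H\"older decomposition on $Y$ is available; in characteristic zero this is transparent, while positive characteristic requires the trivializing cover of $F$ to be chosen with care.
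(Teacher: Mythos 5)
Your proposal is correct, and the first inclusion and the final appeal to \Cref{H-split} match the paper exactly; the difference lies in how you show $s_{\mid Y}$ splits completely once $F$ is trivialized on $Y$. The paper argues fibrewise: the coefficients of $s_{\mid Y}$ lie in $H^0(Y,\mathrm{Sym}^j\mathcal{O}_Y^{\oplus n})$ and are therefore constant, so the joint eigenvalues of the commuting matrices $\theta_1,\dots,\theta_n$ are locally constant and hence constant on the connected $Y$, giving the factorization $s_{\mid Y}=\prod_i(y-t_i)$ directly. You instead filter $(\mathcal{V}_{\mid Y},\theta_{\mid Y})$ by stable twisted Higgs sheaves and use that each $\theta_j$ commutes with $\theta$, hence is a twisted-Higgs endomorphism of each stable graded piece and so a scalar by simplicity; multiplicativity of the characteristic polynomial over the filtration then yields the same splitting. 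Your route is more sheaf-theoretic and arguably more robust than the pointwise eigenvalue argument, at the cost of invoking the Gieseker-stable analogue of \Cref{lemma-stable-simple} (standard, but only the slope-stable reflexive case is stated in the paper) and the multiplicativity of $h^r$ over filtrations. Note also that your worry about semistability of the pullback is dispensable: you do not need $(\mathcal{V}_{\mid Y},\theta_{\mid Y})$ to be semistable, since any torsion-free twisted Higgs sheaf admits a filtration with Gieseker-stable quotients (Harder--Narasimhan refined by Jordan--H\"older), and that is all the scalarity argument requires. The remaining concern you raise --- arranging the trivializing cover to be prime to $p$ and Galois so that \Cref{H-split} applies --- is a genuine subtlety in positive characteristic, but it is present in the paper's own proof in exactly the same form and is vacuous in characteristic $0$.
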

\begin{proof}
    By Theorem \ref{1-stratum image} we know that 
    $(1)^{\oplus r}_{X_{r,\Hsplit}\to X}$ is contained in the image of $h^r$.
    Thus, it suffices to show that $h^r$ factors through 
    $(1)^{\oplus r}_{X_{r,\Hsplit}\to X}$.

    Let $(\mathcal{V},\theta)\in M^{P_0,\FGss}_X$ be 
    a closed point and let
    $s:=h^r(\mathcal{V},\theta)\in A^{r,F}_X$ be the characteristic polynomial
    of $\theta$. Let $Y\to X$ be an \'etale cover dominating $X_{r,\Hsplit}\to
    X$ that trivializes $F$, i.e., $F_{\mid Y}\cong\mathcal{O}_Y^{\oplus n}$.
    Then $s_{\mid Y}\in A^{r,O_Y^{\oplus n}}_Y$ is the characteristic polynomial
    of $(\mathcal{V}_{\mid Y},\theta_{\mid Y})\in M^{P_0,\mathcal{O}_Y^{\oplus
    n}-\mathrm{G-ss}}_Y$.

    We want to show that $s_{\mid Y}\in(1)^{\oplus r}\subset A^{r,O_Y^{\oplus
    n}}_Y$. Note that the global sections of the bundle
    $\mathrm{Sym}^j\mathcal{O}_Y^{\oplus n}$ are constant for all $j\ge1$. The
    Higgs field $\theta_{\mid Y}$ is locally an 
    $r\times r$ matrix whose entries
    are local sections of $\mathcal{O}^{\oplus n}_Y$. The characteristic
    polynomial of $\theta$ is $s_{\mid Y}$ and locally, the roots of $s_{\mid
    Y}$ are precisely the eigenvalues of $\theta_{\mid Y}$.

    Since the coefficients of $s_{\mid Y}$ are constant, the eigenvalues of
    $\theta_{\mid Y}$ are locally constant, 
    hence constant on all of $Y$ because $Y$ is connected. 
    Thus, we have a splitting $s_{\mid Y}=\prod_{i=1}^r(y-t_i)$, 
    where $t_i\in H^0(Y,\mathcal{O}_Y^{\oplus n})$ for $1\le i\le r$. 
    It follows that $s_{\mid Y}\in(1)^{\oplus r}$ and thus 
    $s\in(1)^{\oplus r}_{Y\to X}$. 
    Since $Y\to X$ dominates $X_{r,\Hsplit}\to X$, 
    we have $s\in(1)^{\oplus r}_{X_{r,\Hsplit}\to X}$ by Lemma \ref{H-split}.
\end{proof}

As an application, we can determine the image of the Hitchin morphism from the \emph{Dolbeault moduli space} of Gieseker semistable $\Omega^1_X$-Higgs bundles with vanishing Chern classes, when $X$ is a hyperelliptic variety, that is, a
smooth projective quotient of an abelian variety.

\begin{cor}\label{hyperelliptic}
    Let $X$ be a smooth projective variety of dimension $d$ such that there is an \'etale Galois cover $Y\to X$, where $Y$ is an abelian variety. Then the
    set-theoretic
    image of $h^r:M^{P_0,\Omega^1_X-\mathrm{G-ss}}_X\to A^{r,\Omega^1_X}_X$ 
    coincides with the subset 
    $(H^0(Y,\Omega^1_Y)^r\sslash S_r)^G$
    of $A^{r,\Omega^1_Y}_Y$, where $G:=\gal(Y/X)$.

    In particular, if $X$ is an abelian variety, the set-theoretic image of $h^r$ is $(\mathbb{A}^d)^r\sslash S_r$.
\end{cor}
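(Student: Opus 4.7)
The plan is to invoke \Cref{Fettriv} for the \'etale trivializable bundle $\Omega^1_X$ and then translate the conclusion via Galois descent. First, since $Y$ is abelian we have $\Omega^1_Y\cong\mathcal{O}_Y^{\oplus d}$, so $\pi^{\ast}\Omega^1_X\cong\Omega^1_Y$ is trivial and hence $\Omega^1_X$ is \'etale trivializable via $\pi$. By \Cref{Fettriv}, the set-theoretic image of $h^r$ coincides with $(1)^{\oplus r}_{X_{r,\Hsplit}\to X}\subseteq A^{r,\Omega^1_X}_X$.

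Next, I would replace $Y$ by the Galois closure over $X$ of a connected component of $Y\times_X X_{r,\Hsplit}$; this remains \'etale over the abelian $Y$ and so is itself an abelian variety, and it now dominates $X_{r,\Hsplit}$. A short check shows that the set $(H^0(Y,\Omega^1_Y)^r\sslash S_r)^G$ is invariant under such a replacement: for an intermediate \'etale Galois cover $Y'\to Y$ with $N:=\gal(Y'/Y)\trianglelefteq G':=\gal(Y'/X)$, \'etale descent gives $H^0(Y,\Omega^1_Y)\cong H^0(Y',\Omega^1_{Y'})^N$, and taking $N$-invariants commutes with the $S_r$-quotient since $N$ acts diagonally. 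Hence $(H^0(Y,\Omega^1_Y)^r\sslash S_r)^G\cong(H^0(Y',\Omega^1_{Y'})^r\sslash S_r)^{G'}$. With $Y$ now dominating $X_{r,\Hsplit}$, \Cref{H-split} yields $(1)^{\oplus r}_{X_{r,\Hsplit}\to X}=(1)^{\oplus r}_{Y\to X}$.

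It remains to identify $(1)^{\oplus r}_{Y\to X}\subseteq A^{r,\Omega^1_X}_X$ with $(H^0(Y,\Omega^1_Y)^r\sslash S_r)^G\subseteq A^{r,\Omega^1_Y}_Y$. Galois descent along $\pi$ identifies $A^{r,\Omega^1_X}_X$ with the $G$-fixed subspace of $A^{r,\Omega^1_Y}_Y$, and under this identification $(1)^{\oplus r}_{Y\to X}$ corresponds to $(1)^{\oplus r}\cap(A^{r,\Omega^1_Y}_Y)^G$. By \Cref{uf}, the map $\prod_{1,\ldots,1}$ induces a $G$-equivariant bijection on closed points between $H^0(Y,\Omega^1_Y)^r\sslash S_r$ and $(1)^{\oplus r}\subseteq A^{r,\Omega^1_Y}_Y$, so passing to $G$-fixed points yields the desired identification. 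For the special case of an abelian $X$, take $Y=X$ and $G$ trivial; then $H^0(X,\Omega^1_X)\cong k^d\cong\mathbb{A}^d$, giving $(\mathbb{A}^d)^r\sslash S_r$.

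The main technical subtlety is the prime-to-$p$ hypothesis in \Cref{H-split}: in characteristic zero it is vacuous, but in positive characteristic one would instead argue directly via \Cref{lemma:descend invariant sections} applied to each $G$-orbit of roots in a decomposition $s_{\mid Y}=\prod_{i=1}^r(y-t_i)$, exhibiting a semistable \'etale-trivializable Higgs bundle on $X$ with characteristic polynomial $s\in(1)^{\oplus r}_{Y\to X}$ to obtain the reverse inclusion without invoking \Cref{H-split}.
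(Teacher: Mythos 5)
Your overall architecture is the same as the paper's: trivialize $\Omega^1_X$ on the abelian cover, invoke \Cref{Fettriv}, and then identify the relevant $1$-stratum of the Hitchin base with $(H^0(Y,\Omega^1_Y)^r\sslash S_r)^G$ via unique factorization (\Cref{uf}) and Galois descent of the coefficients. The paper, however, never enlarges $Y$: it observes that since $\Omega^1_X$ already trivializes on the \emph{given} $Y$, the constant-eigenvalue argument from the proof of \Cref{Fettriv} gives $\im(h^r)\subseteq(1)^{\oplus r}_{Y\to X}$, while \Cref{lemma:descend invariant sections} gives the reverse inclusion; so $\im(h^r)=(1)^{\oplus r}_{Y\to X}$ directly and no comparison between covers is needed.

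The one step I would push back on is your replacement of $Y$ by a larger abelian cover $Y'$ dominating $X_{r,\Hsplit}$, justified by the claim that ``taking $N$-invariants commutes with the $S_r$-quotient since $N$ acts diagonally.'' That general statement is false: for a group $N$ acting on a vector space $V$, one only has $(V^N)^r\sslash S_r\subseteq (V^r\sslash S_r)^N$, and the inclusion is strict in general (an $N$-fixed unordered tuple may have $N$ permuting its entries nontrivially; e.g.\ $V=k^2$ with $N=\mathbb{Z}/2$ swapping coordinates and the pair $[(1,0),(0,1)]$). Your intermediate claim is nevertheless true here, but for a different reason: by Serre--Lang, $Y'\to Y$ is (after a choice of origin) an isogeny of abelian varieties, so $N=\gal(Y'/Y)$ acts by translations, which act trivially on invariant differentials; hence $H^0(Y,\Omega^1_Y)\to H^0(Y',\Omega^1_{Y'})$ is an isomorphism and the $G'$-action factors through $G$, which does give $(H^0(Y',\Omega^1_{Y'})^r\sslash S_r)^{G'}=(H^0(Y,\Omega^1_Y)^r\sslash S_r)^{G}$. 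Either supply that argument or, more economically, drop the detour through $X_{r,\Hsplit}$ altogether and argue as the paper does with the given $Y$. Your closing remark about positive characteristic is apt: arguing via \Cref{lemma:descend invariant sections} on each irreducible factor is exactly how the paper sidesteps the prime-to-$p$ hypothesis of \Cref{H-split}.
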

\begin{proof}
    Since $Y\to X$ is \'etale and $Y$ is an abelian variety, it follows that
    we have isomorphisms
    $(\Omega^1_X)_{\mid Y}\cong\Omega^1_Y\cong\mathcal{O}_Y^{\oplus d}$, i.e.,
    $\Omega^1_X$ is an \'etale trivializable vector bundle 
    (see e.g. \cite[Chapter 4(iii)]{mumford}).
    By Corollary \ref{Fettriv},
    the set-theoretic image of $h^r$ is equal to $(1)^{\oplus r}_{Y\to X}$.

    Note that the $G$-action on $H^0(Y,\Omega^1_Y)^r\sslash S_r$ 
    is precisely the $G$-action on the corresponding set of polynomials, i.e., 
    for any closed point $v:=[v_1,\dots,v_r]$ of $H^0(Y,\Omega^1_Y)^r\sslash S_r$ 
    and any $g\in G$, we have 
    \[
        g\cdot v=g\cdot[v_1,\dots, v_r]=g\cdot(\sigma_1(v),\dots\sigma_r(v))=(g\cdot\sigma_1(v),\dots,g\cdot\sigma_r(v)),
    \]
    where $\sigma_i(v)\in H^0(Y,\mathrm{Sym}^i\Omega^1_Y)$ 
    is the $i$-th elementary symmetric polynomial in $[v_1,\dots,v_r]$.

    By definition, $(1)^{\oplus r}_{Y\to X}$ contains all polynomials $s$ such that $s_{\mid Y}\in(1)^{\oplus r}\subset A^{r,\Omega^1_Y}_Y$. Recall that the pullback morphism $A^{r,\Omega^1_X}_X\to A^{r,\Omega^1_Y}_Y$ is injective
    on closed points, and $(1)^{\oplus r}\cong (A^{1,\Omega^1_Y}_Y)^r\sslash S_r\cong H^0(Y,\Omega^1_Y)^r\sslash S_r$. We can thus identify $s$ with $s_{\mid Y}$, and $s_{\mid Y}$ with its set of roots $\{t_i\}_{i=1}^r$, $t_i\in H^0(Y,\Omega^1_Y)$ for all $i$.
    Since $s_{\mid Y}$ is a pullback from $X$, it is $G$-equivariant. Thus, $s_{\mid Y}=[t_1,\dots,t_r]\in(H^0(Y,\Omega^1_Y)^r\sslash S_r)^G$.

    On the other hand, every characteristic polynomial $t$ in
    $A^{r,\Omega^1_Y}_Y$ splits as a product of degree $1$ factors, thus $t$ is
    a closed point of $H^0(Y,\Omega^1_Y)^r\sslash S_r$. 
    Suppose now that $t$ is a closed point of 
    $(H^0(Y,\Omega^1_Y)^r\sslash S_r)^G$,
    then it is $G$-equivariant, in particular,
    its coefficients $t_i$ are $G$-equivariant global sections of 
    $\mathrm{Sym}^i\Omega^1_Y$ and thus descend to global sections 
    $s_i$ of $\mathrm{Sym}^i\Omega^1_X$, for $1\le i\le r$.

    Thus, $t$ descends to a polynomial $s\in(1)^r_{Y\to X}\subset A^{r,\Omega^1_X}_X$,
    whose coefficients are $s_i$, for $1\le i\le r$, i.e., $t=s_{\mid Y}$.
    From Lemma \ref{lemma:descend invariant sections},
    it follows that $s\in\im(h^r)$, and we conclude that 
    $\im(h^r)=(H^0(Y,\Omega^1_Y)^r\sslash S_r)^G$ set-theoretically.
\end{proof}

\subsection{Specializing to classical Higgs bundles}
Let $X$ be a smooth projective variety over $\mathbb{C}$.
Then by Simpson's classical non-abelian Hodge correspondence 
(see \cite{simp,simp2}), there is a
homeomorphism between the Dolbeault moduli space $M^r_{X,\text{Dol}}$
of Gieseker semistable Higgs bundles of rank $r$
with vanishing Chern classes on $X$,
and the \emph{Betti moduli space} 
$M^r_{X,\mathrm{Betti}}$ of conjugacy classes of 
semisimple representations 
$\pi_1(X)\to GL(r,\mathbb{C})$ of the topological fundamental group.

\begin{remark}\label{bettistrat}
    The decomposition stratification on $M^r_{X,\text{Dol}}$ induces a
    decomposition stratification on $M^r_{X,\mathrm{Betti}}$ via the non-abelian
    Hodge correspondence. The decomposition type of an irreducible
    representation $\rho:\pi_1(X)\to GL(r,\mathbb{C})$ is defined analogously to
    that of a stable Higgs bundle, with respect to the split cover
    $X_{r,\split}\to X$. 
\end{remark}

\bibliographystyle{plain}

\bibliography{bibliography}

\end{document}